\documentclass[reqno]{amsart}
\pdfoutput=1

\usepackage{amsmath}
\usepackage{multicol}
\usepackage{mathrsfs}
\usepackage{amssymb}
\usepackage{amsthm}
\usepackage[top=95pt,bottom=80pt,left=85pt,right=85pt,footskip=35pt]{geometry}
\usepackage{xcolor}
\usepackage{enumerate}
\usepackage[all]{xy}
\xyoption{pdf}
\usepackage{tabularx}

\usepackage{makecell}
\usepackage{csquotes}
\usepackage{hyperref}
\usepackage{tikz}
\usetikzlibrary{calc}
\usetikzlibrary{intersections}
\usepackage{ctable}
\setlength{\fboxrule}{0pt}
\usepackage{fancyhdr,floatpag} \pagestyle{fancy} \fancyhf{}

\fancyfoot[C]{\scriptsize\thepage} \floatpagestyle{fancy}

\numberwithin{equation}{section}

\theoremstyle{definition}

\newtheorem*{Thm}{Main Theorem}
\newtheorem*{Thm2}{Theorem~\ref{space-cor}}
\newtheorem*{Cor}{Corollary}
\newtheorem{Prop}{Proposition}[section]
\newtheorem{Theorem}[Prop]{Theorem}
\newtheorem{Lem}[Prop]{Lemma}
\newtheorem{Rmk}[Prop]{Remark}

\newcommand{\PP}{\mathbb{P}}

\newcommand{\QQ}{\mathbb{Q}}
\newcommand{\ZZ}{\mathbb{Z}}
\newcommand{\Oo}{\mathcal{O}}
\newcommand{\Ii}{\mathcal{I}}
\newcommand{\Uu}{\mathcal{U}}
\newcommand{\Kk}{\mathcal{K}}
\newcommand{\Mm}{\mathcal{M}}
\newcommand{\Tt}{\mathcal{T}}
\newcommand{\Cc}{\mathcal{C}}
\newcommand{\Dd}{\mathcal{D}}
\newcommand{\Ee}{\mathcal{E}}
\newcommand{\Gg}{\mathcal{G}}
\newcommand{\Hh}{\mathcal{H}}
\newcommand{\ee}{\overline{e}}
\renewcommand{\ggg}{\sigma}
\newcommand{\elll}{{l}}
\newcommand{\qq}{\overline{q}}
\newcommand{\Pic}{\operatorname{Pic}}

\newcommand{\Bl}{\mathrm{Bl}}
\newcommand{\Aut}{\operatorname{Aut}}
\newcommand{\Sym}{\operatorname{Sym}}

\begin{document}

\title{Bigness of the tangent bundle of a Fano threefold with Picard number two}
\author{Hosung Kim, Jeong-Seop Kim, and Yongnam Lee}

\address {Department of Mathematics\\
Changwon National University\\
20 Changwondaehak-ro, Uichang-gu\\
Changwon-si, Gyeongsangnam-do, 51140 Korea}
\email{hosungkim@changwon.ac.kr}

\address {School of Mathematics\\
Korea Institute for Advanced Study\\
85 Hoegiro, Dongdaemun-gu\\
Seoul, 02455 Korea}
\email{jeongseop@kias.re.kr}

\address {Center for Complex Geometry\\
Institute for Basic Science (IBS)\\
55 Expo-ro, Yuseong-gu\\ 
Daejeon, 34126 Korea, and 
\newline\hspace*{3mm} Department of Mathematical Sciences\\
KAIST\\
291 Daehak-ro, Yuseong-gu\\ 
Daejeon, 34141 Korea}
\email{ynlee@ibs.re.kr}

\thanks{MSC 2010: 14J45, 14J30, 14E30.\\ 
Keywords: Fano threefold, conic bundle, total dual VMRT, tangent bundle}

\begin{abstract}
In this paper, we study the positivity property of the tangent bundle $T_X$ of a Fano threefold $X$ with Picard number 2.
We determine the bigness of the tangent bundle of the whole 36 deformation types.
Our result shows that $T_X$ is big if and only if $(-K_X)^3\ge 34$.
As a corollary, we prove that the tangent bundle is not big when $X$ has a standard conic bundle structure with non-empty discriminant.
Our main methods are to produce irreducible effective divisors on $\PP(T_X)$ constructed from the total dual VMRT associated to a family of rational curves.
Additionally, we present some criteria to determine the bigness of $T_X$.
\end{abstract}

\maketitle

\section{Introduction}

Throughout this paper, we will work over the field of complex numbers.
Let $X$ be a smooth projective variety.
We say that the tangent bundle $T_X$ of $X$ is pseudoeffective (resp. big) if the tautological class $\Oo_{\PP(T_X)}(1)$ of the projectivized bundle $\PP(T_X)$ is pseudoeffective (resp. big). 

In general, it is difficult to give a numerical characterization of pseudoeffectivity or bigness of the tangent bundle, even in low dimension with low rank of the Picard group.
It has been shown by Hsiao \cite[Corollary~1.3]{Hsi} that the tangent bundle of a toric variety is big.
H\"oring, Liu, and Shao \cite[Theorem~1.4]{HLS} give a complete answer to del Pezzo surfaces: If $X$ is a del Pezzo surface of degree $d$,~then
\begin{itemize}
\item[(a)] $T_X$ is pseudoeffective if and only if $d \ge 4$, 
\item[(b)] $T_X$ is big if and only if $d\ge 5$.
\end{itemize}
Also in the paper \cite{HLS}, they solve these problems for del Pezzo threefolds.
In \cite{HL}, H\"oring and Liu consider Fano manifolds $X$ with Picard number one, and they prove that if $X$ admits a rational curve with trivial normal bundle and with big $T_X$, then $X$ is isomorphic to the del Pezzo threefold of degree five.
These all results indicate that assuming bigness should lead to strong restrictions on Fano manifolds, and lead us to consider naturally Fano threefolds with Picard number 2.
In this paper, we prove the following main theorem.
The details are in Table~\ref{the_table} in the introduction.

\begin{Thm}
We determine the bigness of the tangent bundle $T_X$ of whole 36 deformation types of Fano threefolds $X$ with Picard number 2.
In particular, the tangent bundle $T_X$ is big if and only if $(-K_X)^3 \ge \, 34$.
\end{Thm}

We show the main theorem by using the common property of all elements in the deformation family.
Therefore, the property of bigness of $T_X$ does not depend on deformations.

We also note that Fano threefolds of $(-K_X)^3 \ge \, 34$ (No. 26 -- 36 in \cite[Table~2]{MM}) have infinite automorphism groups \cite[Theorem~1.2]{PCS}.
As a corollary,  we give a complete answer for the bigness of the tangent bundle $T_X$ when $X$ has a standard conic bundle structure.

\begin{Cor}
Let $X$ be a Fano threefold with Picard number 2.
We suppose that $X$ has a standard conic bundle structure.
Then $T_X$ is not big if and only if $X$ has a standard conic bundle structure with non-empty discriminant.
\end{Cor}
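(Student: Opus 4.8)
The plan is to deduce the corollary directly from the Main Theorem, which converts it into a purely numerical comparison. By the Main Theorem, for a Fano threefold $X$ with Picard number $2$ the bundle $T_X$ fails to be big precisely when $(-K_X)^3 < 34$. Hence it suffices to show that, among those $X$ carrying a standard conic bundle structure, the discriminant is non-empty if and only if $(-K_X)^3 < 34$. First I would pin down the base: a standard conic bundle $\pi\colon X\to S$ with $\rho(X)=2$ forces $\rho(S)=1$, and since $X$ is Fano it is rationally connected, so the image $S$ is a rationally connected smooth projective surface, i.e. $S\cong\PP^2$. Thus every such $\pi$ is a conic bundle over $\PP^2$ whose discriminant $\Delta\subset\PP^2$ is a plane curve, and because $\operatorname{Br}(\PP^2)=0$, the condition $\Delta=\emptyset$ is equivalent to $\pi$ being a $\PP^1$-bundle $\PP(\Ee)$ for a rank-two bundle $\Ee$.

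For the direction ``empty discriminant $\Rightarrow$ $T_X$ big'', I would invoke the classification of Fano $\PP^1$-bundles over $\PP^2$ with $\rho(X)=2$: up to deformation these are $\PP^1\times\PP^2$, $V_7=\PP(\Oo\oplus\Oo(1))$, $\PP(\Oo\oplus\Oo(2))$, and the flag variety $\PP(T_{\PP^2})$ (numbers $2.34$, $2.35$, $2.36$, $2.32$ in Table~\ref{the_table}). Reading off their anticanonical degrees $54, 62, 62, 48$, each is $\ge 34$, so the Main Theorem yields that $T_X$ is big. It is worth noting that the first three are toric, so their bigness also follows from Hsiao \cite{Hsi}; the flag variety, however, is not toric, and here the Main Theorem is genuinely needed.

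For the converse ``non-empty discriminant $\Rightarrow$ $T_X$ not big'', I would appeal to the same table: every deformation type of Fano threefold with $\rho=2$ admitting a conic bundle over $\PP^2$ with $\Delta\neq\emptyset$ has $(-K_X)^3<34$, so $T_X$ is not big again by the Main Theorem. Combining the two directions with the reduction above produces the stated equivalence under the standing hypothesis that $X$ has a standard conic bundle structure.

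The main obstacle is this last step: one must be certain that no genuine conic bundle, i.e. one with non-empty discriminant, ever reaches anticanonical degree $34$, so that there is a clean numerical gap between the $\PP^1$-bundle degrees (all $\ge 48$) and the degrees of the remaining conic bundles (all $<34$). This is a finite verification carried out against the Mori--Mukai classification recorded in Table~\ref{the_table}, and it is precisely where the detailed computations behind the Main Theorem are being used; identifying exactly which of the $36$ types carry a conic bundle structure over $\PP^2$ rests on the same classification.
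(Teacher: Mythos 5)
Your overall reduction --- use the Main Theorem to turn the corollary into a finite check against the Mori--Mukai table --- is exactly the paper's route, but your execution of the direction ``empty discriminant $\Rightarrow T_X$ big'' contains a genuine gap. The classification you invoke is incomplete: the Fano $\PP^1$-bundles over $\PP^2$ with $\rho(X)=2$ are not only Nos.~32, 34, 35, 36, but also No.~24 (the $(1,2)$-divisor on $\PP^2\times\PP^2$, which the second projection exhibits as $\PP(E)$ for a rank-two bundle $E$ on $\PP^2$), No.~27 (the blow-up of $\PP^3$ along a twisted cubic), and No.~31 (the blow-up of $Q\subseteq\PP^4$ along a line); the latter two are $\PP^1$-bundles over $\PP^2$ by Szurek--Wi\'sniewski \cite{SW}, as recorded in Remark~\ref{divisor_from_projective_space} and Remark~\ref{MM:2-26_and_31}. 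Nos.~27 and 31 happen to be harmless for your argument (degrees $38$ and $46$), but No.~24 has $(-K_X)^3=30<34$, so by the Main Theorem its tangent bundle is \emph{not} big even though it carries a standard conic bundle structure with empty discriminant. Hence the implication you state and use --- ``if some standard conic bundle structure on $X$ has empty discriminant, then $T_X$ is big'' --- is false, and the clean numerical gap you posit (all $\PP^1$-bundle degrees $\geq 48$) does not exist.

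The corollary is nevertheless true because it is phrased existentially: $T_X$ fails to be big if and only if $X$ admits \emph{some} standard conic bundle structure with non-empty discriminant. No.~24 admits one (its first projection is a conic bundle whose discriminant is a plane cubic; see \ref{MM:2-24}), so it sits on the correct side of the equivalence. A repaired argument must therefore distinguish ``$X$ has a structure with empty discriminant'' from ``every standard conic bundle structure on $X$ has empty discriminant'': one checks (i) that the nine types admitting a non-empty-discriminant structure (Nos.~2, 6, 8, 9, 11, 13, 18, 20, 24) all have degree $<34$, hence non-big $T_X$, and (ii) that the types all of whose standard conic bundle structures are $\PP^1$-bundles (Nos.~27, 31, 32, 34--36, with No.~24 excluded precisely because of its second structure) all have degree $\geq 34$, hence big $T_X$. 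Your proposal never makes this distinction, and with the complete classification that is exactly where it breaks. (Minor: the anticanonical degree of $V_7$ is $56$, not $62$, though this does not affect the comparison with $34$.)
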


Our method and explicit description of total dual VMRTs are also applicable to some general cases other than the case of Fano threefolds.
Besides our main theorem, we obtain the following.

\begin{Thm2}
Let $X$ be the blow-up of $\PP^3$ along a smooth nondegenerate curve $\Gamma$.
Assume that $\Gamma$ has at most a finite number of quadrisecant lines on $\PP^3$.
Then $T_X$ is big if and only if $\Gamma$ is a twisted cubic curve. 
\end{Thm2}

A Fano threefold $X$ is called primitive if it is not isomorphic to the blow-up of a Fano threefold along a smooth irreducible curve.
Due to Batyrev's classification of toroidal Fano threefolds \cite{Bat} and Mori-Mukai's classification of Fano threefolds with Picard number 2, No. 32 in \cite[Table~2]{MM} (a divisor on $\PP^2\times\PP^2$ of bidegree $(1, 1)$) is the only non-toric case which has a conic bundle structure with empty discriminant. In this case, $T_X$ is big (see Remark~\ref{MM:2-32}).
We note that if $X$ is a primitive Fano threefold with Picard number 2, then $X$ has a standard conic bundle structure \cite[Theorem~5]{MM}.

A conic bundle is a proper flat morphism $\pi: X\to S$ of nonsingular varieties such that it is of relative dimension 1 and the anticanonical divisor $-K_X$ is relatively ample.
A conic bundle $\pi: X\to S$ is called standard if for any prime divisor $D\subset S$, its inverse image $\pi^*(D)$ is irreducible.
If $X$ is a Fano threefold with Picard number 2 admitting a conic bundle $\pi: X\to S$, then  $S$ is the projective plane $\mathbb P^2$ and $\pi$ is standard.  
Let $\pi: X\to \PP^2$ be a conic bundle structure over $\PP^2$ with the discriminant curve $\Delta\subseteq\PP^2$ of degree $d=\deg \Delta$.
By \cite[\S 1]{Isk} and \cite[Corollary~3.3.3, Corollary~3.9.1]{Prok}, $\Delta$ has only normal crossings in $\PP^2$ and $d\geq 3$.
Also, from \cite[Lemma 3.6]{Prok}, 
\(
b_3(X)=2b_2(\PP^2)+2b_2(X)-2b_2(\PP^2)+2p_a(\Delta)-4=2p_a(\Delta)-2=d^2-3d.
\)

Assume that $X$ is a Fano threefold with Picard number 2 which has a standard conic bundle structure with non-empty discriminant.
Then according to \cite[Table~2]{MM},
\(
3\leq d\leq 8
\,\,\, \text{and}\,\,\,
d\neq 7,
\)
and there are 9 deformation types (No. 2, 6, 8, 9, 11, 13, 18, 20 and 24 in \cite[Table~2]{MM}). 

\medskip

Let $\PP(T_X)$ be the projectivized bundle $\Pi: \PP(T_X)\to X$ of the tangent bundle $T_X$ of $X$.
We will denote by $\zeta=\Oo_{\PP(T_X)}(1)$ the tautological class of $\PP(T_X)$. Here, we use Grothendieck's notion for $\PP(T_X)$.
We note that $-K_{\PP(T_X)}=3\zeta$.
Our main strategy of the proof of the main theorem is to find two irreducible effective divisors $\breve\Cc_1$ and $\breve\Cc_2$ on $\PP(T_X)$ which are the total dual VMRTs associated to families of rational curves, and express a positive multiple of $\zeta$ as a combination of $[\breve\Cc_1]$, $[\breve\Cc_2]$ and $\alpha\cdot(\text{effective divisor})$ with $\alpha\in\ZZ$.

In particular, if $X$ has a standard conic bundle structure $\pi: X\to \PP^2$, we have a natural irreducible effective divisor $\breve\Cc$ on $\PP(T_X)$ induced from the fibers of $\pi: X\to \PP^2$ according to \cite[Corollary~2.13]{HLS}, and $[\breve\Cc]\sim \zeta+\Pi^*(K_X-\pi^*K_{\mathbb P^2}).$ To find irreducible effective divisors $\breve\Cc$ constructed from the total dual VMRT associated to a family of rational curves, we use explicit descriptions of Fano threefolds with Picard number 2 in \cite[Table~2]{MM}.
Especially, we describe irreducible effective divisors $\breve\Cc$ induced from the total dual VMRT associated to a family of rational curves when $X$ are imprimitive Fano threefolds.
Additionally, we provide some criteria to disprove the bigness of $T_X$ when $X$ is the blow-up of a smooth curve on $\PP^3$ or a quadric hypersurface $Q$ in $\PP^4$ or the quintic del Pezzo threefold $V_5$.

\medskip

The organization of the paper is as follows.

In Section 2, we briefly introduce the theory related to the total dual VMRT and present some criteria to disprove the bigness of $T_X$.
Proposition~\ref {three_family} provides a criterion to disprove the bigness of $T_X$ by making use of two rational curves on $X$ not belonging to a given family which associates a total dual VMRT on $\PP(T_X)$.
In Proposition~\ref{fibration} we prove that $T_X$ is not big when $X$ has a del Pezzo surface of degree $d\leq 4$ fibration.

Section 3 presents some criteria to determine the bigness of $T_X$ when $X$ is an imprimitive Fano threefold, i.\,e., $X$ is isomorphic to the blow-up $f: X=\Bl_\Gamma Z\to Z$ of a Fano threefold $Z$ along a smooth curve $\Gamma$.
We first observe a relation between the bigness of $T_X$ and $T_Z$, and then investigate the cases when $Z$ is $\PP^3$ (Remark~\ref{divisor_from_projective_space}) or a quadric hypersurface $Q$ in $\PP^4$ (Proposition~\ref{quadric}) or the quintic del Pezzo threefold $V_5$ (Proposition~\ref{V5}).

Section 4 describes irreducible effective divisors $\breve\Cc$ on $\PP(T_X)$ induced from the total dual VMRT associated to a family of rational curves when $X$ is isomorphic to the blow-up of a smooth curve $\Gamma$ on $\PP^3$ (Proposition~\ref{space} and Proposition~\ref{space2}) or a quadric hypersurface $Q$ in $\PP^4$ (Proposition~\ref{quadric2}).
We consider the family of the secant lines of $\Gamma$ on $\PP^3$, and the family of the lines meeting at one point of $\Gamma$ on $\PP^3$ and $Q$.
Our explicit description of total dual VMRTs in Proposition~\ref{space} and Proposition~\ref{space2} gets Theorem~\ref{space-cor} as a corollary.

In Section 5, we treat mainly Fano threefolds $X$ with Picard number 2 which admit a standard conic bundle structure with non-empty discriminant.

\smallskip

In this present paper, we determine the bigness of the tangent bundle $T_X$ of Fano threefolds $X$ with Picard number $2$.
We also study the pseudoeffectivity of $T_X$ and expect our methods to give some answer on Fano threefolds with higher Picard numbers.
Based on the result of Fano threefolds with Picard number one \cite[Corollary~1.2]{HL} and our result for Fano threefolds with Picard number two, we expect the anti-canonical degree determines the bigness of $T_X$ for Fano threefolds.
Hence we raise bravely the following conjecture.

\smallskip

\noindent{\bf Conjecture.} Let $X$ be a Fano threefold. Then there is a constant $C_0$ depending only on the Picard number of $X$ such that $T_X$ is big if and only if $(-K_X)^3\ge C_0$.

\smallskip

The following table summarizes the bigness of the tangent bundle $T_X$ of Fano threefolds $X$ with Picard number 2.
In the table, the numbers in the first columns correspond to the deformation types in \cite[Table~2]{MM}, and those in the last columns indicate the remark or subsection of each case.
As a by-product of the proof, we can see that the tangent bundle of a general element in the families of No. 21, 23, and 24 is $\QQ$-effective and not big. 

\begin{table} \footnotesize \centering
\begin{tabularx}{\textwidth}{c|c|X|c}
\specialrule{.125em}{0em}{0em}
No.
&
$(-K_X)^3$
&
\centering $X$
&
Big $T_X$ \\
\hline
\begin{tabular}{@{}c@{}}
1, 3, 5,\\ 10, 16, 19
\end{tabular} &
\begin{tabular}{@{}c@{}}
4, 8, 12,\\ 16, 22, 26
\end{tabular} &
blow-ups of a smooth del Pezzo threefold $V_i$ of degree $i$ where $i=1,\,2,\,3,\,4$
&
$\times$ (\ref{blow-ups_of_del_pezzo_manifold})\\
\hline
2 & 
6 &
a double cover of $\PP^2\times\PP^1$ whose branch locus is a divisor of bidegree $(4,2)$
& $\times$ (\ref{MM:2-2})\\
\hline
4 & 
10 &
the blow-up of $\PP^3$ with center an intersection of two cubics
&
$\times$ (\ref{blow-ups_of_projective_space}) \\
\hline
\makecell[ct]{\vspace{-0.8em}\\ 6} & 
\makecell[ct]{\vspace{-0.8em}\\ 12} & 
\makecell[Xt]{
(6.a) a divisor on $\PP^2\times\PP^2$ of bidegree $(2,2)$\\ 
(6.b) a double cover of $W$ whose branch locus is a member of $|-K_W|$
}
&
\makecell[ct]{\vspace{-0.8em}\\$\times$ (\ref{MM:2-6})} \\
\hline
7 & 
14 &
the blow-up of smooth quadric threefold $Q\subseteq\PP^4$ with center an intersection of two members of $|\Oo_Q(2)|$
&
$\times$ (\ref{del_Pezzo_fibration}) \\
\hline
8 &
14 &
a double cover of the blow-up $V_7$ of $\PP^3$ at a point whose branch locus is a member $B$ of $|-K_{V_7}|$ such that
(8.a) $B\cap D$ is smooth
(8.b) $B\cap D$ is reduced but not smooth,
where $D$ is the exceptional divisor of the blow-up $V_7\to\PP^3$
& $\times$ (\ref{MM:2-8})\\
\hline
9 &
16 &
the blow-up of $\PP^3$ with center a curve of degree $7$ and genus $5$ which is an intersection of cubics
&
$\times$ (\ref{MM:2-9})\\
\hline
11 &
18 &
the blow-up of a smooth cubic threefold $V_3\subseteq \PP^4$ with center a line on it
&
$\times$ (\ref{MM:2-11})\\
\hline
12 &
20 &
the blow-up of $\PP^3$ with center a curve of degree $6$ and genus $3$ which is an intersection of cubics
& 
$\times$ (\ref{blow-ups_of_projective_space})\\
\hline
13 &
20 &
the blow-up of a smooth quadric threefold $Q\subseteq\PP^4$ with center a curve of degree $6$ and genus $2$
&
$\times$ (\ref{MM:2-13})\\
\hline
14 &
20 &
the blow-up of a smooth del Pezzo threefold $V_5\subseteq\PP^6$ of degree $5$ with center an elliptic curve which is an intersection of two hyperplane sections
&
$\times$ (\ref{MM:2-14}) \\
\hline
15 &
22 &
the blow-up of $\PP^3$ with center an intersection of a quadric $A$ and a cubic $B$ such that (15.a) $A$ is smooth (15.b) $A$ is reduced but not smooth
&
$\times$ (\ref{blow-ups_of_projective_space}) \\
\hline
17 &
24 &
the blow-up of a smooth quadric threefold $Q\subseteq\PP^4$ with center an elliptic curve of degree $5$ on it
&
$\times$ (\ref{blow-ups_of_projective_space})\\
\hline
18 &
24 &
a double cover of $\PP^2\times \PP^1$ whose branch locus is a divisor of bidegree $(2,2)$
&
$\times$ (\ref{MM:2-18})\\
\hline
20 &
26 &
the blow-up of a smooth del Pezzo threefold $V_5\subseteq\PP^6$ with center a twisted cubic on it
&
$\times$ (\ref{MM:2-20})\\
\hline
21 &
28 &
the blow-up of a smooth quadric threefold $Q\subseteq\PP^4$ with center a twisted quartic, a smooth rational curve of degree $4$ which spans $\PP^4$, on it
&
$\times$ (\ref{blow-ups_of_quadric_threefold})\\
\hline
22 &
30 &
the blow-up of $V_5\subseteq\PP^6$ with center a conic on it
&
$\times$ (\ref{blow-ups_of_projective_space})\\
\hline
23 &
30 &
the blow-up of a smooth quadric threefold $Q\subseteq\PP^4$ with center an intersection of $A\in|\Oo_Q(1)|$ and $B\in|\Oo_Q(2)|$ such that (23.a) $A$ is smooth (23.b) $A$ is not smooth
&
$\times$ (\ref{blow-ups_of_quadric_threefold2}) \\
\hline
24 &
30 &
a divisor on $\PP^2\times \PP^2$ of bidegree $(1,2)$
&
$\times$ (\ref{MM:2-24})\\
\hline
25 &
32 &
the blow-up of $\PP^3$ with center an elliptic curve which is an intersection of two quadrics 
&
$\times$ (\ref{blow-ups_of_projective_space})\\
\hline
26 &
34 &
the blow-up of a smooth del Pezzo threefold $V_5\subseteq\PP^6$ of degree $5$ with center a line on it
&
$\bigcirc$ (\ref{MM:2-26_and_31}) \\
\hline
27 &
38 &
the blow-up of $\PP^3$ with center a twisted cubic
&
$\bigcirc$ (\ref{divisor_from_projective_space}) \\
\hline
28 &
40 &
the blow-up of $\PP^3$ with center a plane cubic
&
$\bigcirc$ (\ref{divisor_from_projective_space}) \\
\hline
29 &
40 &
the blow-up of a smooth quadric threefold $Q\subseteq\PP^4$ with center a conic on it
&
$\bigcirc$ (\ref{blow-ups_of_quadric_threefold2}) \\
\hline
30 &
46 &
the blow-up of $\PP^3$ with center a conic
&
$\bigcirc$ (\ref{divisor_from_projective_space}) \\
\hline
31 &
46 &
the blow-up of a smooth quadric threefold $Q\subseteq \PP^4$ with center a line on it
&
$\bigcirc$ (\ref{MM:2-26_and_31}) \\
\hline
32 &
48 &
a divisor on $\PP^2\times \PP^2$ of bidegree $(1,1)$
&
$\bigcirc$ (\ref{MM:2-32}) \\
\hline
33 -- 36 &
54 -- 62 &
smooth toric varieties
&
$\bigcirc$ \\
\specialrule{.125em}{0em}{0em}
\end{tabularx}

\vspace{1em}
\caption{The bigness of $T_X$ for Fano threefolds $X$ with Picard number 2.}
\label{the_table}
\end{table}

\medskip

\noindent
{\bf Acknowledgements.}
The authors are supported by the Institute for Basic Science IBS-R032-D1.
J.-S. Kim is also supported by the NRF grant funded by the Korea government (MSIT) RS-2024-00349592.
We would like to thank Feng Shao for explaining to us the results of his paper with H\"oring and Liu, and Andreas H\"oring and Jie Liu for some useful comments.
We would also like to thank the referee for the very careful reading and suggestions to improve the paper.

\section{Total Dual VMRTs and some Criteria to Disprove Bigness}

In this section, we briefly introduce the theory related to the total dual VMRT, which is firstly introduced by \cite{HR} in a study of Hecke curves on the moduli of vector bundles on a curve.
Later, \cite{OSW} generalizes the theory to the case of minimal rational curves, and \cite{HLS} develops explicit formulas in the case where a variety has zero-dimensional VMRTs in a study of the tangent bundles of del Pezzo manifolds.

\medskip

Let $X$ be a smooth projective variety. Let $\text{RatCurves}^n(X)$ be the normalized space of rational curves on $X$ (see \cite[Chapter II]{Kol}).
We mean by a \emph{family of rational curves} on $X$ an irreducible component $\Kk$ of $\text{RatCurves}^n(X)$.

We say  a rational curve $\ell$ on $X$ is \emph{unbendable} if its normalization $\nu:\PP^1\to \ell\subseteq X$ satisfies
\[
\nu^*T_X\cong \Oo_{\PP^1}(2)\oplus {\Oo_{\PP^1}(1)}^{\oplus r}\oplus {\Oo_{\PP^1}}^{\oplus s}
\ \ 
\text{for some $r,\,s\geq 0$ with $r+s+1=\dim X$}.
\] An irreducible component  $\Kk$ of  $\text{RatCurves}^n(X)$ is called a \emph{family of unbendable rational curves} on $X$ if its general member $[\ell]\in\Kk$ is unbendable. 

In the rest of this section we consider a family $\Kk$ of unbendable rational curves on $X$. 
Let $q: \Uu\to\Kk$ be the normalization of the universal family and $e:\Uu\to X$ be the evaluation morphism.
\[\xymatrix @R=2pc @C=3pc {
\Uu \ar[r]^{e} \ar[d]_{q} & X \\
\Kk
}\]
It is known that the evaluation morphism $e:\Uu\to X$ is dominant. 
We denote by $\Kk_x$ the normalization of $q(e^{-1}(x))\subseteq\Kk$.
For a general point $x\in X$, there exists a rational map $\tau_x: \Kk_x\dashrightarrow \PP(\Omega_X|_x)$, which is called the \emph{tangent map}, sending a curve which is smooth at $x$ to its tangent direction at $x$, and we define the \emph{variety of the minimal rational tangents} (\emph{VMRT}, for short) $\Cc_x$ of $\Kk$ at $x$ to be the closure of the image of $\tau_x$ in $\PP(\Omega_X|_x)$.

For an unbendable rational curve $\ell$ on $X$, a \emph{minimal section} of $\ell$ is defined by a rational curve $\widetilde{\ell}$ on $\PP(T_X)$ with the normalization 
\[
\widetilde{\nu}: \PP^1\to \widetilde{\ell}\subseteq \PP(T_X|_\ell) \subseteq\PP(T_X)
\]
associated  to a trivial quotient $\nu^*T_X\to\Oo_{\PP^1}$.

Unless $X\cong\PP^n$, a general member $[\ell]\in\Kk$ is unbendable with $s\neq 0$, so there exists a minimal section $\widetilde{\ell}$ on $\PP(T_X)$.
Let $\widetilde{\Kk}$ be a family of rational curves on $\PP(T_X)$ which contains $[\widetilde{\ell}]$.
Then, for the normalized universal family $\widetilde{q}:\widetilde{\Uu}\to\widetilde{\Kk}$ and its evaluation morphism $\widetilde{e}:\widetilde{\Uu}\to \PP(T_X)$, we have the following commutative diagram (see \cite[Section 4]{OSW}).
\begin{equation*}
\xymatrix @R=2pc @C=3pc {
\widetilde{\Kk} \ar@{<-}[r]^{\widetilde{q}} \ar[d]
& \widetilde{\Uu} \ar[r]^{\widetilde{e}}\ar[d]
& \PP(T_X) \ar[d]^\Pi \\
\Kk \ar@{<-}[r]_{q} & \Uu \ar[r]_{e} & X
}
\end{equation*}

We denote by $\breve\Cc$ the closure of the image $\widetilde{e}(\widetilde{\Uu})$ in $\PP(T_X)$.
The next proposition is essentially proved in {\cite[Proposition~5]{OSW}} under the assumption that $\Kk$ is \emph{locally unsplit}, i.\,e., $\Kk_x$ is proper for general $x\in X$.

\begin{Prop}[cf. {\cite[Proposition~5]{OSW}}]
Let $\Kk$ be a family of unbendable rational curves on $X$.
Then, for general $x\in X$, $\breve\Cc|_x\subseteq\PP(T_X|_x)$ is the projectively dual variety of the VMRT $\Cc_x\subseteq\PP(\Omega_X|_x)$.
\end{Prop}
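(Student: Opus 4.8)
The plan is to prove the statement fiberwise: fix a general point $x\in X$ and show that the fiber $\breve\Cc|_x\subseteq\PP(T_X|_x)$ coincides, inside the space of hyperplanes of $T_X|_x$, with the projectively dual variety $\Cc_x^\vee$ of the VMRT $\Cc_x\subseteq\PP(\Omega_X|_x)$. Since $\breve\Cc$ is by definition the closure of $\widetilde e(\widetilde\Uu)$, its fiber over $x$ is the closure of the locus swept out by the minimal sections $\widetilde\ell$ as $[\ell]$ ranges over the members of $\Kk$ through $x$, intersected with $\Pi^{-1}(x)$. So everything reduces to understanding, for a general standard member $\ell$ through $x$, which points of $\PP(T_X|_x)$ its minimal sections pass through.

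First I would make precise the dictionary between minimal sections and hyperplanes. For general $x$ and general $[\ell]\in\Kk_x$, the curve $\ell$ is standard, so $\nu^*T_X\cong\Oo_{\PP^1}(2)\oplus\Oo_{\PP^1}(1)^{\oplus r}\oplus\Oo_{\PP^1}^{\oplus s}$ with $r+s+1=\dim X$. Write $x=\nu(t_0)$. Because there is no nonzero map $\Oo_{\PP^1}(a)\to\Oo_{\PP^1}$ for $a>0$, every trivial quotient $\nu^*T_X\to\Oo_{\PP^1}$ factors through the projection onto the trivial summand $\mathcal N:=\Oo_{\PP^1}^{\oplus s}$. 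Thus a minimal section $\widetilde\ell$ is recorded by a surjection $\mathcal N\to\Oo_{\PP^1}$, and the point $\widetilde\ell\cap\Pi^{-1}(x)$ is the hyperplane $H:=\ker\big(T_X|_x\to\Oo_{\PP^1}|_{t_0}\big)\subseteq T_X|_x$. In particular $H$ contains the fiber $\mathcal P^+|_{t_0}$ of the positive subbundle $\mathcal P^+:=\Oo_{\PP^1}(2)\oplus\Oo_{\PP^1}(1)^{\oplus r}$, an $(r+1)$-dimensional subspace of $T_X|_x$; conversely, letting the surjection $\mathcal N\to\Oo_{\PP^1}$ vary realizes \emph{every} hyperplane of $T_X|_x$ containing $\mathcal P^+|_{t_0}$. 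Hence the minimal sections over a single $\ell$ sweep out, over $x$, precisely the linear $\PP^{s-1}$ of hyperplanes containing $\mathcal P^+|_{t_0}$.

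The key geometric input is to identify $\mathcal P^+|_{t_0}$ with an affine tangent space. The plan is to invoke the standard description of the tangent map from the theory of minimal rational curves (cf. \cite{OSW} and the references therein): for general $x$ and general $[\ell]\in\Kk_x$, the tangent map $\tau_x$ is an immersion at $[\ell]$, $\dim\Cc_x=r$, and the affine tangent space to the affine cone $\widehat\Cc_x$ over $\Cc_x$ at the point $\alpha:=[T_x\ell]\in\Cc_x$ equals exactly $\mathcal P^+|_{t_0}\subseteq T_X|_x$. Granting this, the hyperplanes $H\supseteq\mathcal P^+|_{t_0}$ produced above are precisely the hyperplanes tangent to $\Cc_x$ at the smooth point $\alpha$. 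Taking the union as $[\ell]$, equivalently $\alpha$, ranges over $\Kk_x$ and passing to the closure, the locus obtained is by definition the projectively dual variety $\Cc_x^\vee$. Finally I would check that this exhausts $\breve\Cc|_x$: both $\breve\Cc|_x$ and $\Cc_x^\vee$ are irreducible, as images of the irreducible $\widetilde\Uu$ and $\Cc_x$ respectively, and the dimension bookkeeping—$(r+1)$-dimensional tangent cones sitting in $\PP^{s-1}$'s with $\alpha$ moving in an $r$-dimensional base—matches the expected dimension of the dual variety, so a density-and-dimension argument forces equality.

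The hard part is the tangent-space identification in the previous paragraph, namely that the positive part $\mathcal P^+$ of the splitting computes the embedded tangent space of the VMRT, together with the genericity statements: that a general $[\ell]\in\Kk_x$ is standard, that $\alpha$ is a smooth point of $\Cc_x$, and that $\tau_x$ is an immersion there. These are exactly the places where local unsplitness, the dominance of $\Kk$, and generic smoothness enter; once they are in hand, the conormal/duality matching and the final closure argument are formal. A secondary technical point to handle with care is the passage to closures on both sides, ensuring that no component of $\Cc_x^\vee$ arising from the singular locus of $\Cc_x$ is lost and that the generic point of $\breve\Cc|_x$ indeed comes from a standard $\ell$ and a genuine tangent hyperplane.
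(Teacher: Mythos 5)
Two preliminary remarks before the mathematics. First, the paper does not prove this statement at all: Proposition 2.1 is imported verbatim from \cite[Proposition~5]{OSW}, so there is no in-paper proof to compare against, and your attempt has to be measured against the standard argument (which is essentially what that reference does). Second, your core construction is exactly that standard argument, and it is correct: under the Grothendieck convention, a minimal section of a standard member $\ell$ through $x$ corresponds to a trivial quotient $\nu^*T_X\to\Oo_{\PP^1}$, which necessarily kills the positive part $\Oo_{\PP^1}(2)\oplus\Oo_{\PP^1}(1)^{\oplus r}$, so the points of $\PP(T_X|_x)$ swept by minimal sections of $\ell$ are precisely the hyperplanes of $T_X|_x$ containing the fiber of the positive part, a linear $\PP^{s-1}$; combined with the Hwang--Mok identification of that fiber with the affine tangent space of the cone $\widehat\Cc_x$ at $\alpha=[T_x\ell]$, these are precisely the hyperplanes tangent to $\Cc_x$ at $\alpha$. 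Up to this point your proposal is sound, with the key nontrivial input correctly isolated and attributed.

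The genuine gap is in your final exhaustion step. You claim that ``both $\breve\Cc|_x$ and $\Cc_x^\vee$ are irreducible, as images of the irreducible $\widetilde{\Uu}$ and $\Cc_x$ respectively,'' and then argue by density and dimension. Neither irreducibility claim is justified, and both fail in general: $\Cc_x$ need not be irreducible for a locally unsplit dominating family --- indeed, in the very applications this paper makes of the proposition, the VMRT is a \emph{finite set of several points} (for instance, three points for the family of lines on $V_5$, where $\breve\Cc|_x$ is explicitly a union of three lines in the proof of Proposition~\ref{V5}), so $\Cc_x^\vee$ is visibly reducible; and $\breve\Cc|_x$ is a fiber of the irreducible variety $\breve\Cc$ over a point of $X$, not an image of $\widetilde{\Uu}$, and fibers of irreducible varieties need not be irreducible. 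Moreover the dimension bookkeeping is a red herring: once the sweep over $x$ is identified with the locus of hyperplanes tangent to $\Cc_x$ at smooth points in the image of $\tau_x$, its closure \emph{is} $\Cc_x^\vee$ by the definition of the projective dual (applied component by component), with no dimension count needed. What actually remains to be proved is the commutation of ``taking the fiber over $x$'' with ``taking the closure,'' i.e.\ that $\breve\Cc|_x$ equals the closure of the sweep over $x$ for general $x$, together with the fact that for general $x$ the non-standard members of $\Kk_x$ and the non-section members of $\widetilde{\Kk}$ over $x$ contribute nothing new. This is handled not by irreducibility but by a relative argument: restrict to a dense open $U\subseteq X$ over which every member of $\Kk$ through a point is free, the standard members are dense in each $\Kk_x$, and the relative tangent-hyperplane locus (the image of the relative conormal variety) is a closed subset of $\Pi^{-1}(U)$; since it contains the image of $\widetilde{e}$ over $U$ and is contained in $\breve\Cc$ there, the two closed sets agree over $U$, and one concludes fiberwise. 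With that replacement your proof is complete; as written, the last step would break precisely in the reducible-VMRT cases that the rest of the paper relies on.
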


Due to the proposition above, $\breve\Cc$ is called the \emph{total dual VMRT} associated to $\Kk$.
As there exists a member $[\widetilde{\ell}]\in\widetilde{\Kk}$ which satisfies $\zeta.\widetilde{\ell}=0$, we can observe that $\breve\Cc$ is 
\[
\label{condition}
\tag{$\dagger$}
\text{dominated by a family of rational curves $\widetilde{\ell}$ satisfying $\zeta.\widetilde{\ell}=0$ on $\PP(T_X)$}.
\]
Notice that for such $\widetilde{\ell}$ with $\zeta.\widetilde{\ell}=0$, $\Pi(\widetilde{\ell})$ is a curve on $X$ as $\zeta$ is relatively ample on $\PP(T_X)$ over $X$.

\medskip

Our main strategy to disprove the bigness of $T_X$ is to find an effective divisor $\Dd$ on $\PP(T_X)$ which attains the property \eqref{condition} and apply the following lemma.

\begin{Lem}\label{repeat}
Let $\Dd=\sum_{i=1}^n
{\Dd_i}$ for some irreducible and reduced effective divisors $\Dd_i$ on $\PP(T_X)$.
Assume that $\Dd_i$ satisfies \eqref{condition} for all $i=1,\,2,\,\ldots,\,n$.
If $\Dd\sim k\zeta+\Pi^*H$ for some $k>0$ and effective divisor $H$ on $X$ (possibly $H=0$), then $\zeta$ is not big on $\PP(T_X)$.
\end{Lem}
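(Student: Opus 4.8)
The plan is to argue by contradiction: assume that $\zeta$ is big and interpolate between the big class $k\zeta$ and the non-pseudoeffective class $-\Pi^*H$ along the segment spanned by the effective divisor $\Dd=\sum_i\Dd_i$. Concretely, for $t\in[0,1]$ I would set $L_t:=k\zeta-t\Dd$, so that $L_0=k\zeta$ is big (as $\zeta$ is assumed big), while $L_1=k\zeta-\Dd\sim-\Pi^*H$ has volume zero, since $-\Pi^*H$ is either trivial or the negative of a non-zero effective divisor and hence not pseudoeffective. The function $t\mapsto\mathrm{vol}(L_t)$ is continuous on $[0,1]$ with $\mathrm{vol}(L_0)=k^{\,n}\mathrm{vol}(\zeta)>0$ and $\mathrm{vol}(L_1)=0$, where $n=\dim\PP(T_X)$. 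The whole point is then to show that $\mathrm{vol}(L_t)$ is in fact constant on the locus where $L_t$ is big, which is the desired contradiction.

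The first ingredient is that property \eqref{condition} forces every $\Dd_i$ into the augmented base locus $\mathbf B_+$ of each big $L_t$. Indeed, fix $t$ with $L_t$ big, let $\widetilde\ell$ be a general member of the dominating family on $\Dd_i$ provided by \eqref{condition}, and write $\ell:=\Pi_*\widetilde\ell$, an effective $1$-cycle on $X$. Then
\[
L_t.\widetilde\ell \;=\; k\,(\zeta.\widetilde\ell)\;-\;t\,(\Dd.\widetilde\ell)\;=\;-\,t\,(\Pi^*H.\widetilde\ell)\;=\;-\,t\,(H.\ell)\;\le\;0,
\]
because $\zeta.\widetilde\ell=0$, $H$ is effective, and $\ell$ is an effective curve. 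Thus $\Dd_i$ is dominated by a family of curves meeting $L_t$ non-positively; writing a Kodaira decomposition $L_t\sim_{\mathbb Q}A+E$ with $A$ ample and $E\ge0$, a general such $\widetilde\ell$ would satisfy $L_t.\widetilde\ell=A.\widetilde\ell+E.\widetilde\ell>0$ unless $\Dd_i\subseteq\mathrm{Supp}(E)$. As this holds for every such decomposition, $\Dd_i$ lies in $\mathbf B_+(L_t)$, and consequently the restricted volume $\mathrm{vol}_{\PP(T_X)\mid\Dd_i}(L_t)$ vanishes.

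The second ingredient is the differentiability of the volume function on the big cone together with the identification of its derivative with restricted volumes: for a big class $L$ and a prime divisor $\Gamma$ one has $\tfrac{d}{ds}\mathrm{vol}(L-s\Gamma)=-\,n\,\mathrm{vol}_{\PP(T_X)\mid\Gamma}(L-s\Gamma)$. Applying this componentwise to $\Dd=\sum_i\Dd_i$ gives, on the open set where $L_t$ is big,
\[
\frac{d}{dt}\,\mathrm{vol}(L_t)\;=\;-\,n\sum_i \mathrm{vol}_{\PP(T_X)\mid\Dd_i}(L_t)\;=\;0 .
\]
Hence $\mathrm{vol}(L_t)$ is locally constant, and therefore equal to $\mathrm{vol}(L_0)>0$, on the component of the big locus containing $t=0$; letting $t_0:=\inf\{t:\mathrm{vol}(L_t)=0\}\in(0,1]$ and using continuity, this forces $\mathrm{vol}(L_{t_0})=\mathrm{vol}(L_0)>0$, contradicting $\mathrm{vol}(L_{t_0})=0$. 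I expect the main obstacle to be exactly this last step: making rigorous the differentiation of $\mathrm{vol}$ along the \emph{reducible} boundary divisor $\Dd$ and the vanishing of the restricted volumes uniformly along the whole segment $t\in[0,t_0)$ — that is, controlling the augmented base loci of all the perturbed classes $L_t$ simultaneously, rather than only at $t=0$. The numerical identity $\zeta.\widetilde\ell=0$ is what makes the intersection computation work, but it is precisely the passage from this single identity to a statement about volumes (an asymptotic, cohomological invariant) that requires the base-locus and restricted-volume machinery above; a softer alternative, should the full differentiability be awkward to invoke, is to run the same interpolation at the level of the convex, continuous, and monotone function $\mathrm{vol}(L_t)$ and derive the contradiction from the vanishing of its one-sided derivatives.
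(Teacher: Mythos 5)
Your interpolation strategy is genuinely different from the paper's argument, but it has a real gap at its central step, and in the most important case of the lemma the missing statement is exactly as strong as the lemma itself. The derivative identity you invoke, $\tfrac{d}{dt}\operatorname{vol}(L_t)=-N\sum_i\operatorname{vol}_{\PP(T_X)|\Dd_i}(L_t)$ with $N=\dim\PP(T_X)$, is established in the literature (Ein--Lazarsfeld--Musta\c{t}\u{a}--Nakamaye--Popa, Lazarsfeld--Musta\c{t}\u{a}) only under the hypothesis that the prime divisor is \emph{not} contained in $\mathbf{B}_+$ of the class being differentiated; you apply it precisely to divisors you have just placed inside $\mathbf{B}_+(L_t)$. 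The unconditional statement (Boucksom--Favre--Jonsson) reads $\tfrac{d}{dt}\operatorname{vol}(L_t)=-N\,\langle L_t^{N-1}\rangle\cdot\Dd$ with the positive intersection product, and vanishing of the restricted volumes does \emph{not} yield $\langle L_t^{N-1}\rangle\cdot\Dd_i=0$; that vanishing is known when $\Dd_i\subseteq\mathbf{B}_-(L_t)$ (Nakayama's $\sigma$-decomposition and orthogonality of the divisorial Zariski decomposition), i.e.\ when your covering curves meet $L_t$ \emph{strictly} negatively, but \eqref{condition} only ever gives $L_t.\widetilde{\ell}=-t\,H.\Pi(\widetilde{\ell})\le 0$, with equality for every $t$ whenever $H.\Pi(\widetilde{\ell})=0$. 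This is not a corner case: the lemma explicitly allows $H=0$, the paper applies it with $H=0$ (No.~21, 23, 24, where $\Dd\sim 4\zeta,\,4\zeta,\,3\zeta$), and even when $H\neq 0$ some $\Dd_i$ is typically covered by curves with $H.\Pi(\widetilde{\ell})=0$ (e.g.\ the conic fibers in \ref{MM:2-20}). For $H=0$ the collapse is visible: $L_t\equiv(1-t)k\zeta$, so under your standing assumption that $\zeta$ is big, $\operatorname{vol}(L_t)=(1-t)^Nk^N\operatorname{vol}(\zeta)$ is strictly decreasing, while your other deductions ($\Dd_i\subseteq\mathbf{B}_+(L_t)$, all restricted volumes zero) remain valid --- so the identity you need fails in exactly the configuration you have set up. Equivalently, the vanishing $\langle \zeta^{N-1}\rangle\cdot\Dd=0$ that would make the derivative zero is incompatible with bigness of $\zeta$, since $\langle\zeta^{N-1}\rangle\cdot\Dd=k\,\langle\zeta^{N-1}\rangle\cdot\zeta\ge k\operatorname{vol}(\zeta)$; establishing it \emph{is} establishing the lemma, so in this case the missing step is not a technicality but the entire content.

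There is a second, smaller gap: the inequality $L_t.\widetilde{\ell}=-t\,H.\Pi(\widetilde{\ell})\le 0$ is justified ``because $H$ is effective and $\Pi(\widetilde{\ell})$ is an effective curve'', but an effective divisor can meet a curve contained in its support negatively, and \eqref{condition} does not force the images of the covering curves to move out of $\operatorname{Supp}(H)$: if $\Pi(\Dd_i)$ is a surface $S$, then $\Dd_i=\Pi^{-1}(S)$ (both being irreducible of the same dimension) and the curves $\Pi(\widetilde{\ell})$ sweep only $S$, which may be a component of $\operatorname{Supp}(H)$. Both difficulties are precisely what the paper's proof is built to avoid: it never intersects the covering curves with anything but ample classes, choosing $A$ so that $A+\sum_{i\le l}H_i$ stays ample at every step, and it derives the contradiction from the minimality of the integer $m$ with $m\zeta-\Pi^*A$ effective (Kodaira's lemma), peeling off one $\Dd_i$ at a time. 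That argument is elementary, uses no asymptotic invariants, and is insensitive both to the sign of $H.\Pi(\widetilde{\ell})$ and to whether the $\Dd_i$ dominate $X$.
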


\begin{proof}
Suppose that $\zeta$ is big.
We can write $\Dd_i=k_i\zeta+\Pi^*H_i$ for some $k_i\geq 0$ and divisor $H_i$ on $X$.
Let $A$ be an ample divisor on $X$ such that $A+\sum_{i=1}^{l} H_i$ is also ample for all $1\leq l\leq n$.
Then there exists the smallest integer $m>0$ such that $m\zeta-\Pi^*A$ is effective by Kodaira's lemma.
Let $\Ee_0\sim m\zeta-\Pi^*A$ be an effective divisor.
For a general curve $\widetilde{\ell_1}$ in the family whose members cover the divisor $\Dd_1$, we have
\[
\Ee_0.\widetilde{\ell_1}
=(m\zeta-\Pi^*A).\widetilde{\ell_1}
=-A.\Pi(\widetilde{\ell_1})<0.
\]
Thus $\Ee_0-\Dd_1\geq 0$, and we can find an effective divisor $\Ee_1=\Ee_0-\Dd_1$.
For a general curve $\widetilde{\ell_2}$ in the family whose members cover the divisor $\Dd_2$, we also have
\[
\Ee_1.\widetilde{\ell_2}
=((m-k_1)\zeta-(\Pi^*A+\Pi^*H_1)).\widetilde{\ell_2}
=-(A+H_1).\Pi(\widetilde{\ell_2})<0.
\]
as $A+H_1$ is ample by the assumption.
Thus we are again able to take an effective divisor $\Ee_2=\Ee_1-\Dd_2$.
By iterating the process, we arrive to obtain an effective divisor
\[
\Ee_n
=\Ee_{n-1}-\Dd_n
=\cdots
=\Ee_0-\Dd
=\left(m-\sum_{i=1}^n k_i\right)\zeta-\left(\Pi^*A+\sum_{i=1}^n \Pi^*H_i\right)
=(m-k)\zeta-\Pi^*A-\Pi^*H
\]
on $\PP(T_X)$.
In particular, $(m-k)\zeta-\Pi^*A$ is effective, and it contradicts the minimality of $m$.
\end{proof}

Now we want to find the explicit linear class of the total dual VMRT.
Let $\Kk$ be a family of {\color{black} unbendable} rational curves on $X$.
Let $\overline{\Kk}$ be the normalization of the closure of $\Kk$ considered as a subscheme of $\text{Chow}(X)$ and $\overline{q}: \overline{\Uu}\to \overline{\Kk}$ be the normalization of the universal family over $\overline{\Kk}$ which associates the evaluation morphism $\overline{e}: \overline{\Uu}\to X$.

\[\xymatrix @R=2pc{
\PP(\overline{e}^*T_X) \ar[r] \ar[d] &
\PP(T_X) \ar[d]^\Pi \\
\overline{\Uu} \ar[r]_{\overline{e}} \ar[d]_{\overline{q}} &
X\\
\overline{\Kk}
}\]

We denote by $\Tt_{\overline{\Uu}/\overline{\Kk}}$ the dual $\Omega_{\overline{\Uu}/\overline{\Kk}}^\vee\to T_{\overline\Uu}$.
Then there exists the exact sequence 
\[
0
\to \Tt_{\overline{\Uu}/\overline{\Kk}}
\to \overline{e}^*T_X
\to \Gg
\to 0
\]
for some coherent sheaf $\Gg$ on $\overline{\Uu}$.

\begin{Prop}[{\cite[Corollary~2.10, Remark~2.12, Corollary~2.13]{HLS}}]
\label{conic_bundle_formula}
Let $X$ be a smooth projective variety and $\Kk$ be a family of unbendable rational curves which is locally unsplit.
Assume that $\Kk$ has zero-dimensional VMRTs and $\Gg$ is locally free in codimension~$1$.
Then the linear class of the total dual VMRT $\breve\Cc$ on $\PP(T_X)$ is explicitly given as follows.
\[
[\breve\Cc]
\sim \deg(\overline{e})\cdot\zeta-\Pi^*\overline{e}_*(c_1(\Tt_{\overline{\Uu}/\overline{\Kk}}))
\]
Moreover, the condition on $\Gg$ is verified when $\Tt_{\overline{\Uu}/\overline{\Kk}}$ is locally free and $\overline{e}_{[\ell]}: \overline{\Uu}_{[\ell]}\to X$ is immersed for general ${[\ell]}\in\overline{\Kk}$.
In particular, if $X$ admits a conic bundle structure $\pi: X\to Y$ over a smooth projective variety $Y$, then the total dual VMRT $\breve\Cc$ on $\PP(T_X)$ associated to the fibers of $\pi$ satisfies
\[
[\breve\Cc]
\sim \zeta+\Pi^*(K_X-\pi^*K_Y).
\]
\end{Prop}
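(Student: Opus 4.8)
The plan is to exhibit $\breve\Cc$ as the image under $p$ of a single incidence divisor $Z$ on the fiber product $\PP(\overline e^*T_X)=\overline\Uu\times_X\PP(T_X)$, compute the class $[Z]$ by a Chern class calculation, and push it forward. Write $p\colon\PP(\overline e^*T_X)\to\PP(T_X)$ and $\rho\colon\PP(\overline e^*T_X)\to\overline\Uu$ for the two projections, so that the square they form with $\Pi$ and $\overline e$ is Cartesian and $\Oo_{\PP(\overline e^*T_X)}(1)=p^*\zeta$. Pulling the inclusion $\Tt_{\overline\Uu/\overline\Kk}\hookrightarrow\overline e^*T_X$ back by $\rho$ and composing with the tautological quotient $\rho^*\overline e^*T_X\twoheadrightarrow p^*\zeta$ produces a section of $p^*\zeta\otimes\rho^*\Tt_{\overline\Uu/\overline\Kk}^\vee$. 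I would take $Z$ to be its zero locus: by construction $Z$ is exactly the locus where the tautological hyperplane contains the tangent direction recorded by $\Tt_{\overline\Uu/\overline\Kk}$, i.e. the fiberwise incidence condition cutting out the projective dual, so that $p(Z)=\breve\Cc$.

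Granting that $Z$ carries the expected class, one has $[Z]=p^*\zeta-\rho^*c_1(\Tt_{\overline\Uu/\overline\Kk})$, and $[\breve\Cc]=p_*[Z]$ splits into two standard pieces. Since $p$ is the base change of the generically finite map $\overline e$ along the flat map $\Pi$, it is generically finite of degree $\deg(\overline e)$, so the projection formula gives $p_*(p^*\zeta)=\deg(\overline e)\,\zeta$. Since the square is Cartesian with $\Pi$ flat and $\overline e$ proper, flat base change gives $p_*\rho^*=\Pi^*\overline e_*$ on cycle classes, whence $p_*(\rho^*c_1(\Tt_{\overline\Uu/\overline\Kk}))=\Pi^*\overline e_*c_1(\Tt_{\overline\Uu/\overline\Kk})$. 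Subtracting yields the asserted formula, provided $p|_Z$ is birational onto $\breve\Cc$; this holds because, the VMRT being $0$-dimensional, a general point of a fiber of $\breve\Cc$ is a hyperplane containing exactly one of the finitely many tangent directions, so the general fiber of $p|_Z$ is a single point.

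The subtle point — and the reason for the hypothesis that $\Gg$ be locally free in codimension one — is precisely the step ``granting that $Z$ carries the expected class.'' The section above vanishes in the expected codimension one, with class $p^*\zeta-\rho^*c_1(\Tt_{\overline\Uu/\overline\Kk})$ and no spurious divisorial component, only where $\Tt_{\overline\Uu/\overline\Kk}\hookrightarrow\overline e^*T_X$ is a subbundle, that is, where $\Gg$ is locally free; if $\Gg$ degenerates along a divisor of $\overline\Uu$ the zero locus acquires an extra component that alters the class. Requiring $\Gg$ locally free in codimension one confines the degeneracy to codimension at least two, hence irrelevant to the divisor class. For the final clause I would verify this hypothesis directly: if $\Tt_{\overline\Uu/\overline\Kk}$ is locally free and $\overline e_{[\ell]}$ is immersed for general $[\ell]$, then $\Tt_{\overline\Uu/\overline\Kk}\to\overline e^*T_X$ is a fiberwise-injective map of bundles along the general fiber of $\overline q$, so $\Gg$ is locally free there and thus in codimension one. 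I expect this control of the non-subbundle locus, together with the birationality of $p|_Z$, to be the genuine obstacle; the Chern class and base change manipulations are then formal.

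The conic bundle assertion is the special case where $\Kk$ is the family of fibers of $\pi\colon X\to Y$. Through a general point there passes a unique fiber, so the normalized universal family is $\overline\Uu=X$ with $\overline e=\mathrm{id}$ and $\deg(\overline e)=1$, while $\Tt_{\overline\Uu/\overline\Kk}=T_{X/Y}$ has $c_1(T_{X/Y})=-K_{X/Y}=-K_X+\pi^*K_Y$. Substituting into the main formula gives $[\breve\Cc]\sim\zeta-\Pi^*(-K_X+\pi^*K_Y)=\zeta+\Pi^*(K_X-\pi^*K_Y)$, as claimed.
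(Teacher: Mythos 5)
Your route is the same as that of the cited source \cite{HLS} (the paper itself does not reprove this proposition; it only quotes it): exhibit the incidence divisor $Z$ (which over the good locus is $\PP(\Gg)\subseteq\PP(\overline{e}^*T_X)$), compute $[Z]=p^*\zeta-\rho^*c_1(\Tt_{\overline{\Uu}/\overline{\Kk}})$, and push forward using $p_*p^*\zeta=\deg(\overline{e})\,\zeta$ and $p_*\rho^*=\Pi^*\overline{e}_*$; these formal steps, and the specialization to conic bundles, are correct. The genuine gap is in your justification that $p|_Z$ is birational onto $\breve\Cc$. That a general $w\in\breve\Cc|_x$ lies on exactly one dual hyperplane $\check H_v$, $v\in\Cc_x$, only shows that every preimage of $w$ in $Z$ lies over a point $u\in\overline{e}^{-1}(x)$ whose tangent direction equals that particular $v$; it does not show there is only one such $u$. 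What is needed is injectivity, for general $x$, of the map sending $u\in\overline{e}^{-1}(x)$ to its tangent direction in $\PP(\Omega_X|_x)$. This has two ingredients: first, a general $x$ is not a multiple point of any member of $\overline{\Kk}$ (a dimension count, using $\dim\Kk=\dim X-1$ for $0$-dimensional VMRTs), so $\overline{e}^{-1}(x)\cong\Kk_x$; second, distinct members of $\Kk_x$ have distinct tangent directions at a general $x$, which is the birationality of the tangent map, i.e.\ the theorem of Hwang--Mok \cite{HM}, and is \emph{not} a formal consequence of the VMRT being finite. Since $Z$ and $\breve\Cc$ are irreducible, one has $p_*[Z]=\delta\,[\breve\Cc]$ for some integer $\delta\geq 1$, and without the injectivity above you cannot rule out $\delta>1$, in which case the stated formula would be off by that factor.

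A second, smaller gap is in your verification of the ``Moreover'' clause. From local freeness of $\Tt_{\overline{\Uu}/\overline{\Kk}}$ and immersedness of $\overline{e}_{[\ell]}$ for general $[\ell]$ you conclude that $\Gg$ is locally free along general fibers of $\overline{q}$, ``and thus in codimension one''; but the complement of the union of general fibers is $\overline{q}^{-1}(B)$ for a proper closed subset $B\subsetneq\overline{\Kk}$, and when $B$ is a divisor this complement is a divisor, so your argument only yields that $\Gg$ is \emph{generically} locally free. To reach codimension $\geq 2$ you must also exclude divisorial components of the degeneracy locus lying over divisors $B''\subseteq\overline{\Kk}$: such a component would contain the whole fiber over a general point of $B''$, so the composite $\Tt_{\overline{\Uu}/\overline{\Kk}}\to\overline{e}^*T_X$, which along a fiber is the differential of $\overline{e}_{[\ell]}$, would vanish identically there; in characteristic $0$ this forces $\overline{e}$ to contract that fiber to a point, contradicting that every fiber maps onto a curve in $X$. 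Hence any divisorial bad component dominates $\overline{\Kk}$ and is killed by generic immersedness. With these two repairs your proof is complete and coincides with the argument of \cite{HLS}.
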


\begin{Rmk}
Let $\pi: X=\PP(E)\to \PP^2$ be the ruled variety associated to a vector bundle $E$ of rank $2$ on $\PP^2$ whose normalization has negative degree.
That is, $H^0(E\otimes L)\neq 0$ for some line bundle $L$ with $\det (E\otimes L)< 0$.
After normalization, we may assume that $H^0(E)\neq 0$ and $\det E=\Oo_{\PP^2}(-a)$ for some $a>0$.
Then, by Proposition~\ref{conic_bundle_formula}, we obtain an effective divisor
\[
[\breve\Cc]
\sim\zeta+\Pi^*(K_X-\pi^*K_{\PP^2})
=\zeta+\Pi^*(-2H-ah)
\]
on $\PP(T_X)$ for $h=\pi^*\Oo_{\PP^2}(1)$ and the tautological class $H=\Oo_{\PP(E)}(1)$.

Then we can write $\zeta$ by a positive linear combination
\[
\zeta
\sim[\breve\Cc]+2\Pi^*H+a\Pi^*h
\]
of \emph{three} effective divisors on $\PP(T_X)$ which are linearly independent in $\text{N}^1(\PP(T_X))$.
As a multiple of $\zeta$ lies in the interior of the cone $\overline{\text{Eff}}(\PP(T_X))$ of effective divisors, $\zeta$ is big on $\PP(T_X)$.
That is, $T_X$ is big.

However, when $X$ is a Fano threefold given as above (No. 35 and 36 in \cite[Table~2]{MM}), the bigness of $T_X$ follows from \cite{Hsi} as such Fano threefolds are toric.
\end{Rmk}

\begin{Rmk}
\label{MM:2-32}
Let $\pi: X=\PP(T_{\PP^2})\to \PP^2$ be the ruled variety associated to the tangent bundle $T_{\PP^2}$ of $\PP^2$.\linebreak
This is the Fano threefold $X$ isomorphic to a $(1,1)$-divisor on $\PP^2\times \PP^2$ (No. 32 in \cite[Table~2]{MM}).
Then $X$ has two $\PP^1$-fibration structures $\pi_i: X\to \PP^2$ for $i=1,\,2$. 
\[\xymatrix @R=2pc @C=2pc {
X \ar[d]_{\pi_1} \ar[r]^{\pi_2} & \PP^2 \\ \PP^2
}\]
We denote by $h_i=\pi_i^*\Oo_{\PP^2}(1)$.
Then $-K_X\sim 2h_1+2h_2$.

Let $\breve\Cc_i$ be the total dual VMRT on $\PP(T_X)$ associated to the family of fibers of $\pi_i: X\to\PP^2$ for $i=1,\,2$.
Then, by Proposition~\ref{conic_bundle_formula}, we have
\begin{align*}
[\breve\Cc_1]
&\sim\zeta+\Pi^*(K_X-\pi_1^* K_{\PP^2})
=\zeta+\Pi^*((-2h_1-2h_2)-(-3h_1))
=\zeta+\Pi^*h_1-2\Pi^*h_2,\\
[\breve\Cc_2]
&\sim\zeta+\Pi^*(K_X-\pi_2^* K_{\PP^2})
=\zeta+\Pi^*((-2h_1-2h_2)-(-3h_2))
=\zeta-2\Pi^*h_1+\Pi^*h_2.
\end{align*}
So we can write a multiple of $\zeta$ by a positive linear combination of effective divisors on $\PP(T_X)$ as
\[
2\zeta
\sim[\breve\Cc_1]+[\breve\Cc_2]+\Pi^*(h_1+h_2).
\]
Thus $\zeta$ is big on $\PP(T_X)$.
That is, $T_X$ is big.
\end{Rmk}

In the case where a Fano threefold $X$ has Picard number 2, the following proposition provides a~criterion to disprove the bigness of $T_X$ by making use of two rational curves on $X$ not belonging to a given family which associates a total dual VMRT on $\PP(T_X)$.

\begin{Prop}
\label{three_family}
Let $X$ be a smooth projective threefold with $\Pic(X)\cong\ZZ H_1\oplus\ZZ H_2$ for some effective divisors $H_1$ and $H_2$.
Let $\breve\Cc$ be the total dual VMRT associated to a family $\Kk$ of {\color{black} unbendable} rational curves on $X$.
Assume that $\breve\Cc$ is a divisor on $\PP(T_X)$.
Moreover, assume that there exist two smooth rational curves $\ell_1$ and $\ell_2$ on $X$ such that
\begin{itemize}
\item
for each $i=1,\,2$, there exists a point $x_i\in \ell_i$ where the VMRT $\Cc_{x_i}\subseteq\PP(\Omega_X|_{x_i})$ of $\Kk$ at $x_i$ is defined and $\breve\Cc|_{x_i}$ is the projective dual of $\Cc_{x_i}$.
Moreover,
\vspace{-0.3em}
\[
\tau_{\ell_i, x_i}\not\in\Cc_{x_i}
\]
for the tangent direction $\tau_{\ell_i,x_i}\in\PP(\Omega_X|_{x_i})$ of $\ell_i$ at $x_i$,

\item 
the normal bundle of $\ell_i$ in $X$ is given by
\vspace{-0.3em}
\begin{align*}
N_{\ell_i|X}\cong\Oo_{\PP^1}(a_{i1})\oplus\Oo_{\PP^1}(a_{i2})\ 
\text{for some $a_{i2}\leq a_{i1}\leq 0$},
\end{align*}

\item
and the intersection numbers satisfy
\vspace{-0.3em}
\[
H_1.{\ell_1}=0,\qquad
H_2.{\ell_1}>0,\qquad
H_1.{\ell_2}>0,\qquad
H_2.{\ell_2}=0.
\]
\end{itemize}
Then $T_X$ is not big.
\end{Prop}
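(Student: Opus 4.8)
The plan is to show that the class of $\breve\Cc$ itself already has the shape required by Lemma~\ref{repeat}. Write $[\breve\Cc]\sim k\zeta+b_1\Pi^*H_1+b_2\Pi^*H_2$ in $\Pic(\PP(T_X))=\ZZ\zeta\oplus\ZZ\Pi^*H_1\oplus\ZZ\Pi^*H_2$. Restriction to a general fibre $\PP(T_X|_x)\cong\PP^2$ identifies $k$ with the degree of the plane curve $\breve\Cc|_x$ (the projective dual of the VMRT $\Cc_x$), so $k\geq1$. Since $\breve\Cc$ is the total dual VMRT of a locally unsplit dominating family, it satisfies \eqref{condition}. Hence, once $b_1\geq0$ and $b_2\geq0$ are established, the divisor $H:=b_1H_1+b_2H_2$ is effective because $H_1,H_2$ are effective, and $[\breve\Cc]\sim k\zeta+\Pi^*H$ with $k>0$; Lemma~\ref{repeat} then gives at once that $\zeta$ is not big, i.e.\ that $T_X$ is not big. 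So everything reduces to the two sign statements $b_1\geq 0$ and $b_2\geq 0$.

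To read off these signs I would restrict $\breve\Cc$ to a surface built from each $\ell_i$. The normal bundle sequence of $\ell_i$ splits $T_X|_{\ell_i}\cong\Oo_{\PP^1}(2)\oplus\Oo_{\PP^1}(a_{i1})\oplus\Oo_{\PP^1}(a_{i2})$ (the extension splits since $\operatorname{Ext}^1(\Oo_{\PP^1}(a_{ij}),\Oo_{\PP^1}(2))=0$), and the quotient $T_X|_{\ell_i}\twoheadrightarrow N_{\ell_i|X}$ embeds the Hirzebruch surface $S_i:=\PP(N_{\ell_i|X})\cong\mathbb F_{a_{i1}-a_{i2}}$ into $\PP(T_X|_{\ell_i})\subseteq\PP(T_X)$ as the locus of tangent hyperplanes containing the tangent direction of $\ell_i$. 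Writing $C_-$ and $f$ for the negative section and the fibre of $S_i$, one computes $\zeta|_{S_i}=\Oo_{\PP(N_{\ell_i|X})}(1)\sim C_-+a_{i1}f$ and $\Pi^*H_j|_{S_i}=(H_j\cdot\ell_i)\,f$. Feeding in $H_1\cdot\ell_1=0=H_2\cdot\ell_2$ gives
\[
[\breve\Cc]|_{S_1}\sim kC_-+\bigl(ka_{11}+b_2(H_2\cdot\ell_1)\bigr)f,\qquad
[\breve\Cc]|_{S_2}\sim kC_-+\bigl(ka_{21}+b_1(H_1\cdot\ell_2)\bigr)f.
\]

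The crucial input is that each restriction is represented by a genuine effective curve, which requires $S_i\not\subseteq\breve\Cc$, and this is exactly where the hypothesis $\tau_{\ell_i,x_i}\notin\Cc_{x_i}$ enters. The fibre of $S_i$ over $x_i$ is the line $\ell^*\subseteq\PP(T_X|_{x_i})$ of hyperplanes through the tangent direction $\tau_{\ell_i,x_i}$; by the biduality theorem $\ell^*$ lies in the dual curve $\Cc_{x_i}^\vee$ only if $\tau_{\ell_i,x_i}\in\Cc_{x_i}$. As $\breve\Cc|_{x_i}=\Cc_{x_i}^\vee$ by assumption, $\tau_{\ell_i,x_i}\notin\Cc_{x_i}$ forces $(S_i)_{x_i}\not\subseteq\breve\Cc$, hence $S_i\not\subseteq\breve\Cc$. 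Now $\overline{\operatorname{Eff}}(S_i)=\langle C_-,f\rangle$, so any effective class $aC_-+bf$ has $a,b\geq0$; the coefficient of $C_-$ above is $k>0$, so the coefficients of $f$ must be nonnegative, giving $ka_{11}+b_2(H_2\cdot\ell_1)\geq0$ and $ka_{21}+b_1(H_1\cdot\ell_2)\geq0$. Since $a_{11},a_{21}\leq0$ and $H_2\cdot\ell_1,H_1\cdot\ell_2>0$, these inequalities force $b_2\geq0$ and $b_1\geq0$, completing the reduction to Lemma~\ref{repeat}. I expect the only genuine obstacle to be this properness step $S_i\not\subseteq\breve\Cc$, together with checking that $[\breve\Cc]|_{S_i}$ is computed correctly even when the fibre over $x_i$ meets $\breve\Cc$ non-reducedly; the duality argument above, using that the VMRT structure is valid at $x_i$, is what controls it, and the sign hypotheses on the $a_{ij}$ and on the intersection numbers are tuned precisely so that $S_1$ detects $b_2$ while $S_2$ detects $b_1$.
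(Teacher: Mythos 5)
Your proposal is correct and follows essentially the same route as the paper: write $[\breve\Cc]\sim k\zeta+b_1\Pi^*H_1+b_2\Pi^*H_2$, use the hypothesis $\tau_{\ell_i,x_i}\notin\Cc_{x_i}$ together with duality to see that the line $\PP(N_{\ell_i|X}|_{x_i})\subseteq\PP(T_X|_{x_i})$ is not contained in $\breve\Cc$, deduce $b_1,b_2\geq 0$ from the non-positivity of the normal bundle and the stated intersection numbers, and conclude via Lemma~\ref{repeat}. The only cosmetic difference is how the sign is extracted: you restrict $[\breve\Cc]$ to the whole Hirzebruch surface $\PP(N_{\ell_i|X})$ and invoke its effective cone, whereas the paper intersects $\breve\Cc$ with a single section of $\PP(N_{\ell_i|X})\to\ell_i$ of $\zeta$-degree $a_{i1}$ passing through a point of the fibre over $x_i$ not lying on $\breve\Cc$ --- both yield the same inequality $ka_{11}+b_2(H_2.\ell_1)\geq 0$ and its analogue for $\ell_2$.
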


\begin{proof}
Let 
\[
[\breve\Cc]
\sim k\zeta+b_1\Pi^*H_1+b_2\Pi^*H_2
\]
for some $k>0$.
Once we show that $b_i\geq 0$ for $i=1,\,2$, the assertion follows from Lemma \ref{repeat} because $\breve\Cc$ satisfies \eqref{condition}.

Let $x=x_1$ be the point where the first condition holds for $\ell=\ell_1$.
Let $\Hh=\PP(N_{\ell|X}|_{x})\subseteq\PP(T_X|_{x})$ be the hyperplane projectively dual to the point $\tau_{\ell,x}\in\PP(\Omega_X|_{x})$.
Then, from the first condition, we have
\[
\emptyset\not=\Hh\backslash\,\breve\Cc|_{x}\subseteq\PP(T_X|_{x}).
\]
Hence there exists a section $\widetilde{\ell}\subseteq\PP(T_X|_{\ell})\subseteq\PP(T_X)$ corresponding to a quotient $T_X|_{\ell}\to\Oo_{\PP^1}(a)$ for $a=a_{11}$ in a factor of $N_{{\ell}|X}$ passing through a point $w\in\Hh\backslash\breve\Cc|_{x}$ over $x$, i.\,e., $w=\widetilde{\ell}|_{x}=\widetilde{\ell}\cap\PP(T_X|_{x})$ (see Figure~\ref{fig:three_family}).
Thus we have $\widetilde{\ell}|_{x}\cap\breve\Cc|_{x}=\emptyset$ on $\PP(T_X|_{x})$, and it implies that $\widetilde{\ell}$ is not contained in $\breve\Cc$.

From the second condition, we can observe that every section $\widetilde{\ell}\subseteq\PP(T_X|_{\ell})\subseteq\PP(T_X)$ which corresponds to a quotient $T_X|_{\ell}\to\Oo_{\PP^1}(a)$ in a factor of $N_{{\ell}|X}$ satisfies $\zeta.\widetilde{\ell}=a=a_{11}\leq 0$ in $\PP(T_X)$.
Therefore, for such $\widetilde{\ell_1}=\widetilde{\ell}$, we have
\[
b_2(H_2.\ell_1)
\geq k(\zeta.\widetilde{\ell_1})+b_1(\Pi^*H_1.\widetilde{\ell_1})+b_2(\Pi^*H_2.\widetilde{\ell_1})
=[\breve\Cc].\widetilde{\ell_1}
\geq 0
\]
as $\widetilde{\ell_1}$ is not contained in $\breve\Cc$.
Thus $b_2\geq 0$.
By the same argument, we can show that $b_1\geq 0$.
\end{proof}

\begin{figure}[ht]
\vspace{-0.5em}
\begin{tikzpicture}[scale=1]

\pgfmathsetmacro{\op}{0.5}
\pgfmathsetmacro{\wi}{4}
\pgfmathsetmacro{\he}{1.2}
\pgfmathsetmacro{\ti}{1}
\pgfmathsetmacro{\centerx}{0.5}
\pgfmathsetmacro{\centery}{2.2}
\pgfmathsetmacro{\radius}{1.3}
\pgfmathsetmacro{\leng}{0.5}
\pgfmathsetmacro{\cosi}{0.866}
\pgfmathsetmacro{\si}{0.5}

\draw[thick] (0,0)--(\wi,0)--(\wi+\ti,\he)--(\ti,\he)--cycle;

\draw[draw=none, fill=gray!50!white, fill opacity=0.3] (\wi/2+\ti/2,\he/2)--(\centerx+\radius,\centery+0.3)--(\centerx-0.3,\centery-\radius)--cycle;

\draw[color=gray, fill=white, fill opacity=1] (\centerx,\centery) circle (\radius);
\begin{scope}
    \draw[draw=none, clip] (\centerx,\centery) circle (\radius);

    \node[circle,fill=black!50!black,inner sep=0pt,minimum size=3pt] at (\centerx+\radius*\leng*0,\centery-\radius*\leng) {};

    \node[circle,fill=darkgray,inner sep=0pt,minimum size=3pt] at (\centerx+\radius*\leng*\cosi,\centery+\radius*\leng*\si) {};

    \node[circle,fill=darkgray,inner sep=0pt,minimum size=3pt] at (\centerx-\radius*\leng*\cosi,\centery+\radius*\leng*\si) {};
\end{scope}

\draw[draw=none, fill=gray!50!white, fill opacity=0.3] (\wi/2+\ti/2,\he/2)--(\wi+\ti-\centerx-\radius,\centery+0.3)--(\wi+\ti-\centerx+0.3,\centery-\radius)--cycle;

\draw[color=gray, fill=white, fill opacity=1] (\wi+\ti-\centerx,\centery) circle (\radius);
\begin{scope}
    \draw[draw=none, clip] (\wi+\ti-\centerx,\centery) circle (\radius);

    \draw[thick,color=darkgray,shorten >=-3cm,shorten <=-3cm] (\wi+\ti-\centerx+\radius*\leng*\cosi,\centery+\radius*\leng*\si)--(\wi+\ti-\centerx+\radius*\leng*0,\centery-\radius*\leng);
    
    \draw[thick,color=darkgray,shorten >=-3cm,shorten <=-3cm] (\wi+\ti-\centerx-\radius*\leng*\cosi,\centery+\radius*\leng*\si)--(\wi+\ti-\centerx+\radius*\leng*0,\centery-\radius*\leng);
    
    \draw[very thick,color=black!50!black,shorten >=-3cm,shorten <=-3cm] (\wi+\ti-\centerx-\radius*\leng*\cosi,\centery+\radius*\leng*\si)--(\wi+\ti-\centerx+\radius*\leng*\cosi,\centery+\radius*\leng*\si);
    
    \node[circle,fill=black,inner sep=0pt,minimum size=3.5pt] at (\wi+\ti-\centerx+0.25,\centery+\radius*\leng*\si) {};
\end{scope}

\draw[color=darkgray, thick] (\wi*0.25+\ti*0.25,\he*0.8) .. controls (\wi*0.5+\ti*0.5,\he*0.55) .. (\wi*0.6+\ti*0.6,\he*0.2);

\draw[color=darkgray, thick] (\wi*0.75+\ti*0.75,\he*0.8) .. controls (\wi*0.5+\ti*0.5,\he*0.55) .. (\wi*0.7*\ti*0.7,\he*0.2);

\draw[very thick, color=black!50!black] (0.5+\ti/2,\he/2)--(-0.5+\wi+\ti/2,\he/2);

\node[circle,fill=black,inner sep=0pt,minimum size=3pt] at (\wi/2+\ti/2,\he/2) {};

\node at (\wi/2,-0.3) {\small $X$};

\node at (-1.4,1.2) {\small $\PP(\Omega_X|_x)$};
\node at (6.4,1.2) {\small $\PP(T_X|_x)$};

\node at (\wi*0.98,\he*0.25) {\small $\ell$};
\node at (\wi/2+\wi*0.12,\he*0.25) {\small $x$};

\node at (\centerx,\centery+\radius*\leng*\si+0.45) {\small {\color{darkgray}$\Cc_x$}};
\node at (\centerx,\centery-\radius*0.2) {\small {\color{black!50!black}$\tau_{\ell,x}$}};

\node at (\wi+\ti-\centerx+0.2,\centery+\radius*\leng*\si+0.45) {\small {\color{black}$w=\widetilde{\ell}|_x$}};
\node at (\wi+\ti+\centerx,\centery) {\small {\color{black!50!black}$\Hh$}};
\node at (\wi+\ti-\centerx-0.5,\centery-\radius*\leng) {\small {\color{darkgray}$\breve\Cc|_x$}};

\end{tikzpicture}

\vspace{-1em}
\caption{The total dual VMRT $\breve\Cc|_x$ at $x$ when the VMRT $\Cc_x$ is finite.}
\label{fig:three_family}
\end{figure}
\vspace{-1em}

\begin{Rmk}
\label{MM:2-14}
Let $f:X=\Bl_\Gamma V_5\to V_5$ be the blow-up of the smooth del Pezzo threefold $V_5\subseteq\PP^6$ of degree $5$ along a smooth curve $\Gamma$ of degree $5$ and genus $1$ (No. 14 in \cite[Table~2]{MM}).
We have another extremal contraction $p: X\to \PP^1$ whose general fiber is a smooth del Pezzo surface of degree $5$.\linebreak
\vspace{-1em}
\[\xymatrix @R=2pc @C=2pc {
& X \ar[ld]_{f} \ar[rd]^{p} & \\
V_5 & & \PP^1
}\]
We denote by $H_1=f^*\Oo_{\PP^3}(1)$, $H_2=p^*\Oo_{\PP^1}(1)$, and $D_1$ the exceptional divisor of $f: X\to V_5$.
Then
\[
-K_X\sim H_1+H_2=2H_1-D_1,\ \ 
H_2\sim H_1-D_1.
\]

We have the following three families $\Kk_i$ of {\color{black} unbendable} rational curves on $X$:
\begin{itemize}
\item
$\Kk_1$ contains the strict transforms $\ell_1$ of some lines $l_1$ on $V_5$ not meeting $\Gamma$, 
\item
$\Kk_2$ contains the strict transforms $\ell_2$ of some conics $l_2$ on $V_5$ meeting $\Gamma$ at two points,
\item
$\Kk_3$ contains the strict transforms $\ell_3$ of some conics $l_3$ on $V_5$ meeting $\Gamma$.
\end{itemize}
We denote by $\Cc_{i,x}$ the VMRT of $\Kk_i$ at $x\in X$, $\breve\Cc_i$ the total dual VMRT on $\PP(T_X)$ associated to $\Kk_i$, and $\widetilde{\ell_i}\subseteq\PP(T_X|_{\ell_i})\subseteq\PP(T_X)$ a minimal section of $[\ell_i]\in\Kk_i$ for $i=1,\,2,\,3$.

We can observe that $\dim\Kk_1=\dim\Kk_2=2$ and $\dim\Kk_3=3$.
Thus $\Cc_{1,x}$ and $\Cc_{2,x}$ consist of a finite number of points, whereas $\Cc_{3,x}$ is a curve on $\PP(T_X|_x)$ for general $x\in X$.

We first show that $\Cc_{3,x}$ is irreducible.
It is known that the space of conics on $V_5$ is isomorphic to $\mathrm{Gr}(4,5)\cong \PP^4$ \cite[Proposition~1.2.2]{Ili}, and the subspace of conics passing through a fixed point of $V_5$ corresponds to a codimension $2$ linear subspace of $\PP^4$.
Since two general planes in $\mathbb P^4$ meet at a single point, there exists a unique conic passing through a general pair of points of $V_5$.
Moreover, as there are only finitely many conics passing through a fixed point of $V_5$ and meeting $\Gamma$ at two points with multiplicity, for general $x\in X$, we have a rational map $\Gamma \dashrightarrow \PP(\Omega_X|_x)$ sending general $y\in \Gamma$ to the tangent direction at $x$ of the conic passing through $x$ and $y\in\Gamma$, and the closure of its image is $\Cc_{3,x}$.
Thus $\Cc_{3,x}$ is irreducible.

We next show that $\Cc_{3,x}$ is not linear.
Note that $\Cc_{3,x}$ contains all the tangent directions at $x$ of conics $l\subset V_5$ with $x\in l$ and $l.\Gamma=2$.
Let $X_t$ be the fiber of $p:X\rightarrow \mathbb P^1$ with $x\in X_t$.  Then 
$\mathbb P(\Omega_{X_t}|_x)$ is a line in $\PP(\Omega_X|_x)\cong\PP^2$. 
Any rational curve $C$ in $X_t$ with $x\in C$ and $K_{X_t}^{-1}.C=2$ is the strict transform of some irreducible conic $l\subset V_5$ with $x\in l$ and $l.\Gamma=2$. 
Therefore, the number of the intersection points $\Cc_{3,x}\cap \mathbb P(\Omega_{X_t}|_x)$ is greater than or equal to the number of tangent directions of rational curves $C$ in $X_t$ with $x\in C$ and $K_{X_t}^{-1}.C=2$, which is greater than $1$, because we have several conic bundle structures on the del Pezzo surfaces of degree five.
Thus $\deg \Cc_{3,x}>1$.

By the previous arguments, we can now say that the total dual VMRT $\breve\Cc_3$ is a divisor on $\PP(T_X)$.
Due to the irreducibility of $\breve\Cc_{3,x}$, we have $\breve\Cc_i|_x\not\subseteq\breve\Cc_3|_x$ for general $x\in X$ since $\breve\Cc_i|_x$ is a union of a finite number of lines on $\PP(T_X|_x)$ for $i=1,\,2$.
Thus $\breve\Cc_i\not\subseteq\breve\Cc_3$, and we can conclude that $\breve\Cc_3.\widetilde{\ell_i}\geq 0$ as the minimal sections $\widetilde{\ell_i}$ cover $\breve\Cc_i$ for $i=1,\,2$.
Now, let
\[
[\breve\Cc_3]
\sim k\zeta+b_1D_1+b_2H_2
\]
for some $k>0$ and $b_1,\,b_2\in\ZZ$.
Then we have
\[
b_2=\breve\Cc_3.\widetilde{\ell_1}\geq 0
\ \ \text{and}\ \ 
b_1=\frac{1}{2}\breve\Cc_3.\widetilde{\ell_2}\geq 0.
\]
Thus by Lemma \ref{repeat}, $T_X$ is not big.
\end{Rmk}

\begin{Prop}
\label{fibration}
Let $X$ be a smooth projective threefold.
Assume that there exists a morphism $p:X\to C$ onto a smooth projective curve $C$ whose general fiber is a smooth del Pezzo surface of degree $d\leq 4$.
Then $T_X$ is not big.
\end{Prop}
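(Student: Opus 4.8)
The plan is to reduce the non-bigness of $T_X$ to the known non-bigness of the tangent bundle of a del Pezzo surface of degree $d\le 4$ (\cite[Theorem~1.4]{HLS}), by restricting $\zeta=\Oo_{\PP(T_X)}(1)$ to the fibers of the fibration that $p$ induces on $\PP(T_X)$. The point is that a general fiber $F=p^{-1}(c)$ is a smooth del Pezzo surface of degree $d\le 4$, so $T_F$ is not big, and I want to transfer this to $T_X|_F$ and then to $T_X$.

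First I would argue that bigness of $\zeta$ restricts to a general fiber. Suppose for contradiction that $T_X$ is big, i.e. $\zeta$ is big. The composition $g:=p\circ\Pi\colon\PP(T_X)\to C$ is surjective onto the curve $C$, and for general $c\in C$ its fiber is $g^{-1}(c)=\Pi^{-1}(F)=\PP(T_X|_F)$. By Kodaira's lemma I may write $m\zeta\sim A+E$ with $A$ ample and $E$ effective; since only finitely many fibers of $g$ can lie in $\operatorname{Supp}(E)$, for general $c$ the fiber $\Pi^{-1}(F)$ is not contained in $\operatorname{Supp}(E)$, so
\[
m\,\zeta|_{\Pi^{-1}(F)}\ \sim\ A|_{\Pi^{-1}(F)}+E|_{\Pi^{-1}(F)}
\]
is ample plus effective, hence big. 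Thus $\zeta|_{\Pi^{-1}(F)}=\Oo_{\PP(T_X|_F)}(1)$ is big, i.e. the rank-$3$ bundle $T_X|_F$ is big on $F$.

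Next I would exploit the extension structure on the fiber. As $F$ is a smooth fiber of $p$, its normal bundle is trivial, $N_{F/X}\cong\Oo_F$, so the tangent sequence restricts to $0\to T_F\to T_X|_F\to\Oo_F\to 0$. Taking symmetric powers, $\Sym^m(T_X|_F)$ carries a filtration whose graded pieces are $\Sym^{j}(T_F)$ for $j=0,1,\ldots,m$, and subadditivity of $h^0$ along the filtration gives
\[
h^0\!\bigl(F,\Sym^m(T_X|_F)\bigr)\ \le\ \sum_{j=0}^{m} h^0\!\bigl(F,\Sym^{j}(T_F)\bigr).
\]
Since $F$ has degree $d\le 4$, $T_F$ is not big by \cite[Theorem~1.4]{HLS}, and because $\dim\PP(T_F)=3$ this means $h^0(F,\Sym^{j}T_F)=o(j^{3})$. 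A Cesàro-type summation then yields $\sum_{j=0}^{m}h^0(F,\Sym^{j}T_F)=o(m^{4})$, so $h^0(F,\Sym^m(T_X|_F))=o(m^{4})$. As $\dim\PP(T_X|_F)=4$, this forces $\operatorname{vol}\bigl(\Oo_{\PP(T_X|_F)}(1)\bigr)=0$, contradicting the bigness obtained above. Hence $T_X$ is not big.

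I expect the main obstacle to be the first step: bigness need not restrict to an arbitrary subvariety, and one genuinely uses that the $\Pi^{-1}(F)$ are the fibers of a morphism \emph{to a curve}, together with Kodaira's lemma, to keep the restriction big for general $c$. The remaining steps are the purely numerical comparison of Hilbert-type growth through the symmetric-power filtration, which is routine once the del Pezzo surface case of \cite{HLS} is invoked; the only care needed there is the elementary fact that an $o(j^3)$ sequence has $o(m^4)$ partial sums.
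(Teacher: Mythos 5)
Your proof is correct, and it reaches the conclusion by a genuinely different route than the paper in the key middle step. The outer shell is the same: like you, the paper assumes $\zeta$ is big and derives a contradiction by showing bigness would descend to $\Oo_{\PP(T_X|_F)}(1)$ on a general fiber $F$ (the paper phrases this via the birational map defined by $|m\zeta|$ rather than your Kodaira's-lemma decomposition $m\zeta\sim A+E$, but the content is identical). The difference is in how non-bigness of $T_X|_F$ is established. You treat the surface result as a black box --- non-bigness of $T_F$ from \cite[Theorem~1.4]{HLS} --- and transfer it through the untwisted filtration of $\Sym^m(T_X|_F)$ by asymptotic section counting: subadditivity of $h^0$ plus the Ces\`aro estimate turning $o(j^3)$ into $o(m^4)$, then volume zero on the $4$-fold $\PP(T_X|_F)$. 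The paper instead proves a stronger, effective intermediate statement: $H^0(S,\Sym^m T_S(K_S))=0$ for all $m>0$ on any del Pezzo surface $S$ of degree $d\le 4$ (for $d=4$ via the five pairs of conic pencils and their total dual VMRTs, following the proof of \cite[Proposition~3.5]{HLS}; for $d\le 3$ by pushing forward along a contraction to a degree-$4$ del Pezzo), and then propagates this exact vanishing to $H^0(\Sym^m T_X|_S(K_S))=0$ by induction on the twisted sequence
\[
0\to \Sym^m T_S(K_S)\to \Sym^m T_X|_S(K_S)\to \Sym^{m-1}T_X|_S(K_S)\to 0,
\]
which gives non-bigness of $T_X|_S$ with an anticanonical margin. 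Your argument buys brevity and softness: it needs only the qualitative statement from \cite{HLS}, no anticanonical twist, and no re-run of the VMRT machinery; the cost is that it is purely asymptotic. The paper's argument buys a sharper fiberwise conclusion (exact vanishing of all twisted symmetric powers, not just volume zero), stays within the total-dual-VMRT framework used throughout the paper, and is self-contained in technique rather than quoting the surface classification as a black box. Both proofs are valid; only minor care is needed in your first step to note that a component of $\operatorname{Supp}(E)$ either dominates $C$ (hence meets a general fiber properly) or is contained in finitely many fibers, which you implicitly do.
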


\begin{proof}
We first prove the following claim.

\smallskip
\noindent \underline{Claim.}
If $S$ is a smooth del Pezzo surface of degree $d\leq 4$, then $H^0(S,\Sym^m T_S(K_S))=0$ for all $m>0$.
\smallskip

\vspace{-1.5em} 
Assume that $d=4$.
Suppose that $H^0(S,\Sym^m T_S(K_S))\neq 0$ for some $m>0$.
Then we can take the smallest $m>0$ with $H^0(S,\Sym^m T_S(K_S))\neq 0$.
Let $D$ be an effective divisor on $\PP(T_S)$ such that $D\sim m\xi+\Lambda^*K_S$ for the tautological class $\xi=\Oo_{\PP(T_S)}(1)$ of $\Lambda:\PP(T_S)\to S$.
Following the proof of \cite[Proposition~3.5]{HLS}, there exist $5$ pairs $|\ell_{i1}|$ and $|\ell_{i2}|$ of pencils of conics on $S$ such that
\[
[\ell_{i1}]+[\ell_{i2}]\sim -K_S
\ \ \text{and}\ \ 
[\breve\Cc_{i1}]+[\breve\Cc_{i2}]\sim 2\xi
\]
where $\breve\Cc_{i1}$ and $\breve\Cc_{i2}$ are the total dual VMRTs on $\PP(T_S)$ which are respectively associated to the pencils $|\ell_{i1}|$ and $|\ell_{i2}|$ for $i=1,\,\ldots,\,5$.
Then, for a minimal section $\widetilde{\ell_{ij}}\subseteq\PP(T_X|_{\ell_{ij}})\subseteq\PP(T_X)$ of $\ell_{ij}$, we have 
\[
D.\widetilde{\ell_{ij}}
=(m\xi+\Lambda^*K_S).\widetilde{\ell_{ij}}
=K_S.\ell_{ij}
<0.
\]
So $\breve\Cc_{ij}$ are contained in the support of $D$ for all $i=1,\,\ldots,\,5$ and $j=1,\,2$ as $\widetilde{\ell_{ij}}$ cover $\breve\Cc_{ij}$.
Thus there is an effective divisor
\[
(m-10)\xi+\Lambda^*K_S
=D-\sum_{i=1}^5(\breve\Cc_{i1}+\breve\Cc_{i2})
\]
on $\PP(T_S)$, and it contradicts the minimality of $m>0$.
This proves the claim for $d=4$.

Assume that $d\leq 3$.
Then, by \cite[Theorem 1.2]{HLS}, $T_S$ is not pseudoeffective, so $H^0(S,\Sym^m T_S)=0$ for all $m>0$, and hence $H^0(S,\Sym^m T_S(K_S))=0$ for all $m>0$ because $-K_S$ is effective.
This proves the claim for $d\leq 3$.
\smallskip

Let $S=X_t$ be a general fiber of $p:X\to C$, which is a smooth del Pezzo surface of degree $d\leq 4$.
By twisting $\Oo_S(K_S)$ after taking the symmetric power to the exact sequence
\[
0
\to T_S
\to T_X|_S
\to \Oo_S
\to 0,
\]
we obtain the following exact sequence on $S$.
\[
0
\to \Sym^m T_S(K_S)
\to \Sym^m T_X|_S(K_S)
\to \Sym^{m-1} T_X|_S(K_S)
\to 0
\]
Note that $H^0(S,K_S)=0$.
Then, by induction on $m>0$, it follows that $H^0(S,\Sym^{m}T_X|_S(K_S))=0$ for all $m>0$ due to the claim.
Thus $T_X|_S$ is not big.

Suppose that $T_X$ is big.
Consider the family $\{\PP(T_X|_{X_t})\}_{t\in C}$ of codimension $1$ subvarieties of $\PP(T_X)$ which cover $\PP(T_X)$, i.\,e. $\PP(T_X)=\bigcup_{t\in C}\PP(T_X|_{X_t})$.
Notice that $\xi=\zeta|_{X_t}$ is the tautological class of $\PP(T_X|_{X_t})$.
Since $\zeta$ is big, $|m\zeta|$ gives a birational map $\PP(T_X)\dashrightarrow\PP^N$ for $m\gg 1$, so $|m\xi|$ induces a birational map $\PP(T_X|_{X_t})\dashrightarrow\PP^M$, and hence $T_X|_{X_t}$ is big for general $t\in C$, a contradiction.
Therefore, $T_X$ is not big.
\end{proof}

\begin{Rmk}
\label{del_Pezzo_fibration}
Let $f:X=\Bl_\Gamma Q\to Q$ be the blow-up of a smooth quadric threefold $Q\subseteq\PP^4$ along a smooth curve $\Gamma$ of degree $8$ and genus $5$ (No. 7 in \cite[Table~2]{MM}).
Then the other extremal contraction of $X$ is given by a fibration $p: X\to \PP^1$ whose general fiber is a smooth del Pezzo surface of degree $4$.
By Proposition~\ref{fibration}, $T_X$ is not big.
\end{Rmk}

\section{Strict Transforms of Total Dual VMRTs}

In this section, we present some criteria to determine the bigness of $T_X$ in the case when $X$ is an imprimitive Fano threefold, i.\,e., $X$ is isomorphic to the blow-up $f: X=\Bl_\Gamma Z\to Z$ of a Fano threefold $Z$ along a smooth curve $\Gamma$.
We first observe a relation between the bigness of $T_X$ and $T_Z$, and then investigate some cases when $Z$ is $\PP^3$, a smooth quadric threefold $Q\subseteq \PP^4$, or the smooth del Pezzo threefold $V_5\subseteq\PP^6$ of degree $5$.

\medskip

Let $\Phi: \PP(T_Z)\to Z$ and $\widetilde{\Phi}: \PP(f^*T_Z)\to X$ be the natural projections.
We denote by $\eta=\Oo_{\PP(T_Z)}(1)$ and $\widetilde\eta=\Oo_{\PP(f^*T_Z)}(1)$.
Then $\widetilde{\eta}={\widetilde{f}}^*\eta$ for the natural morphism $\widetilde{f}:\PP(f^*T_Z)\to\PP(T_Z)$ induced by the blow-up $f:X\to Z$.

Note that we have an exact sequence
\[
0\to T_X\to f^*T_Z\to \iota_*T_{D/\Gamma}(D)\to 0
\]
on $X$ where $T_{D/\Gamma}(D)$ is a locally free sheaf on the exceptional divisor $D$ with the embedding $\iota:D\hookrightarrow X$.

Maruyama's description of the elementary transformation \cite{Mar} yields the exact sequence
\[
0
\to \widetilde{\eta}\otimes \Ii_S
\to \widetilde{\eta}
\to \Oo_S(\widetilde{\eta})
\to 0
\]
on $\PP(f^*T_Z)$ where
\[
{\widetilde{\Phi}}_*(\widetilde{\eta}\otimes\Ii_S)=T_X,\quad
{\widetilde{\Phi}}_*\widetilde{\eta}=f^*T_Z,
\]
and $S\cong \PP_D(T_{D/\Gamma}(D))$ is the subvariety of $\PP(f^*T_Z)$ defined by the quotient $f^*T_Z\to \iota_*T_{D/\Gamma}(D)$.
Let $\beta$ be the blow-up of $\PP(f^*T_Z)$ along $S$.
Then there exists the commutative diagram
\begin{equation}
\label{diagram:blow-up} 
\begin{gathered}
\xymatrix@R=1.5pc@C=1pc{
& & & \Bl_S\PP(f^*T_Z) \ar[dl]_\beta \ar[dr]^\alpha &\\ 
\PP(T_Z) \ar[d]_\Phi & & \PP(f^*T_Z) \ar[d]_{\widetilde{\Phi}} \ar@{-->}[rr] \ar[ll]_{\widetilde{f}} & & \PP(T_X) \ar[d]^\Pi\\
Z & & X \ar@{=}[rr] \ar[ll]_f & & X
}\end{gathered}
\end{equation}
where $\alpha$ is the blow-down of $\Bl_S\PP(f^*T_Z)$ along the strict transform of $\widetilde{\Phi}^*D$.
So, if we denote by $R$ the exceptional divisor of $\beta$, then
\[
\beta^*\widetilde{\eta}-R \sim \alpha^*\zeta
\]
on $\Bl_S\PP(f^*T_Z)$.
Therefore, we have the following linear equivalence.
\[
\alpha_*\beta^*\widetilde{\eta}\sim \zeta+\Pi^*D
\]

\begin{Lem}[{cf. \cite[Corollary~2.3]{HLS}}]
\label{pullbackbig}
Let $E$ be a vector bundle on $X$.
Then $E$ is pseudoeffective if and only if $E\otimes \Oo_X(H)$ is big for every big $\QQ$-divisor $H$ on $X$.
\end{Lem}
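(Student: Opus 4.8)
The plan is to rephrase the statement on the projectivized bundle and then argue with the convex geometry of the pseudoeffective cone, using that the tautological class is relatively ample. Write $p\colon\PP(E)\to X$ for the projection and $\xi=\Oo_{\PP(E)}(1)$, so that, by definition, $E$ is pseudoeffective exactly when $\xi$ lies in $\overline{\mathrm{Eff}}(\PP(E))$. Under the canonical isomorphism $\PP(E\otimes\Oo_X(H))\cong\PP(E)$ the tautological class of $E\otimes\Oo_X(H)$ becomes $\xi+p^*H$, so the assertion to prove is: $\xi$ is pseudoeffective if and only if $\xi+p^*H$ is big for every big $\QQ$-divisor $H$ on $X$. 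Throughout I would use the elementary fact that, in the real N\'eron--Severi space $\mathrm{N}^1(\PP(E))_{\mathbb{R}}$, the sum of a pseudoeffective class and a big class is again big (a point of a convex cone plus an interior point lies in the interior, since the big cone is the interior of $\overline{\mathrm{Eff}}(\PP(E))$), together with the closedness of $\overline{\mathrm{Eff}}(\PP(E))$.

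For the reverse implication I would specialize $H$ to $\varepsilon A$ for a fixed ample divisor $A$ on $X$ and a small positive rational $\varepsilon$. Since $\varepsilon A$ is a big $\QQ$-divisor, the hypothesis yields that $\xi+\varepsilon\,p^*A$ is big, hence pseudoeffective, for every such $\varepsilon$. Letting $\varepsilon\to 0^+$ and using that $\overline{\mathrm{Eff}}(\PP(E))$ is closed, the limit class $\xi$ is pseudoeffective, as required.

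The forward implication is the main point, and its crux is to upgrade \emph{pseudoeffective} to \emph{big} after adding the pullback of an ample class; this is exactly where the relative ampleness of $\xi$ enters. First I would fix, once and for all, an ample divisor $A_0$ on $X$ with $\xi+p^*A_0$ ample on $\PP(E)$, which exists because $\xi$ is $p$-ample. Now let $H$ be an arbitrary big $\QQ$-divisor; by Kodaira's lemma write $H\sim_{\QQ}A+D$ with $A$ an ample $\QQ$-divisor and $D$ effective. I claim $\xi+p^*A$ is big: for a sufficiently small rational $t>0$ the class $A/t-A_0$ is ample, so $\xi+p^*(A/t)=(\xi+p^*A_0)+p^*(A/t-A_0)$ is ample, and the identity
\[
\xi+p^*A=(1-t)\,\xi+t\bigl(\xi+p^*(A/t)\bigr)
\]
exhibits $\xi+p^*A$ as the sum of the pseudoeffective class $(1-t)\xi$ and the ample (hence big) class $t\bigl(\xi+p^*(A/t)\bigr)$, so it is big. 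Finally $\xi+p^*H\sim_{\QQ}(\xi+p^*A)+p^*D$ is the sum of a big class and the pseudoeffective class $p^*D$, and is therefore big.

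I expect the only genuine obstacle to be this last upgrade in the forward direction: the limiting argument for the converse and the convexity observation are formal, so the essential geometric input is the single relative-ampleness fact that produces $A_0$ and lets the convex-combination trick convert a pseudoeffective $\xi$ into a big $\xi+p^*A$.
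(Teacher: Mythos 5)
Your proposal is correct and takes essentially the same route as the paper: the reverse implication is the same limiting argument using closedness of the pseudoeffective cone, and the forward implication uses Kodaira's lemma plus relative ampleness of the tautological class exactly as the paper does. Indeed, your convex combination $\xi+p^*A=(1-t)\,\xi+t\bigl(\xi+p^*(A/t)\bigr)$ with $t=1/m$ is precisely the paper's identity $m(\zeta+\Pi^*H)=(\zeta+m\Pi^*A)+m\Pi^*N+(m-1)\zeta$ after adding the effective part, so the two proofs differ only in presentation.
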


\begin{proof}
Let $\zeta=\Oo_{\PP(E)}(1)$ be the tautological class of $\PP(E)$.
Assume that $\zeta+\Pi^*H$ is big for every big $\QQ$-divisor $H$ on $X$.
If $H$ is a big $\QQ$-divisor on $X$, then $\varepsilon H$ is big for any $\varepsilon>0$.
So we can observe that $\zeta$ is given by the limit of a family of big $\QQ$-divisors $\zeta+\varepsilon\Pi^*H$ as $\varepsilon\to 0$.
Thus $\zeta$ is pseudoeffective.

Conversely, assume that $\zeta$ is pseudoeffective, and let $H$ be an arbitrary big $\QQ$-divisor on $X$.
Then $H\sim A+N$ for some ample divisor $A$ and effective divisor $N$.
Because $\zeta+m\Pi^*A$ is ample on $\PP(T_X)$ for sufficiently large $m\gg 0$, we can write
\[
m(\zeta+\Pi^*H)
=(\zeta+m\Pi^*A) + m\Pi^*N + (m-1)\zeta
\]
as the sum of a big divisor $(\zeta+m\Pi^*A)+m\Pi^*N$ and a pseudoeffective divisor $(m-1)\zeta$ on $\PP(T_X)$.\linebreak
Thus $\zeta+\Pi^*H$ is big.
\end{proof}

\begin{Lem}[{cf. \cite[Corollary~2.4]{HLS}}]
\label{bigness_of_blow-up}
Let $f: X\to Z$ be the blow-up of a smooth projective variety $Z$ along a smooth subvariety.
If $T_X$ is big, then $T_Z$ is big.
Moreover, if $T_X$ is pseudoeffective, then $T_Z$ is pseudoeffective.
\end{Lem}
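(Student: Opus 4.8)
The plan is to route everything through the commutative diagram \eqref{diagram:blow-up} and the linear equivalence $\beta^*\widetilde\eta-R\sim\alpha^*\zeta$ recorded just above, where $R$ is the effective $\beta$-exceptional divisor and the three maps $\alpha$, $\beta$, $\widetilde f$ are birational morphisms between smooth projective varieties. Bigness (resp. pseudoeffectiveness) of $T_Z$ is by definition bigness (resp. pseudoeffectiveness) of $\eta$ on $\PP(T_Z)$; since $\widetilde f\colon\PP(f^*T_Z)\to\PP(T_Z)$ is birational and $\widetilde\eta=\widetilde f^*\eta$, this is in turn equivalent to the same property for $\widetilde\eta$ on $\PP(f^*T_Z)$. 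So it is enough to transport positivity of $\zeta$ across the diagram to $\widetilde\eta$, and throughout I would freely use that $g^*$ and $g_*$ preserve both bigness (volume is a birational invariant) and pseudoeffectiveness for a birational morphism $g$ of smooth projective varieties.

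First I would twist the basic relation. For a $\QQ$-divisor $H$ on $Z$, using the two commuting squares $\Pi\circ\alpha=\widetilde\Phi\circ\beta$ and $f\circ\widetilde\Phi=\Phi\circ\widetilde f$ one computes
\[
\beta^*\widetilde f^*(\eta+\Phi^*H)\sim\alpha^*(\zeta+\Pi^*f^*H)+R
\]
on $\Bl_S\PP(f^*T_Z)$. Taking $H=0$ settles bigness: if $\zeta$ is big then $\alpha^*\zeta$ is big, hence $\alpha^*\zeta+R=\beta^*\widetilde\eta$ is big as the sum of a big class and an effective one; descending first along $\beta$ and then along $\widetilde f$ yields that $\eta$ is big, that is, $T_Z$ is big.

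For pseudoeffectiveness I would combine the same displayed relation with Lemma \ref{pullbackbig}. Let $H$ be an arbitrary big $\QQ$-divisor on $Z$; then $f^*H$ is a big $\QQ$-divisor on $X$, so the hypothesis that $T_X$ is pseudoeffective gives, via Lemma \ref{pullbackbig}, that $\zeta+\Pi^*f^*H$ is big. The displayed equivalence then exhibits $\beta^*\widetilde f^*(\eta+\Phi^*H)$ as big, and descending along $\beta$ and $\widetilde f$ shows that $\eta+\Phi^*H$, i.e. $T_Z\otimes\Oo_Z(H)$, is big. Since $H$ was an arbitrary big $\QQ$-divisor on $Z$, the converse direction of Lemma \ref{pullbackbig} forces $T_Z$ to be pseudoeffective.

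There is little genuine difficulty here: the only point requiring care is the twist computation, namely checking that $\alpha^*\Pi^*f^*H$ and $\beta^*\widetilde\Phi^*f^*H$ (and likewise $\widetilde\Phi^*f^*H$ and $\widetilde f^*\Phi^*H$) agree through the two commuting squares of \eqref{diagram:blow-up}. Once that identity is in place both assertions follow uniformly, the bigness statement being exactly the $H=0$ specialization of the pseudoeffective argument. As a sanity check one may also see the bigness implication directly from the inclusion $T_X\hookrightarrow f^*T_Z$: it induces $\Sym^mT_X\hookrightarrow f^*\Sym^mT_Z$, and the projection formula gives $H^0(X,f^*\Sym^mT_Z)\cong H^0(Z,\Sym^mT_Z)$, so $h^0(\Sym^mT_X)\le h^0(\Sym^mT_Z)$; as $\PP(T_X)$ and $\PP(T_Z)$ have equal dimension, maximal growth on the left forces it on the right.
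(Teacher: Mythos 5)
Your proof is correct and follows essentially the same route as the paper's: the same diagram \eqref{diagram:blow-up}, the relation $\beta^*\widetilde{\eta}-R\sim\alpha^*\zeta$ together with effectivity of $R$ for the bigness implication, and Lemma~\ref{pullbackbig} applied to an arbitrary big $\QQ$-divisor on $Z$ for the pseudoeffectivity implication; you merely write out explicitly the twisted identity $\beta^*\widetilde{f}^*(\eta+\Phi^*H)\sim\alpha^*(\zeta+\Pi^*f^*H)+R$ that the paper compresses into ``the same argument as before.'' Your closing sanity check via $\Sym^m T_X\hookrightarrow f^*\Sym^m T_Z$ and the projection formula is a valid independent confirmation of the bigness statement, though not needed for the argument.
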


\begin{proof}
We continue to use the notation above.
Note that $\alpha$, $\beta$, and $\widetilde{f}$ are birational morphisms in \eqref{diagram:blow-up}.\linebreak
Thus we have the following implication.
\[
\text{$\zeta$ is big}
\ \ \Leftrightarrow\ \
\text{$\alpha^*\zeta$ is big}
\ \ \Rightarrow\ \ 
\text{$\alpha^*\zeta+R \sim (\widetilde{f}\circ \beta)^*\eta$ is big}
\ \ \Leftrightarrow\ \ 
\text{$\eta$ is big}
\]

Now, assume that $T_X$ is pseudoeffective and let $H'$ be a big $\QQ$-divisor on $Z$.
By Lemma~\ref{pullbackbig}, $T_X\otimes\Oo_X(f^*H')$ is big as $f^*H'$ is big on $X$.
Then we can show that $T_Z\otimes \Oo_Z(H')$ is big on $Z$ using the same argument as before.
Since $H'$ was arbitrary, by Lemma~\ref{pullbackbig} again, $T_Z$ is pseudoeffective.
\end{proof}

\begin{Rmk}
\label{blow-ups_of_del_pezzo_manifold}
Let $f: X=\Bl_{\Gamma}V_i\to V_i$ be the blow-up of a smooth del Pezzo threefold $V_i$ of degree $i$ for $i=1,\,2,\,3,\,4$.
Due to \cite[Theorem~1.5]{HLS}, $T_{V_i}$ is not big.
Thus by Lemma \ref{bigness_of_blow-up}, $T_X$ is not big. 
There\linebreak are $7$ deformation types of such Fano threefolds: No. 1, 3, 5, 10, 11, 16, and 19 in \cite[Table~2]{MM}.
\end{Rmk}

\begin{Rmk}
\label{MM:2-26_and_31}
Let $f: X=\Bl_\Gamma V_5\to V_5$ be the blow-up of the smooth del Pezzo threefold $V_5$ of degree~$5$ along a line $\Gamma$ (No. 26 in \cite[Table~2]{MM}).
Then the extremal contractions of $X$ are given by the blow-ups $f_1: X\to Z_1$ and $f_2: X\to Z_2$ along smooth curves where $Z_1=Q$ is a smooth quadric threefold and $Z_2=V_5$.
We denote by $H_i={f_i}^*\Oo_{Z_i}(1)$, and $D_i$ the exceptional divisor of $f_i: X\to Z_i$.
Then, from \cite[Theorem~5.1]{MM2},
\vspace{-0.3em} 
\[
K_X\sim -H_1-H_2=-3H_1+D_1=-2H_2+D_2,\ \ 
D_1\sim 2H_1-H_2,
\ \ 
D_2\sim -H_1+H_2.
\vspace{-0.3em} 
\]

Let $\alpha_i$, $\beta_i$, $\widetilde{f_i}$, $\Phi_i$, $\eta_i$ be defined as before.
As $\Dd_1=2\eta_1-2\Phi_1^*\Oo_{Q}(1)$ on $\PP(T_Q)$ and $\Dd_2=3\eta_2-\Phi_2^*\Oo_{V_5}(1)$ on $\PP(T_{V_5})$ are effective (for $\Dd_1$, see \cite[Theorem~5.8]{Sha} or Proposition~\ref{quadric}, and for $\Dd_2$, see \cite[Theorem~5.4]{HLS}), $\Ee_1={\alpha_1}_*\beta_1^*\widetilde{f_1}^*\Dd_1$
and $\Ee_2={\alpha_2}_*\beta_2^*\widetilde{f_2}^*\Dd_2$ are effective divisors on $\PP(T_X)$, whose linear classes are given by
\begin{align*}
[\Ee_1]
&\sim 2\zeta-2\Pi^*H_1+2\Pi^*D_1
=2\zeta+2\Pi^*H_1-2\Pi^*H_2,
\\
[\Ee_2]
&\sim 3\zeta-\Pi^*H_2+3\Pi^*D_2
=3\zeta-3\Pi^*H_1+2\Pi^*H_2.
\end{align*}
So we can write a multiple of $\zeta$ by a positive linear combination of effective divisors on $\PP(T_X)$ as
\[
5\zeta=[\Ee_1]+[\Ee_2]+\Pi^*H_1.
\]
Thus $\zeta$ is big on $\PP(T_X)$.
That is, $T_X$ is big.

Let $f: X=\Bl_\Gamma Q\to Q$ be the blow-up of a smooth quadric threefold $Q\subseteq\PP^4$ along a line $\Gamma$ (No. 31 in \cite[Table~2]{MM}).
We denote by $H=f^*\Oo_Q(1)$, and $D$ the exceptional divisor of $f: X\to Q$.
By \cite{SW}, $X$ is isomorphic to the ruled variety $\pi: X=\PP(E(1))\to\PP^2$ for the vector bundle $E$ of rank $2$ which fits into the exact sequence
\vspace{-0.3em} 
\[
0
\to \Oo_{\PP^2}
\to E
\to \Ii_x
\to 0
\]
for the ideal sheaf $\Ii_x$ of a point $x$ on $\PP^2$.
Then, for $h=\pi^*\Oo_{\PP^2}(1)$,
\vspace{-0.3em} 
\[
-K_X\sim 3H-D=2H+h,\ \ 
h\sim H-D,\ \ 
D\sim H-h.
\vspace{-0.3em} 
\]

Let $\breve\Cc$ be the total dual VMRT associated to the family of fibers of $\pi:X\to \PP^2$ and $\Ee=\alpha_*\beta^*{\widetilde{f}}^*\Dd$ for $\Dd=2\eta-2\Phi^*\Oo_Q(1)$ on $\PP(T_Q)$ as in the previous case.
Then, by Proposition~\ref{conic_bundle_formula}, we have
\begin{align*}
[\breve\Cc]
&\sim\zeta+\Pi^*(K_X-\pi^*K_{\PP^2})
=\zeta-2\Pi^*H+2\Pi^*h,\\
[\Ee]
&\sim2\zeta-2\Pi^*H+2\Pi^*D
=2\zeta-2\Pi^*h.
\end{align*}
So we can write a multiple of $\zeta$ by a positive linear combination of effective divisors on $\PP(T_X)$ as
\vspace{-0.2em} 
\[
3\zeta
=[\breve\Cc]+[\Ee]+2\Pi^*H.
\]
Thus $\zeta$ is big on $\PP(T_X)$.
That is, $T_X$ is big.
\end{Rmk}

\begin{Rmk}
\label{divisor_from_projective_space}
Let $f: X=\Bl_{\Gamma}\PP^3\to \PP^3$ be the blow-up of the projective space $\PP^3$ along a smooth curve $\Gamma$.
We denote by $H=f^*\Oo_{\PP^3}(1)$, and $D$ the exceptional divisor of $f: X\to \PP^3$.
From the exact sequence
\[
0
\to \Oo_{\PP(f^*T_{\PP^3})}(k\widetilde{\eta}-k\Pi^*H)\otimes\Ii_S
\to \Oo_{\PP(f^*T_{\PP^3})}(k\widetilde{\eta}-k\Pi^*H)
\to \Oo_S(k\widetilde{\eta}-k\Pi^*H)
\to 0
\]
on $\PP(f^*T_{\PP^3})$, we can observe that there exists an effective divisor $\Ee_0\sim k\widetilde{\eta}-kH$ on $\PP(f^*T_{\PP^3})$ which vanishes on $S$ for some $k>0$.
Indeed, $h^0(S^k(T_{\PP^3}(-1)))=\Theta(k^3)$ is observed from the exact sequence
\[
0
\to {\Oo_{\PP^3}(-1)}^{\oplus \genfrac(){0pt}{}{k+2}{3}}
\to {\Oo_{\PP^3}}^{\oplus \genfrac(){0pt}{}{k+3}{3}} \to
S^k(T_{\PP^3}(-1))
\to 0
\]
obtained by taking symmetric powers to the Euler sequence, whereas $h^0(\Oo_S(k\widetilde{\eta}-k\Pi^*H))=O(k^2)$ as $\Oo_S(k\widetilde{\eta}-k\Pi^*H)$ is a line bundle on the surface $S\cong\PP_D(T_{D/\Gamma}(D))$.
Thus we obtain an effective divisor
\[
[\Ee]
\sim k\alpha_*\beta^*\widetilde{\eta}-k\Pi^*H-m\Pi^*D
=k\zeta-k\Pi^*H+(k-m)\Pi^*D
\]
on $\PP(T_X)$ for some $m>0$, which is the strict transform of $\Ee_0$ from $\PP(f^*T_{\PP^3})$ to $\PP(T_X)$.

If $\Gamma$ is degenerate (No. 28 and 30 in \cite[Table~2]{MM} for instance), then $H-D$ is effective on $X$.
So we can write a multiple of $\zeta$ by a positive linear combination of effective divisors on $\PP(T_X)$ as
\[
k\zeta
=[\Ee]+k\Pi^*(H-D)+m\Pi^*D
\]
for some $k>0$ and $m>0$.
Thus $\zeta$ is big on $\PP(T_X)$.
That is, $T_X$ is big.

If $\Gamma$ is a twisted cubic curve (No. 27 in \cite[Table~2]{MM}), then, by \cite{SW}, $X$ is isomorphic to the\linebreak ruled variety $\pi: X=\PP(E(1))\to\PP^2$ for the vector bundle $E$ of rank $2$ which fits into the exact sequence
\[
0
\to \Oo_{\PP^2}(-1)^{\oplus 2}
\to {\Oo_{\PP^2}}^{\oplus 4}
\to E(1)
\to 0
\]
on $\PP^2$.
Then, for $h=\pi^*\Oo_{\PP^2}(1)$, 
\[
-K_X\sim 4H-D=2H+h,\ \ 
h\sim 2H-D,\ \
D\sim 2H-h.
\]

Let $\breve\Cc$ be the total dual VMRT on $\PP(T_X)$ associated to the family of fibers of $\pi:X\to \PP^2$.
Then, by Proposition~\ref{conic_bundle_formula}, we have
\[
[\breve\Cc]
\sim\zeta+\Pi^*(K_X-\pi^*K_{\PP^2})
=\zeta-2\Pi^*H+2\Pi^*h
=\zeta+2\Pi^*H-2\Pi^*D.
\]
So we can write a multiple of $\zeta$ by a positive linear combination of effective divisors on $\PP(T_X)$ as
\[
3k\zeta
=k[\breve\Cc]+2[\Ee]+2m\Pi^*D
\]
for some $k>0$ and $m>0$.
Thus $\zeta$ is big on $\PP(T_X)$.
That is, $T_X$ is big.
\end{Rmk}

\begin{Prop}
\label{quadric}
Let $f:X=\Bl_\Gamma Q\to Q$ be the blow-up of a smooth quadric threefold $Q\subseteq\PP^4$ along a smooth curve $\Gamma$.
Then there exists an irreducible and reduced effective divisor
\[
[\breve\Cc]
\sim 2\zeta-2\Pi^*H+2\Pi^*D
\]
on $\PP(T_X)$ which satisfies \eqref{condition} for $H=f^*\Oo_{Q}(1)$ and the exceptional divisor $D$ of $f:X\to Q$.
\end{Prop}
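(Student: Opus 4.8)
The plan is to realize $\breve\Cc$ as the strict transform, through the birational diagram \eqref{diagram:blow-up}, of the total dual VMRT $\breve\Dd$ on $\PP(T_Q)$ associated to the family of \emph{lines} on the quadric threefold $Q$. Every line $\ell\subseteq Q$ is standard, with $T_Q|_\ell\cong\Oo_{\PP^1}(2)\oplus\Oo_{\PP^1}(1)\oplus\Oo_{\PP^1}$, and the lines form a locally unsplit dominating family; by \cite[Proposition~5]{OSW} the fiber $\breve\Dd|_q\subseteq\PP(T_Q|_q)$ over a general $q\in Q$ is the projective dual of the VMRT $\Cc_q\subseteq\PP(\Omega_Q|_q)$. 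For the quadric threefold $\Cc_q$ is a smooth conic, so its dual $\breve\Dd|_q$ is again a smooth conic; hence $\breve\Dd$ is a conic bundle over $Q$ with irreducible reduced general fiber, and therefore an irreducible and reduced divisor whose class has the form $[\breve\Dd]\sim 2\eta+\Phi^*M$ with $M\in\ZZ\Oo_Q(1)$.

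First I would pin down $M=\Oo_Q(-2)$. The cleanest way is to use the projective second fundamental form of $Q\subseteq\PP^4$: since $Q$ is a quadric this is a nowhere-degenerate symmetric form $\Sym^2 T_Q\to N_{Q/\PP^4}=\Oo_Q(2)$, and its adjugate (available because it is nondegenerate at every point) is a nowhere-vanishing section of $\Sym^2 T_Q\otimes\Oo_Q(-2)=\Sym^2 T_Q(-2)$. This section cuts out exactly the dual conic bundle $\breve\Dd$ and forces $[\breve\Dd]\sim 2\eta-2\Phi^*\Oo_Q(1)$; equivalently one may quote \cite[Theorem~5.8]{Sha}.

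Next I would transport $\breve\Dd$ to $\PP(T_X)$ and compute. Taking $\breve\Cc$ to be the strict transform of $\breve\Dd$ through $\widetilde f$, $\beta$ and $\alpha$, and using $\widetilde f^*\eta=\widetilde\eta$ and $\widetilde f^*\Phi^*\Oo_Q(1)=\widetilde\Phi^*H$ together with the identity $\alpha_*\beta^*\widetilde\eta\sim\zeta+\Pi^*D$ recorded in the setup of \eqref{diagram:blow-up}, one obtains
\[
[\breve\Cc]\sim\alpha_*\beta^*\widetilde f^*\bigl(2\eta-2\Phi^*\Oo_Q(1)\bigr)\sim 2(\zeta+\Pi^*D)-2\Pi^*H=2\zeta-2\Pi^*H+2\Pi^*D.
\]
Irreducibility and reducedness are inherited from $\breve\Dd$ since $\widetilde f,\beta,\alpha$ are birational. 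Finally, property \eqref{condition} holds because $\breve\Dd$ is covered by the minimal sections $\widetilde\ell$ of lines $\ell\subseteq Q$, which satisfy $\eta.\widetilde\ell=0$; for a general such $\ell$ disjoint from $\Gamma$, its strict transform $\ell'\subseteq X$ is isomorphic to $\ell$ with $T_X|_{\ell'}\cong T_Q|_\ell$, so the corresponding minimal section $\widetilde{\ell'}$ satisfies $\zeta.\widetilde{\ell'}=0$, and these curves cover a dense subset of $\breve\Cc$.

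The hard part will be the bookkeeping of the strict transform near $D$: I must check that pulling $\breve\Dd$ back by $\widetilde f$ and passing through the blow-up $\beta$ and the blow-down $\alpha$ contributes exactly the correction $+2\Pi^*D$ and nothing more, i.e. that $\breve\Dd$ meets the center $S\cong\PP_D(T_{D/\Gamma}(D))$ of $\beta$ and the strict transform of $\widetilde\Phi^*D$ contracted by $\alpha$ with the expected multiplicities. Since $\breve\Dd$ is a smooth conic bundle this is a local computation along $\Phi^{-1}(\Gamma)$; as a safeguard, since $\breve\Cc$ restricts to a conic on a general fiber (so its $\zeta$-coefficient is forced to be $2$), one may instead fix the two remaining coefficients of $\Pi^*H$ and $\Pi^*D$ directly by intersecting $\breve\Cc$ with test curves lying over a line of $Q$ meeting $\Gamma$ once and over a fiber of $D\to\Gamma$, which determines the class independently of the transform bookkeeping.
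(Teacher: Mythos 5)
Your overall route coincides with the paper's: take the total dual VMRT $\breve\Dd$ on $\PP(T_Q)$ associated to the lines on $Q$, establish $[\breve\Dd]\sim 2\eta-2\Phi^*\Oo_Q(1)$, and define $\breve\Cc$ as the strict transform of $\breve\Dd$ through the diagram \eqref{diagram:blow-up}. Your derivation of the class of $\breve\Dd$ via the adjugate of the second fundamental form $\Sym^2 T_Q\to\Oo_Q(2)$ (an everywhere nondegenerate form, whose inverse is a nowhere-degenerate section of $\Sym^2 T_Q(-2)$ cutting out the dual conics fiberwise) is a correct alternative to the paper's argument, which instead fixes the coefficient by a dimension count on the family of minimal sections dominating $\breve\Dd$; the paper elsewhere also quotes \cite[Theorem~5.8]{Sha} for this same class. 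The identification of $\breve\Cc$ over $X\backslash D$ with $\breve\Dd$ over $Q\backslash\Gamma$, the inheritance of irreducibility and reducedness, and the verification of \eqref{condition} by minimal sections of strict transforms of lines disjoint from $\Gamma$ all match the paper.

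The genuine gap is exactly the step you label ``the hard part'' and then leave open. The identity $[\breve\Cc]\sim\alpha_*\beta^*\widetilde{f}^*\bigl(2\eta-2\Phi^*\Oo_Q(1)\bigr)$ is valid only when $\widetilde{f}^*\breve\Dd$ has multiplicity zero along the center $S\cong\PP_D(T_{D/\Gamma}(D))$ of $\beta$; in general the strict transform satisfies $[\breve\Cc]\sim 2\zeta-2\Pi^*H+(2-m)\Pi^*D$, where $m\geq 0$ is that multiplicity, so the proposition is precisely the assertion $m=0$, and nothing in your argument excludes $m>0$. Your fallback via test curves does not repair this: to evaluate $\breve\Cc$ against a section lying over a line meeting $\Gamma$, or over a fiber of $D\to\Gamma$, you must first decide whether that section lies in $\breve\Cc$ and with what local intersection behaviour along $\Pi^{-1}(D)$, which is the same information that $m$ encodes, so the ``bookkeeping'' is not avoided. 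The paper closes the step with a short geometric observation that your proof still needs: for $z\in\Gamma$ the map $\widetilde{f}$ sends $S|_{f^{-1}(z)}$ onto $\PP(N_{\Gamma|Q}|_z)\subseteq\PP(T_Q|_z)$, which is a \emph{line}; a line cannot be contained in the irreducible conic $\breve\Dd|_z$, hence $S\not\subseteq\widetilde{f}^*\breve\Dd$ and $m=0$. Supplying this argument (or an equivalent local computation along $\Phi^{-1}(\Gamma)$) is what separates your proposal from a complete proof.
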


\begin{proof}
Let $\Mm$ be the family of lines $\elll$ on $Q$ which is known to be $\Mm\cong\PP^3$.
Then $N_{\elll|Q}\cong\Oo_{\PP^1}(1)\oplus\Oo_{\PP^1}$, and $\Mm$ is a family of {\color{black} unbendable} rational curves on $Q$.
Let $\breve\Dd$ be the total dual VMRT on $\PP(T_Q)$ associated to $\Mm$.
As $Q$ is homogeneous and the VMRT $\Dd_{z}$ of $\Mm$ at $z\in Q$ is irreducible and reduced, so is $\breve\Dd|_z$ for all $z\in Q$, and hence $\breve\Dd$ is irreducible and reduced.

Let $H'=\Oo_Q(1)$ and $\eta=\Oo_{\PP(T_Q)}(1)$ be the tautological class of $\Phi: \PP(T_Q)\to Q$.
It is known that 
\[
[\breve\Dd]\sim 2\eta-2\Phi^*H'
\]
(see \cite[Proof of Proposition~3.1]{SCW} for instance). Therefore we can write
\[
[\widetilde{f}^*\breve\Dd]=2\widetilde{f}^*\eta-2\widetilde{\Phi}^*H
\]
where $\widetilde{f}: \PP(f^*T_Q)\to \PP(T_Q)$ and $\widetilde{\Phi}:\mathbb P(f^*T_Q)\rightarrow X$ are the natural morphisms induced by  $f: X\to Q$ and $\Phi: \PP(T_Q)\to Q$.

Let $\Kk$ be the family of rational curves on $X$ containing the strict transform $[\ell]$ of $[\elll]$ for general $[\elll]\in\Mm$.
Since a general member $[\elll]\in\Mm$ does not intersect with the blow-up center $\Gamma$, $\Kk$ is a family of {\color{black} unbendable} rational curves on $X$.

Let $\breve\Cc$ be the total dual VMRT  on $\PP(T_X)$ associated to $\Kk$.
Because the total dual VMRT $\breve\Dd$ is given by the union of minimal sections of rational curves of $\Mm$, $\breve\Cc|_U$ and $\breve\Dd|_V$ coincide under the isomorphism $\PP(T_X|_{U})\cong \PP(T_Q|_{Z})$ over $U=X\backslash D$ and $V=Q\backslash\Gamma$.
As the closure is uniquely determined by a subset out of codimension $1$, $\breve\Cc$ is the strict transform of $\breve\Dd$ from $\PP(T_Q)$ to $\PP(T_X)$.
Moreover, $\breve\Cc$ is the strict transform of $\widetilde{f}^*\breve\Dd$ from $\PP(f^*T_Q)$ to $\PP(T_X)$.
As $\breve\Dd$ is irreducible and reduced, so is its strict transform~$\breve\Cc$.

Let $m\geq 0$ be the order of vanishing of $\widetilde{f}^*\breve\Dd$ on $S\cong\PP_{D}(T_{D/\Gamma}(D))\subseteq\PP(f^*T_Q)$.
Then the strict transform $\breve\Cc$ of $\widetilde{f}^*\breve\Dd$ from $\PP(f^*T_Q)$ to $\PP(T_X)$ satisfies
\[
[\breve\Cc]
\sim \alpha_*\beta^*(2\widetilde{\eta}-2\widetilde{\Phi}^*H)-m\Pi^*D
=2\zeta-2\Pi^*H+(2-m)\Pi^*D.
\]
We will complete the proof by showing that $m=0$, which is equivalent to showing that $S\not\subseteq \widetilde{f}^*\breve\Dd$.

Let $z\in \Gamma$.
Then $\PP(f^*T_Q|_{f^{-1}(z)})\cong \PP(T_Q|_z)\times f^{-1}(z)$.
Moreover, $\PP(T_{D/\Gamma}(D)|_{f^{-1}(z)})$ is a subvariety of $\PP(f^*T_Q|_{f^{-1}(z)})$ whose image is $\widetilde{f}(\PP(T_{D/\Gamma}(D)|_{f^{-1}(z)}))=\PP(N_{\Gamma|Q}|_z)$ in $\PP(T_Q|_z)$.
Since the line $\PP(N_{\Gamma|Q}|_z)$ is not contained in the conic $\breve\Dd|_z\subseteq\PP(T_Q|_z)$, ${\widetilde{f}}^*\breve\Dd|_z$ does not contain  $\PP(T_{D/\Gamma}(D)|_{f^{-1}(z)})=S|_{f^{-1}(z)}$ (see Figure \ref{fig:strict_transform}).
Thus we can conclude that $S\not\subseteq \widetilde{f}^*\breve\Dd$.
\end{proof}

\begin{figure}

\begin{tikzpicture}[x=(0:1.0), y=(90:0.3), z=(52:0.3),
                    line cap=round, line join=round,
                    axis/.style={black, thick,->},
                    vector/.style={>=stealth,->}]

\pgfmathsetmacro{\op}{0.5}

\pgfmathsetmacro{\wi}{10}
\pgfmathsetmacro{\he}{10}
\pgfmathsetmacro{\le}{5}
\pgfmathsetmacro{\ti}{2}
\pgfmathsetmacro{\siw}{1}
\pgfmathsetmacro{\sih}{3}
\pgfmathsetmacro{\A}{0.15}
\pgfmathsetmacro{\AA}{0.16}
\pgfmathsetmacro{\B}{0.5}
\pgfmathsetmacro{\C}{0.83}
\pgfmathsetmacro{\D}{0.87}
\pgfmathsetmacro{\DD}{0.88}
\pgfmathsetmacro{\E}{0.95}

\draw[] (-\ti-\siw,-\sih,0)--(-\ti+\siw,-\sih,0)--(-\ti+\siw,-\sih,\wi)--(-\ti-\siw,-\sih,\wi)--cycle;
\draw[thick] (-\ti,-\sih,0)--(-\ti,-\sih,10);
\draw[draw=none, fill=gray!50!white, fill opacity=0.3] (-\ti,-\sih,5)--(-\ti,0,0)--(-\ti,0,10)--cycle;

\draw[very thick, color=darkgray] (-\ti,1,0) to[out=30,in=200] (-\ti,8,6);
\draw[very thick, color=darkgray] (-\ti,8,6) to[out=10,in=150] (-\ti,3,10);
\draw[very thick, color=black!50!black] (-\ti,6,0)--(-\ti,6,10);

\draw[color=gray] (-\ti,0,0)--(-\ti,\he,0)--(-\ti,\he,\wi)--(-\ti,0,\wi)--cycle;

\draw[] (+\ti-\siw,-\sih,0)--(+\ti+\le+\siw,-\sih,0)--(+\ti+\le+\siw,-\sih,\wi)--(+\ti-\siw,-\sih,\wi)--cycle;
\draw[draw=none, fill=black, opacity=0.1] (+\ti,-\sih,0)--(+\ti+\le,-\sih,0)--(+\ti+\le,-\sih,\wi)--(+\ti,-\sih,\wi)--cycle;

\draw[draw=none, fill=gray!50!white, fill opacity=0.3] (+\ti+\le*\B,-\sih,5)--(+\ti+\le*\B,0,0)--(+\ti+\le*\B,0,10)--cycle;

\draw[color=gray, opacity=0.5, densely dashed] (+\ti,-\sih,5)--(+\ti,0,0);
\draw[color=gray, opacity=0.5, densely dashed] (+\ti,-\sih,5)--(+\ti,0,10);
\draw[color=gray, opacity=0.5, densely dashed] (+\ti+\le,-\sih,5)--(+\ti+\le,0,0);
\draw[color=gray, opacity=0.5, densely dashed] (+\ti+\le,-\sih,5)--(+\ti+\le,0,10);

\draw[thick] (+\ti,-\sih,5)--(+\ti+\le,-\sih,5);

\draw[draw=none, fill=white, fill opacity=0.5] (+\ti,0,0)--(+\ti+\le,0,0)--(+\ti+\le,0,\wi)--(+\ti,0,\wi)--cycle;

\draw[draw=none, fill=darkgray!20!white,opacity=0.2]
    (\ti,1,0) to[out=30,in=200] (\ti,8,6) -- (\ti+\le*\B,8,6) to[out=200,in=30] (\ti+\le*\B,1,0) -- cycle;
\draw[draw=none, fill=darkgray!20!white,opacity=0.2]
    (\ti,8,6) to[out=10,in=150] (\ti,3,10) -- (\ti+\le*\B,3,10) to[out=150,in=10] (\ti+\le*\B,8,6) -- cycle;

\draw[very thick, color=black!50!black, opacity=0.7] (\ti,6,0)--(\ti+\le*\A,6,\wi*\A);
\draw[thick, color=black!50!black, densely dashed, opacity=0.7] (\ti+\le*\A,6,\wi*\A)--(\ti+\le*\B,6,\wi*\B);

\draw[color=gray, fill=white, fill opacity=0.7] (+\ti+\le*\B,0,0)--(+\ti+\le*\B,0,\wi)--(+\ti+\le*\B,\he,\wi)--(+\ti+\le*\B,\he,0)--cycle;

\draw[draw=none, fill=darkgray!20!white,opacity=0.2]
    (\ti+\le*\B,1,0) to[out=30,in=200] (\ti+\le*\B,8,6) -- (\ti+\le,8,6) to[out=200,in=30] (\ti+\le,1,0) -- cycle;
\draw[draw=none, fill=darkgray!20!white,opacity=0.2]
    (\ti+\le*\B,8,6) to[out=10,in=150] (\ti+\le*\B,3,10) -- (\ti+\le,3,10) to[out=150,in=10] (\ti+\le,8,6) -- cycle;
    
\draw[very thick, color=darkgray] (+\ti+\le*\B,1,0) to[out=30,in=200] (+\ti+\le*\B,8,6);
\draw[very thick, color=darkgray] (+\ti+\le*\B,8,6) to[out=10,in=150] (+\ti+\le*\B,3,10);

\draw[thick, color=black!50!black, densely dashed, opacity=0.7] (\ti+\le*\B,6,\wi*\B)--(\ti+\le*\C,6,\wi*\C);
\draw[very thick, color=black!50!black, opacity=0.7] (\ti+\le*\C,6,\wi*\C)--(\ti+\le*\D,6,\wi*\D);
\draw[thick, color=black!50!black, densely dashed, opacity=0.7] (\ti+\le*\D,6,\wi*\D)--(\ti+\le*\E,6,\wi*\E);
\draw[very thick, color=black!50!black, opacity=0.7] (\ti+\le*\E,6,\wi*\E)--(\ti+\le,6,\wi);

\draw[color=gray, densely dashed, opacity=0.5] (+\ti,0,0)--(+\ti,\he,0)--(+\ti,\he,\wi)--(+\ti,0,\wi)--cycle;
\draw[color=gray, densely dashed, opacity=0.5] (+\ti+\le,0,0)--(+\ti+\le,\he,0)--(+\ti+\le,\he,\wi)--(+\ti+\le,0,\wi)--cycle;
\draw[color=gray, densely dashed, opacity=0.5] (+\ti,0,0)--(+\ti+\le,0,0);
\draw[color=gray, densely dashed, opacity=0.5] (+\ti,\he,0)--(+\ti+\le,\he,0);
\draw[color=gray, densely dashed, opacity=0.5] (+\ti,0,\wi)--(+\ti+\le,0,\wi);
\draw[color=gray, densely dashed, opacity=0.5] (+\ti,\he,\wi)--(+\ti+\le,\he,\wi);

\node[circle, fill=black!50!black, inner sep=0pt, minimum size=2pt] at (\ti+\le*\AA,6,\wi*\AA) {};
\node[circle, fill=black!50!black, inner sep=0pt, minimum size=2pt] at (\ti+\le*\DD,6,\wi*\DD) {};

\node[circle, fill=black, inner sep=0pt, minimum size=3pt] at (-\ti,-\sih,\wi/2) {};
\node at (-\ti+0.5,-\sih,\wi/2-3) {\small $z$};

\node[circle,fill=black, inner sep=0pt, minimum size=3pt] at (\ti+\le*\B,-\sih,\wi/2) {};
\node at (\ti+\le*\B+0.5,-\sih,\wi/2-3) {\small $x$};

\node at (-\ti+0.6,3,1.5) {\color{darkgray} \small $\breve\Dd|_z$};
\node[circle, fill=black, inner sep=0pt, minimum size=3pt] at (-\ti,6,\wi*\B) {};
\node at (-\ti+0.1,5,5) {\color{black} \small $v$};

\node at (+\ti+\le*\B+0.75,3,1.5) {\color{darkgray} \small $\widetilde{f}^*\breve\Dd|_x$};
\node[circle, fill=black, inner sep=0pt, minimum size=3pt] at (\ti+\le*\B,6,\wi*\B) {};
\node at (+\ti+\le*\B+0.2,5,5) {\color{black} \small $w=S|_x$};

\node at (-\ti-0.8,5,0) {\color{black!50!black} \small $\PP(N_{\Gamma|Q}|_z)$};
\node at (-\ti-0.65,1,0) {\color{black} \small $\PP(T_Q|_z)$};

\node at (+\ti+\le+1.43,7,10) {\color{black!50!black} \small $\PP(T_{D/\Gamma}(D)|_{f^{-1}(z)})$};
\node at (+\ti+\le+1.2,1,10) {\color{black} \small $\PP(f^*T_Q|_{f^{-1}(z)})$};
\node at (+\ti+\le*\B-0.8,1,0) {\color{black} \small $\PP(f^*T_Q|_x)$};

\node at (-\ti-\siw,-\sih+1,0) {\small $Q$};
\node at (+\ti+\le+\siw+1.5,-\sih+1,0) {\small $X$};

\node at (-\ti,-\sih+1,\wi+0.5) {\small $\Gamma$};
\node at (+\ti+\le-0.1,-\sih+1,\wi+0.5) {\small $D$};
\node at (+\ti,-\sih+0.2,\wi/2-3) {\small $f^{-1}(z)$};
\end{tikzpicture}

\vspace{-0.5em}
\caption{The restriction $\widetilde{f}^*\breve\Dd|_x\subseteq\PP(f^*T_Q|_x)$ of $\widetilde{f}^*\breve\Dd\subseteq\PP(f^*T_Q)$ over $x\in D$.}
\label{fig:strict_transform}
\end{figure}

\begin{Rmk}
\label{blow-ups_of_quadric_threefold}
Let $f:X=\Bl_{\Gamma}Q\to Q$ be the blow-up of a smooth quadric threefold $Q\subseteq\PP^4$ along a smooth curve $\Gamma$ of degree $4$ and genus $0$ (No. 21 in \cite[Table2]{MM}).
Then the extremal contractions of $X$ are given by two distinct blow-ups $f_i: X\to Q$ for $i=1,\,2$.
We denote by $H_i=f_i^*\Oo_Q(1)$, and $D_i$ the exceptional divisor of $f_i: X\to Q$ for $i=1,\,2$.
Then
\[
-K_X\sim H_1+H_2=3H_1-D_1=3H_2-D_2,\ \ 
D_1\sim 2H_1-H_2,\ \ 
D_2\sim -H_1+2H_2.
\]
Let $\breve\Cc_i$ be the total dual VMRT on $\PP(T_X)$ given by Proposition~\ref{quadric} for $i=1,\,2$.
Then the sum of two effective divisors
\begin{align*}
[\breve\Cc_1]
&=2\zeta-2\Pi^*H_1+2\Pi^*D_1
=2\zeta+2\Pi^*H_1-2\Pi^*H_2,
\\
[\breve\Cc_2]
&=2\zeta-2\Pi^*H_2+2\Pi^*D_2
=2\zeta-2\Pi^*H_1+2\Pi^*H_2
\end{align*}
yields an effective divisor on $\PP(T_X)$ satisfying \eqref{condition}.
Thus by Lemma \ref{repeat}, $\zeta$ is not big on $\PP(T_X)$.
\end{Rmk}

\begin{Prop}
\label{V5}
Let $f:X=\Bl_\Gamma V_5\to V_5$ be the blow-up of the smooth del Pezzo threefold $V_5$ of degree $5$ along a smooth curve $\Gamma$ contained in an intersection of two hyperplane sections, which is not a line on $V_5$.
Then there exists an irreducible and reduced effective divisor
\[
[\breve\Cc]
\sim 3\zeta-\Pi^*H+3\Pi^*D
\]
on $\PP(T_X)$ which satisfies \eqref{condition} for $H=f^*\Oo_{V_5}(1)$ and the exceptional divisor $D$ of $f:X\to V_5$.
\end{Prop}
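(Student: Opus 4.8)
The plan is to imitate the proof of Proposition~\ref{quadric}, replacing the family of lines on the quadric $Q$ by the family of lines on $V_5$, and to realize $\breve\Cc$ as the strict transform of the total dual VMRT of lines downstairs on $V_5$. Concretely, let $\Mm\cong\PP^2$ be the family of lines on $V_5$; a general line $\ell$ satisfies $N_{\ell|V_5}\cong\Oo_{\PP^1}\oplus\Oo_{\PP^1}$ and $\Mm$ is a locally unsplit dominating family. Since exactly three lines pass through a general point of $V_5$, the VMRT $\Dd_z\subseteq\PP(\Omega_{V_5}|_z)$ is $0$-dimensional, consisting of three reduced points, and its projective dual $\breve\Dd|_z\subseteq\PP(T_{V_5}|_z)\cong\PP^2$ is a union of three lines. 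Because a general line does not meet the curve $\Gamma$, the strict transforms of general members of $\Mm$ form a locally unsplit dominating family $\Kk$ on $X$, and I would take $\breve\Cc$ to be its total dual VMRT, which coincides with the strict transform of $\breve\Dd$ over $X\setminus D\cong V_5\setminus\Gamma$.

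First I would pin down the class of $\breve\Dd$ on $\PP(T_{V_5})$. Its irreducibility and reducedness do not follow from homogeneity as in the quadric case, since $\Aut(V_5)$ is not transitive; instead I would argue that the universal family of minimal sections is irreducible — it fibers over the irreducible $\Mm\cong\PP^2$ with irreducible fibers — so that its image $\breve\Dd$ is irreducible, and generically reduced because the three lines through a general point are distinct. As the VMRT is $0$-dimensional with $\Tt_{\overline{\Uu}/\overline{\Kk}}$ locally free and $\overline{e}_{[\ell]}$ an immersion, Proposition~\ref{conic_bundle_formula} applies and gives $[\breve\Dd]\sim 3\eta-\Phi^*\overline{e}_*(c_1(\Tt_{\overline{\Uu}/\overline{\Kk}}))$ with leading coefficient $\deg(\overline{e})=3$. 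The remaining input is the Chern class computation $\overline{e}_*(c_1(\Tt_{\overline{\Uu}/\overline{\Kk}}))=H'$ on the universal family of lines, where $H'=\Oo_{V_5}(1)$, which yields $[\breve\Dd]\sim 3\eta-\Phi^*H'$. Pulling back by $\widetilde f$ and taking the strict transform through the diagram~\eqref{diagram:blow-up}, using $\alpha_*\beta^*\widetilde\eta\sim\zeta+\Pi^*D$ and $\widetilde f^*\Phi^*H'=\widetilde\Phi^*H$, gives
\[
[\breve\Cc]\sim 3\zeta-\Pi^*H+(3-m)\Pi^*D,
\]
where $m\ge 0$ is the order of vanishing of $\widetilde f^*\breve\Dd$ along $S\cong\PP_D(T_{D/\Gamma}(D))$. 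Irreducibility and reducedness of $\breve\Cc$ are inherited from $\breve\Dd$, and $\breve\Cc$ is dominated by the strict transforms of the minimal sections, so it satisfies \eqref{condition}.

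It remains to prove $m=0$, i.e.\ $S\not\subseteq\widetilde f^*\breve\Dd$, and here the hypothesis on $\Gamma$ enters. As in Proposition~\ref{quadric}, over a point $z\in\Gamma$ the image $\widetilde f(\PP(T_{D/\Gamma}(D)|_{f^{-1}(z)}))$ is the line $\PP(N_{\Gamma|V_5}|_z)\subseteq\PP(T_{V_5}|_z)$, which is the projective dual of the tangent direction $\tau_{\Gamma,z}\in\PP(\Omega_{V_5}|_z)$. This line lies in the cubic $\breve\Dd|_z$ precisely when $\tau_{\Gamma,z}\in\Dd_z$, that is, when some line of $V_5$ is tangent to $\Gamma$ at $z$. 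Writing $\Gamma\subseteq H_1\cap H_2$ for two hyperplane sections, I would argue that a line $\ell$ tangent to $\Gamma$ at $z$ must lie in $H_1$: since $T_z\ell=T_z\Gamma\subseteq T_zH_1$, the line $\ell$ is tangent to the divisor $H_1$ at $z$, forcing $(\ell.H_1)_z\ge 2$, which is impossible for $\ell\not\subseteq H_1$ as $\ell.H_1=\ell.H'=1$; likewise $\ell\subseteq H_2$, so $\ell\subseteq H_1\cap H_2$. But $H_1\cap H_2$ is a curve whose only component through a general point of $\Gamma$ is $\Gamma$ itself, and $\Gamma$ is not a line; hence no line of $V_5$ is tangent to $\Gamma$ at a general $z$, giving $\tau_{\Gamma,z}\notin\Dd_z$ and therefore $S\not\subseteq\widetilde f^*\breve\Dd$. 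This forces $m=0$ and completes the proof.

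I expect the main obstacle to be the explicit pushforward $\overline{e}_*(c_1(\Tt_{\overline{\Uu}/\overline{\Kk}}))=H'$. Unlike the quadric, where the analogous coefficient was fixed cheaply by a dimension count on the \emph{smooth} total dual VMRT, here $\breve\Dd$ is singular along the loci where the three dual lines meet, so the dimension-count shortcut is unavailable (it would only deliver an inequality), and one genuinely has to evaluate the relative tangent class on the universal family of lines over $\overline{\Kk}\cong\PP^2$; establishing the irreducibility of $\breve\Dd$ without recourse to a transitive group action is the other point requiring care.
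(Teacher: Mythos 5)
Your proposal follows the paper's own proof almost step for step: the same realization of $\breve\Cc$ as the strict transform of the total dual VMRT $\breve\Dd$ of lines on $V_5$, the same reduction (via the diagram \eqref{diagram:blow-up} and $\alpha_*\beta^*\widetilde{\eta}\sim\zeta+\Pi^*D$) to showing $S\not\subseteq\widetilde{f}^*\breve\Dd$, the same fiberwise criterion that $\PP(N_{\Gamma|V_5}|_z)$ lies in the union of three lines $\breve\Dd|_z$ exactly when some line of $V_5$ is tangent to $\Gamma$ at $z$, and essentially the same tangency argument: your intersection-multiplicity argument forcing a tangent line into $H_1\cap H_2$ is the paper's observation that any line tangent to $\Gamma$ lies in the $4$-dimensional linear subspace $L\subseteq\PP^6$ containing $\Gamma$, so that it sits inside the curve $L\cap V_5=H_1\cap H_2$; both versions then conclude via the hypothesis that $\Gamma$ is not a line.

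The one point where you diverge is the step you flag as the ``main obstacle,'' namely the class $[\breve\Dd]\sim 3\eta-\Phi^*H'$, and here your worry is misplaced in context: the paper does not carry out the pushforward computation $\overline{e}_*(c_1(\Tt_{\overline{\Uu}/\overline{\Kk}}))=H'$ at all, but simply cites \cite[Theorem~5.4]{HLS}, where this linear class is established. Your mathematical diagnosis is nevertheless accurate --- since $\breve\Dd$ is fiberwise a union of three lines it is not smooth, so the dimension count used in Proposition~\ref{quadric} would, as in Proposition~\ref{space2}, only give an inequality --- and your supplementary argument for irreducibility and reducedness of $\breve\Dd$ (fibering the family of minimal sections over $\Mm\cong\PP^2$) is sound and more explicit than the paper, which leaves that point implicit. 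With the citation substituted for the computation you left open, your proof is complete and coincides with the paper's.
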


\begin{proof}
Let $\breve\Dd$ be the total dual VMRT on $\PP(T_{V_5})$ associated to the family of lines on $V_5$.
Then, according to \cite[Theorem~5.4]{HLS},
\[
[\breve\Dd]
\sim 3\eta-\Phi^*H'.
\]
for $H'=\Oo_{V_5}(1)$ and the tautological class $\eta=\Oo_{\PP(T_{V_5})}(1)$ of $\Phi: \PP(T_{V_5})\to {V_5}$.

Let $\breve\Cc$ be the total dual VMRT on $\PP(T_X)$ associated to the family of the strict transforms of lines on $V_5$.
Then $\breve\Cc$ is the strict transform of $\breve\Dd$ from $\PP(T_{V_5})$ to $\PP(T_X)$.
By the same argument in the proof of Proposition~\ref{quadric}, the proof is completed once we show that $S\not\subseteq \widetilde{f}^*\breve\Dd$.

The VMRT of the family of lines on $V_5$ at a point $z\in V_5$ is the union of three points on $\PP(\Omega_{V_5}|_z)$ with multiplicity \cite[Lemma 2.3~(1)]{FN}.
So $\breve\Dd|_z$ is the union of three lines on $\PP(T_{V_5}|_z)$.
If there is a point $z\in \Gamma$ such that every line passing through $z$ is not tangent to $\Gamma$ at $z$, then $\PP(N_{\Gamma|V_5}|_z)\not\subseteq \breve\Dd|_z$ on $\PP(T_{V_5}|_z)$, and it implies that $S\not\subseteq \widetilde{f}^*\breve\Dd$.
Thus it suffices to show that there are only finitely many lines on $V_5$ which is tangent to $\Gamma$.

We consider $V_5$ as the subvariety of $\PP^6$ and fix a $4$-dimensional linear subspace $L$ of $\PP^6$ containing $\Gamma$.
Then $L\cap V_5$ is a union of finitely many curves on $V_5$.
If a line $l$ on $\PP^6$ is tangent to $\Gamma$, then $l\subseteq L$, and if $l$ is also contained in $V_5$, then $l\subseteq L\cap V_5$.
Therefore, we can conclude that there are only finitely many lines $l$ on $V_5$ which is tangent to $\Gamma$.
\end{proof}

\section{Total Dual VMRTs on Blow-Ups}

In this section, developing the idea of \cite[Lemma 5.3]{HLS}, we present some formulas for total dual VMRTs on $\PP(T_X)$, which are distinctly constructed from the ones in the previous section, in the case when $X$ is an imprimitive Fano threefold.

\begin{Prop}
\label{space}
Let $f:X=\Bl_\Gamma\PP^3\to\PP^3$ be the blow-up of the projective space $\PP^3$ along a smooth nondegenerate curve $\Gamma$ of degree $d$ and genus $g$.
Assume that $\Gamma$ has at most a finite number of quadrisecant lines on $\PP^3$.
Then there exists an irreducible and reduced effective divisor
{\small \[
[\breve\Cc]
\sim
\begin{cases}
\left(\frac{1}{2}(d-1)(d-2)-g\right)\zeta
+\left(d+g-1\right)\Pi^*H
-\left((d-1)-\frac{1}{2}(d-2)(d-3)+g\right)\Pi^*D
& \text{if $\Gamma$ has a trisecant}\\
\left(\frac{1}{2}(d-1)(d-2)-g\right)\zeta
+\left(d+g-1\right)\Pi^*H
-\left(d-1\right)\Pi^*D
& \text{otherwise}
\end{cases}
\]}
on $\PP(T_X)$ which satisfies \eqref{condition} for $H=f^*\Oo_{\PP^3}(1)$ and the exceptional divisor $D$ of $f:X\to \PP^3$.
\end{Prop}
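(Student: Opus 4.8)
The plan is to apply Proposition~\ref{conic_bundle_formula} to the family $\Kk$ on $X$ whose general member is the strict transform of a secant line of $\Gamma$. First I would set up the parameter space: writing $\PP^3=\PP(W)$ with $W\cong\CC^4$, the secant lines are parametrized by the smooth surface $\Sym^2\Gamma$, since over a pair $\{p,q\}$ one takes the line $\overline{pq}$. Thus the universal family of secant lines is the projectivization $\PP(\mathcal{F})$ of the rank-$2$ secant bundle $\mathcal{F}\subseteq W\otimes\Oo_{\Sym^2\Gamma}$ whose fibre at $\{p,q\}$ is the plane $\langle p,q\rangle$. As $\Gamma$ is nondegenerate its secant variety is all of $\PP^3$, so $\Kk$ is dominating; its members have $H$-degree $1$ and so cannot degenerate, so $\Kk$ is locally unsplit; and since the family has dimension $2$ the VMRT $\Cc_x$ at a general $x$ is $0$-dimensional. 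Taking $\overline{\Kk}$ to be the normalization of the image of $\Sym^2\Gamma$ in $\mathrm{Chow}(X)$ (which is again $\Sym^2\Gamma$, the secant map $\Sym^2\Gamma\dashrightarrow\mathrm{Gr}(2,4)$ being generically injective even when trisecants are present, as they form at most a curve in $\Sym^2\Gamma$), with $\overline{q}\colon\overline{\Uu}\to\overline{\Kk}$ and $\overline{e}\colon\overline{\Uu}\to X$ the universal family and evaluation, I would check the hypotheses of Proposition~\ref{conic_bundle_formula}: the no-quadrisecant assumption forces a general member to be a genuine smooth embedded $2$-secant, so $\overline{e}_{[\ell]}$ is immersed and $\Tt_{\overline{\Uu}/\overline{\Kk}}$ is locally free, whence $\Gg$ is locally free in codimension~$1$.

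The coefficient of $\zeta$ is $\deg(\overline{e})$, the number of secant lines through a general point of $X$. Projecting $\Gamma$ from a general point of $\PP^3$ to $\PP^2$ yields a plane curve of degree $d$ and geometric genus $g$ whose nodes are exactly these secants, so $\deg(\overline{e})=\tfrac12(d-1)(d-2)-g$, matching the stated $\zeta$-coefficient.

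It then remains to compute $\overline{e}_*\!\left(c_1(\Tt_{\overline{\Uu}/\overline{\Kk}})\right)=aH+bD$. Writing $\overline{e}_0=f\circ\overline{e}\colon\overline{\Uu}\to\PP^3$ and $\xi=\overline{e}_0^*\Oo_{\PP^3}(1)$, the relative Euler sequence of $\overline{\Uu}=\PP(\mathcal{F})\to\overline{\Kk}$ gives $c_1(\Tt_{\overline{\Uu}/\overline{\Kk}})=2\xi-\overline{q}^*c_1(\mathcal{F})$, so by the projection formula
\[
\overline{e}_*\!\left(c_1(\Tt_{\overline{\Uu}/\overline{\Kk}})\right)
=2\deg(\overline{e})\,H-\overline{e}_*\!\left(\overline{q}^*c_1(\mathcal{F})\right).
\]
Here $\overline{e}_*(\overline{q}^*c_1(\mathcal{F}))$ is the class of the surface swept out in $X$ by the secant lines lying over a divisor representing $c_1(\mathcal{F})$ on $\Sym^2\Gamma$. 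I would compute $c_1(\mathcal{F})$ from the standard Chern-class formulas for the secant bundle on $\Sym^2\Gamma$ in terms of $d$ and $g$, and read off the $H$-coefficient of its image surface; this produces the coefficient $d+g-1$ of $\Pi^*H$.

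The hard part is the coefficient of $\Pi^*D$, and this is exactly where the trisecant dichotomy enters. The $D$-coefficient is governed by the behaviour of the swept surface (equivalently of $\breve\Cc$) over the exceptional divisor, which I would analyse locally over a general point $p\in\Gamma$: a strict transform of a secant through $p$ meets $D$ in the fibre over $p$, but a \emph{trisecant} through $p$ meets $\Gamma$ in a further point and contributes an extra incidence with $D$. The number of trisecants through a general point of $\Gamma$ equals the number of nodes of the projection of $\Gamma$ from one of its own general points, a plane curve of degree $d-1$ and genus $g$, namely $\tfrac12(d-2)(d-3)-g$. When the trisecants dominate $\Gamma$ this count is positive and lowers the $D$-multiplicity by exactly $\tfrac12(d-2)(d-3)-g$, yielding the coefficient $(d-1)-\tfrac12(d-2)(d-3)+g$; when no trisecant passes through a general point of $\Gamma$ the clean value $d-1$ results. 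Carrying out this local multiplicity computation rigorously — identifying $c_1(\mathcal{F})$ on $\Sym^2\Gamma$ and controlling the contribution of $S\cong\PP_D(T_{D/\Gamma}(D))$ in the elementary-transformation diagram \eqref{diagram:blow-up} — is the main obstacle. Finally, irreducibility and reducedness of $\breve\Cc$ follow from the irreducibility of $\Sym^2\Gamma$, which makes the family of minimal sections $\widetilde{\ell}$ (hence $\breve\Cc$, its closure) irreducible; and condition \eqref{condition} holds automatically since $\breve\Cc$, as a total dual VMRT, is swept out by those $\widetilde{\ell}$ with $\zeta.\widetilde{\ell}=0$.
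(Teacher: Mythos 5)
Your overall strategy is the paper's: parametrize the secants by $S^2\Gamma$, apply Proposition~\ref{conic_bundle_formula}, and read off $\deg(\overline{e})=\tfrac12(d-1)(d-2)-g$ from the nodes of a general projection — all of that is correct and identical to the paper. The genuine gap sits exactly at the trisecant dichotomy, which is the substance of the statement. You take the universal family over $\overline{\Kk}\cong S^2\Gamma$ to be $\PP(\mathcal{F})$, equipped with an evaluation morphism to $X$. When $\Gamma$ has trisecants no such morphism exists: the strict-transform map $\PP(\mathcal{F})\dashrightarrow X$ is undefined along the curve of points $(\ell_{p_ip_j},p_k)$ where a trisecant meets $\Gamma$ in its third point, since there the pullback of the ideal sheaf of $\Gamma$ is supported in codimension two and hence is not invertible. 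So $\PP(\mathcal{F})$ is \emph{not} the normalized universal family of the closure of $\Kk$ in $\mathrm{Chow}(X)$; the correct family $\overline{\Uu}$ is the blow-up $\sigma:\overline{\Uu}\to\PP(\mathcal{F})$ along that curve, whose fibers over the trisecant locus are reducible conics $\widetilde{\ell}+F$ with $F$ a fiber of $D\to\Gamma$. This is precisely where the first line of the formula comes from in the paper: $K_{\overline{\Uu}/\overline{\Kk}}=\sigma^*K_{\PP(\mathcal{F})/\overline{\Kk}}+E$, and the entire trisecant correction is the pushforward $\overline{e}_*E=mD$, with $m=\tfrac12(d-2)(d-3)-g$ the number of trisecants through a general point of $\Gamma$ — the number you correctly guessed but never attached to any cycle-theoretic mechanism. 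Applied literally, your formula with $c_1(\Tt_{\PP(\mathcal{F})/\overline{\Kk}})=2\xi-\overline{q}^*c_1(\mathcal{F})$ yields the second line, coefficient $-(d-1)$, in \emph{all} cases. Your fallback — a ``local multiplicity analysis over $D$'' invoking diagram \eqref{diagram:blow-up} — is the step you yourself flag as the main obstacle and leave undone, and that diagram is in any case the mechanism for strict transforms of total dual VMRTs from $\PP(T_{\PP^3})$ to $\PP(T_X)$ (as in Propositions~\ref{quadric} and~\ref{V5}), not for computing $\overline{e}_*(K_{\overline{\Uu}/\overline{\Kk}})$ on $X$. So the decisive step of the proof is missing.

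Two secondary points. First, the $H$-coefficient $d+g-1$ is asserted to follow from ``standard Chern-class formulas for the secant bundle'' but is never derived; the paper gets it by intersection theory on $\overline{\Uu}$, writing $\overline{e}_*\overline{q}^*c_1(V)=aH+bD$ and showing $a=k+c_2(V)$, $b=-2c_2(V)/d$ with $c_2(V)=\binom{d}{2}$ counted as the secants lying in a general plane — note this same computation is what produces the untouched part $-(d-1)\Pi^*D$ of the $D$-coefficient, so it cannot be skipped. Second, your justification of local unsplitness (``$H$-degree $1$, so members cannot degenerate'') is inaccurate: members do degenerate, exactly over trisecants, into $\widetilde{\ell}+F$ with $F$ of $H$-degree $0$; what saves local unsplitness is that the trisecants sweep out only a surface, so none passes through a general point of $X$. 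Relatedly, the no-quadrisecant hypothesis is not about general members being embedded (that is automatic); it is what guarantees that the degenerate fibers of $\overline{\Uu}\to\overline{\Kk}$ have only two components, i.e.\ that $\overline{\Uu}\to\overline{\Kk}$ is a conic bundle to which Proposition~\ref{conic_bundle_formula} and the blow-up description of $\overline{\Uu}$ apply.
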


\begin{proof}
Let $\Kk$ be the family of {\color{black} unbendable} rational curves on $X$ containing the strict transforms of general secant lines of $\Gamma$ on $\PP^3$.
Note that $N_{\ell|X}\cong\Oo_{\PP^1}\oplus\Oo_{\PP^1}$ for general $[\ell]\in\Kk$.
We can observe that $\overline{\Kk}\cong S^2\Gamma$.
Indeed, $S^2\Gamma$ is smooth and there exists a finite morphism from $S^2\Gamma$ to the locus of secant lines of $\Gamma$ in the Grassmannian $\text{Gr}(2,4)$ of lines on $\PP^3$.
Let $\qq_0:\overline{\Uu_0}\to\overline{\Kk}$ be the universal family of the secant lines of $\Gamma$ on $\PP^3$ and ${\ee_0}:\overline{\Uu_0}\to\PP^3$ be its evaluation morphism.
Note that $\qq_0:\overline{\Uu_0}\to\overline{\Kk}$ is a $\PP^1$-fibration, and $\overline{\Uu_0}\cong \PP_{\overline\Kk}(V)$ for some vector bundle $V$ of rank $2$ on $\overline\Kk$, where $V$ is given by the pull-back of the universal bundle via the induced morphism $\overline{\Kk}\to \mathrm{Gr}(2,4)$.

As a general hyperplane meets a general secant line at $1$ point on $\PP^3$, we take $V={\qq_0}_*{\ee_0}^*\Oo_{\PP^3}(1)$ so that $\Oo_{\PP_{\overline\Kk}(V)}(1)={\ee_0}^*\Oo_{\PP^2}(1)$. 

We first prove the case where $\Gamma$ has a trisecant line.
Let $\qq:\overline\Uu\to \overline\Kk$ be the normalization of the universal family of curves on $X$ with the evaluation morphism $\ee: \overline\Uu\to X$.
Then $\qq:\overline\Uu\to \overline\Kk$ is a conic bundle over $\overline\Kk$ whose discriminant locus is the locus of trisecant lines of $\Gamma$ on $\PP^3$.
Moreover, $\overline\Uu$ is obtained by the blow-up $\ggg:\overline{\Uu}\to\overline{\Uu_0}$ along some curve in $\overline{\Uu_0}$ over the locus in $\overline\Kk$ of trisecant lines.
\[\xymatrix @R=2pc @C=3pc {
\overline\Kk \ar@{=}[d] & \overline\Uu \ar[r]^{\ee} \ar[l]_{\qq} \ar[d]_\ggg &
X \ar[d]^{f} \\
\overline\Kk  &
\overline{\Uu_0}\cong \PP_{\overline\Kk}(V) \ar[r]_{\ee_0} \ar[l]^{\qq_0} \ar@{-->}[ur]&
\PP^3
}
\]

Let $\xi=\ggg^*\Oo_{\PP_{\overline\Kk}(V)}(1)$ and $E$ be the exceptional divisor of $\ggg:\overline{\Uu}\to\overline{\Uu_0}$.
Then we can write
\[
K_{\overline\Uu/\overline\Kk}
=\ggg^*K_{\PP_{\overline\Kk}(V)/\overline\Kk}+E
=-2\xi+\qq^*c_1(V)+E.
\]

Let $\breve\Cc$ be the total dual VMRT on $\PP(T_X)$ associated to $\Kk$.
Then $\breve\Cc$ is irreducible by its construction, and it is reduced as a general VMRT $\Cc_x$ of $\overline\Kk$ is reduced for general $x\in X$.
Moreover, as $\Kk$ is locally unsplit, we can apply Proposition~\ref{conic_bundle_formula} to obtain the linear equivalence
\[
[\breve\Cc]
\sim k\zeta+\Pi^*\ee_*(K_{\overline\Uu/\overline\Kk})
\]
on $\PP(T_X)$ for $k=\deg (\ee)$.
From the commutativity, we have $\xi=\ggg^*{\ee_0}^*\Oo_{\PP^3}(1)=\ee^*f^*\Oo_{\PP^3}(1)=\ee^*H$, and
\[
\ee_*(K_{\overline{\Uu}/\overline{\Kk}})
=-2\ee_*\xi +\ee_*\qq^*c_1(V)+ \ee_*E
=-2kH+\ee_*\qq^*c_1(V)+ \ee_*E.
\]

First, let
\[
\ee_*\qq^*c_1(V)=aH+bD
\]
for some $a,\,b\in\ZZ$.
We know from blow-up formulas (e.\,g. see \cite[(4.3)]{MM2}) that
\[
H^3=1,\ \ 
H^2.D=0,\ \ 
H.D^2=-d.
\]
Thus
\[
aH^3
=\ee_*\qq^*c_1(V).H^2
=\qq^*c_1(V).\xi^2
=\qq^*c_1(V).(\qq^*c_1(V)\xi-\qq^*c_2(V))
={c_1(V)}^2,
\]
and ${c_1(V)}^2=kH^3+c_2(V)$ due to
\begin{align*}
kH^3
=\xi^3
=\qq^*c_1(V)\xi^2-\qq^*c_2(V)\xi
=\qq^*c_1(V)(\qq^*c_1(V)\xi-\qq^*c_2(V))-\qq^*c_2(V)\xi
={c_1(V)}^2-c_2(V).
\end{align*}
Then the number $r:=c_2(V)$ can be calculated by the number of points of the degeneracy locus of a general member of $|\Oo_{\PP_{\overline\Kk}(V)}(1)|$.
That is, $r=\#\{[\ell]\in\overline\Kk\,|\,\ell\subseteq P\}$ for general $P\in|H|$.

On the other hand,
\[
bH.D^2
=\ee_*\qq^*c_1(V).H.D
=\qq^*c_1(V).\xi.\ee^*D
=(\xi^2+\qq^*c_2(V)).\ee^*D
=kH^2.D+\qq^*c_2(V).\ee^*D
=2c_2(V)
\]
as a general secant line meets $\Gamma$ at $2$ points on $\PP^3$  so that $\ee^*D\equiv 2\xi$ in $\text{N}^1(\overline\Uu/\overline\Kk)$.

Moreover, we can deduce that
\[
\ee_*E=mD
\]
for some $m>0$ from the observation that the image $\ee(E)$ lies on the exceptional locus of $f:X\to\PP^3$, and the multiplicity $m$ is given by the number of trisecant lines of $\Gamma$ on $\PP^3$ passing through a general point of $\Gamma$.

By the projection $\pi_x$ from a general point $x$ in $\mathbb P^3$, it gives a one-to-one correspondence between the number of secant lines of $\Gamma$ through $x$ and the number of nodes of the curve $\pi_x(\Gamma)$ in $\PP^2$, which is $k$.
Also, by the projection $\pi_y$ from a general point $y\in \Gamma$, it gives a one-to-one correspondence between the number of trisecant lines of $\Gamma$ through $y$ and the number of nodes of the curve $\pi_y(\Gamma)$ in $\PP^2$, which is $m$.
Then the genus-degree formula for plane curves gives $k$ and $m$.
The number $r$ can also be computed (cf. \cite[Lemma 5.3]{HLS}).
That is,
\begin{align*}
k
&=(\text{the number of nodes in a general hyperplane projection $\Gamma\to\PP^2$})
=\frac{1}{2}(d-1)(d-2)-g,
\\
r
&=(\text{the number of choices of two points among $\Gamma\cap\PP^2$ for a general hyperplane $\PP^2\subseteq \PP^3$})
=\genfrac(){0pt}{}{d}{2},
\\
m
&=(\text{the number of nodes in the projection $\Gamma\to\PP^2$ from a general point of $\Gamma$})
=\frac{1}{2}(d-2)(d-3)-g
\end{align*}
Then we obtain the formula in the statement after plugging in the numbers to the equation
\begin{align*}
k\zeta+\Pi^*(-2kH+\ee_*\qq^*c_1(V)+ \ee_*E)
=&k\zeta-2k\Pi^*H+\left\{(r+k)\Pi^*H-\frac{2r}{d}\Pi^*D\right\}+m\Pi^*D
\\
=&k\zeta+(r-k)\Pi^*H-\left(\frac{2r}{d}-m\right)\Pi^*D.
\end{align*}

The proof for the second case is similar to the first case except $\ggg=\text{Id}_{\overline{\Uu_0}}$, and hence $m=0$.
\end{proof}

\begin{Prop}
\label{space2}
Let $f:X=\Bl_{\Gamma} \PP^3\to \PP^3$ be the blow-up of the projective space $\PP^3$ along a smooth nondegenerate curve $\Gamma$.
Then there exists an irreducible and reduced effective divisor
\[
[\breve\Cc]
\sim k\zeta-k\Pi^*H+b\Pi^*D
\ \ \text{for some $b\geq k-2$}
\]
on $\PP(T_X)$ which satisfies \eqref{condition}.
Here, $k>0$ is the degree of the dual curve of a plane curve which is the image of $\Gamma$ under the projection from a general point of $\PP^3$, $H=f^*\Oo_{\PP^3}(1)$, and $D$ is the exceptional divisor of $f: X\to \PP^3$.
\end{Prop}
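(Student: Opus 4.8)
The plan is to take for $\Kk$ the locally unsplit dominating family on $X$ whose general member is the strict transform of a line meeting $\Gamma$ in a single point, and to identify $\breve\Cc$ with the associated total dual VMRT. For general $x\in\PP^3\setminus\Gamma$ the curves of $\Kk$ through $x$ are exactly the strict transforms of the lines $\overline{x\gamma}$, $\gamma\in\Gamma$, so the VMRT $\Cc_x\subseteq\PP(\Omega_X|_x)$ is the projection $\overline\Gamma_x$ of $\Gamma$ from $x$, a plane curve of degree $d$ whose only singularities are the nodes coming from secant lines through $x$. Since $\overline\Gamma_x$ is irreducible and reduced, so is its projective dual $\breve\Cc|_x=\overline\Gamma_x^{\,*}$, and hence $\breve\Cc$ is an irreducible and reduced divisor. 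By definition $\breve\Cc|_x$ has degree equal to the class of $\overline\Gamma_x$, i.e. to the degree $k$ of its dual curve, so, as $\zeta$ restricts to the hyperplane class on each fibre $\PP(T_X|_x)$, the coefficient of $\zeta$ in $[\breve\Cc]$ is $k$. Finally $\breve\Cc$ satisfies \eqref{condition}, being dominated by the minimal sections $\widetilde\ell$ of members of $\Kk$, for which $\zeta.\widetilde\ell=0$.

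To pin down the coefficient of $H$ I would first work on $\PP(T_{\PP^3})$. Over $\PP^3\setminus\Gamma$ the divisor $\breve\Cc$ agrees with the fibrewise dual $\breve{\Dd}$ of the bundle of projected curves, and since $\Pic(\PP(T_{\PP^3}))=\ZZ\eta\oplus\ZZ\Phi^{*}H'$ one may write $[\breve{\Dd}]\sim k\eta+c\,\Phi^{*}H'$ with $H'=\Oo_{\PP^3}(1)$. I would determine $c$ with a single test curve: viewing $\PP(T_{\PP^3})$ as the flag variety of point–plane incidences, fix a general plane $\Pi_0\subseteq\PP^3$ and a general line $\ell_0\subseteq\Pi_0$, and let $B$ be the section over $\ell_0$ given by the constant plane $\Pi_0$. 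Then $\eta.B=\deg N_{\Pi_0/\PP^3}|_{\ell_0}=1$ and $\Phi^{*}H'.B=1$, while a point $(x,\Pi_0)$ can lie on $\breve{\Dd}$ only if $\Pi_0$ is tangent to the cone over $\Gamma$ with vertex $x$; for $x\in\ell_0\subseteq\Pi_0$ this forces $\Pi_0$ to contain a tangent line $T_\gamma\Gamma$ with $\gamma\in\Gamma\cap\Pi_0$, which fails for general $\Pi_0$. Hence $\breve{\Dd}.B=0$, so $k+c=0$ and $[\breve{\Dd}]\sim k(\eta-\Phi^{*}H')$, giving the coefficient $-k$ of $H=f^{*}H'$ in $[\breve\Cc]$.

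Next I would pass to $\PP(T_X)$ through the diagram \eqref{diagram:blow-up}, exactly as in the proof of Proposition~\ref{quadric}: $\breve\Cc$ is the strict transform of $\widetilde f^{*}\breve{\Dd}$ from $\PP(f^{*}T_{\PP^3})$ to $\PP(T_X)$, where $[\widetilde f^{*}\breve{\Dd}]=k\widetilde\eta-k\widetilde\Phi^{*}H$. Writing $m\ge 0$ for the order of vanishing of $\widetilde f^{*}\breve{\Dd}$ along $S\cong\PP_D(T_{D/\Gamma}(D))$, the relations $\alpha_*\beta^{*}\widetilde\eta\sim\zeta+\Pi^{*}D$ and $\alpha_*\beta^{*}\widetilde\Phi^{*}H=\Pi^{*}H$ yield
\[
[\breve\Cc]\sim\alpha_*\beta^{*}\!\left(k\widetilde\eta-k\widetilde\Phi^{*}H\right)-m\,\Pi^{*}D\sim k\zeta-k\Pi^{*}H+(k-m)\,\Pi^{*}D .
\]
Thus the proposition is equivalent to the bound $m\le 2$. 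Restricting over a general $z\in\Gamma$, where $\widetilde f$ is the projection $\PP(T_{\PP^3}|_z)\times f^{-1}(z)\to\PP(T_{\PP^3}|_z)$ and $S|_{f^{-1}(z)}$ maps onto the line $L_z:=\PP(N_{\Gamma|\PP^3}|_z)\subseteq\PP(T_{\PP^3}|_z)$, one checks that $m$ equals the multiplicity with which $L_z$ occurs as a component of the limit curve $\breve{\Dd}|_z=\lim_{x\to z}\overline\Gamma_x^{\,*}$.

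The main obstacle is therefore to show that this multiplicity is at most $2$, which I would settle by a local analysis of how the dual VMRT degenerates as the projection centre approaches $\Gamma$. With a local parametrization $\gamma(t)=z+tv+\tfrac12 t^{2}w+\cdots$ of $\Gamma$ (so $v$ spans $T_z\Gamma$) and $x=z+\varepsilon u$, the tangent plane to the cone over $\Gamma$ from $x$ along the ruling $\overline{x\gamma(t)}$ is spanned, after an elementary reduction, by $v+tw$ and $\tfrac12 t^{2}w+\varepsilon u$. As $(\varepsilon,t)\to 0$ the limiting planes are $\mathrm{span}(v,\tfrac12 w+cu)$ with $c=\lim\varepsilon/t^{2}$; these all contain $T_z\Gamma$, hence sweep out $L_z$, and the fact that the varying generator depends on $t$ only through $t^{2}$ indicates that $L_z$ is traced with multiplicity exactly $2$ at a general $z$ (the twisted cubic, where $\breve{\Dd}|_z=\overline\Gamma_z^{\,*}+2L_z$ with $\deg\overline\Gamma_z^{\,*}=2$ and $k=4$, is the model case). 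This gives $m\le 2$, hence $b=k-m\ge k-2$, and together with the irreducibility and property \eqref{condition} already established it completes the proof. The delicate point to be handled carefully is precisely the passage from this set-theoretic limit of tangent planes to the scheme-theoretic multiplicity of $L_z$ in $\breve{\Dd}|_z$, i.e. verifying that the second-order vanishing in $t$ is exactly the local order of $\widetilde f^{*}\breve{\Dd}$ along $S$.
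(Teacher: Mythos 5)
Up to its final step your proposal tracks the paper's proof closely, and those parts are correct: the family $\Kk$ of strict transforms of lines meeting $\Gamma$, the identification of the VMRT at a general point with the projection of $\Gamma$ (hence the coefficient $k$ of $\zeta$), irreducibility, and property \eqref{condition} are exactly as in the paper. Your test-curve computation of the $H$-coefficient on $\PP(T_{\PP^3})$ is a valid variant of the paper's argument (the paper instead intersects $\breve\Cc$ with the section, over a general line disjoint from $\Gamma$, determined by a general plane $P$ containing it, and shows the intersection is empty unless $P$ is tangent to $\Gamma$); both rest on the same key fact, that membership of a pair $(x,P)$ in the dual VMRT depends only on whether $P$ is tangent to $\Gamma$ and not on $x$. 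One point you should make explicit: for the special points $x\in\ell_0$ the fibre of the closure $\breve\Dd$ could a priori be larger than the fibrewise dual, but this is handled because $\breve\Dd$ is contained in the closed set $\{(x,P)\,:\,P\in\Gamma^*\}$ for the dual surface $\Gamma^*\subseteq(\PP^3)^*$. The reduction through \eqref{diagram:blow-up} to $[\breve\Cc]\sim k\zeta-k\Pi^*H+(k-m)\Pi^*D$, with $m$ the order of vanishing of $\widetilde{f}^*\breve\Dd$ along $S$, is also correct and is the same mechanism as in Proposition~\ref{quadric}.

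The genuine gap is the bound $m\le 2$, which is the entire content of the inequality $b\ge k-2$, and your argument for it is not a proof. The local parametrization of tangent planes along the rulings $\overline{x\gamma(t)}$ identifies only the set-theoretic limit (the pencil $L_z$ of planes containing $T_z\Gamma$); the remark that the moving generator ``depends on $t$ only through $t^2$'' carries no scheme-theoretic information about the order of vanishing of the divisor $\widetilde{f}^*\breve\Dd$ along the codimension-$3$ surface $S$, and your identification of $m$ with the multiplicity of $L_z$ in the fibre $\breve\Dd|_z$ is asserted rather than proved (only the inequality $m\le\operatorname{mult}_{L_z}(\breve\Dd|_z)$ is immediate, by restricting a local equation of the divisor to a fibre). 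You flag this yourself as the unresolved delicate point, so the proposal stops exactly where the proposition starts. The paper avoids multiplicities altogether: writing $[\breve\Cc]\sim k\zeta+a\Pi^*H+b\Pi^*D$, it obtains $b\ge -a-2$ from deformation theory, because the $3$-dimensional family $\widetilde{\Kk}$ of minimal sections dominates the $4$-fold $\breve\Cc$, whence
\[
3=\dim\widetilde{\Kk}\ \ge\ -K_{\breve\Cc}.\widetilde{\ell}+\dim\breve\Cc-3\ =\ \bigl(3\zeta-[\breve\Cc]\bigr).\widetilde{\ell}+1\ =\ -(a+b)+1,
\]
using $K_{\PP(T_X)}\sim-3\zeta$, $\zeta.\widetilde{\ell}=0$ and $H.\ell=D.\ell=1$; combined with $a=-k$ this gives $b\ge k-2$. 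If you want to rescue your route, the clean repair is a class count in the fibre over a general $z\in\Gamma$: $\breve\Dd|_z$ is a plane curve of degree $k=2d+2g-2$ containing, besides $\mu L_z$, the dual of the projection of $\Gamma$ from $z$, which (the projection from a general point of $\Gamma$ being birational and an immersion in characteristic $0$) is an irreducible curve of class $2(d-1)+2g-2=k-2$; hence $\mu\le 2$ and $m\le\mu\le 2$. But that still needs the classical facts just cited, whereas the paper's dimension count needs nothing beyond $\dim\widetilde{\Kk}=3$.
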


\begin{proof}
Let $\Kk$ be the family of {\color{black} unbendable} rational curves on $X$ containing the strict transforms of general lines on $\PP^3$ meeting $\Gamma$.
Note that $N_{\ell|X}\cong\Oo_{\PP^1}(1)\oplus\Oo_{\PP^1}$ for general $[\ell]\in\Kk$.
We denote by $\Cc_x$ the VMRT of $\Kk$ at $x\in X$, and $\breve\Cc$ the total dual VMRT on $\PP(T_X)$ associated to $\Kk$.

Let $x\in X\backslash D$ and $z=f(x)$.
We define $C_z\subseteq \PP(\Omega_{\PP^3}|_z)$ to be the plane curve parametrizing the tangent directions of lines on $\PP^3$ passing through $z$ and a point of $\Gamma$, and denote by $\breve C_z\subseteq\PP(T_{\PP^3}|_z)$ the projectively dual curve of $C_z$.
Notice that $C_z$ is isomorphic to the plane curve which is the image of $\Gamma$ under the projection $\PP^3\dashrightarrow\PP^2$ from $z$.
By the natural isomorphism $\PP(\Omega_X|_x)\cong\PP(\Omega_{\PP^3}|_z)$, we have the identifications
$\Cc_x\cong C_z$ and $\breve\Cc|_x\cong \breve C_z$ for general $x\in X\backslash D$.
As $\Cc_x$ is irreducible and reduced, so is $\breve\Cc|_x$ for general $x\in X$, and hence $\breve\Cc$ is irreducible and reduced. 

Let
\[
[\breve\Cc]\sim k\zeta+a\Pi^*H+b\Pi^*D
\]
for some $k>0$ and $a,\,b\in\ZZ$.
Then $k$ is equal to the degree of $\breve C_z$.

Let $[\ell]$ be a general member of $\Kk$ and $\widetilde{\ell}\subseteq\PP(T_X|_{\ell})\subseteq\PP(T_X)$ be its minimal section.
Since the members of the family $\widetilde{\Kk}$ of rational curves on $\PP(T_X)$ containing $[\widetilde{\ell}]$ dominates $\breve\Cc$, we have
\[
\dim\widetilde{\Kk}
=3
\geq -K_{\breve\Cc}.\widetilde{\ell}+\dim\breve\Cc-|\Aut(\PP^1)\,|
=(3\zeta-(k\zeta+a\Pi^*H+b\Pi^*D)).\widetilde{\ell}+1.
\]
We note that the dimension of $\Kk$ is 3 and $\widetilde\ell$ is uniquely determined by $\ell$, and hence we have $\dim\widetilde \Kk=3$.
Therefore, $b\geq -a-2$. 

Let $\elll_0$ be a general line on $\PP^3$ and $\ell_0$ be its strict transform on $X$.
Note that $N_{\ell_0|X}\cong N_{\elll_0|\PP^3} \cong \Oo_{\PP^1}(1)\oplus\Oo_{\PP^1}(1)$.
Let $\widetilde{\ell_0}\subseteq\PP(T_X|_{\ell_0})\subseteq\PP(T_X)$ be the section of $\ell_0$ corresponding to a quotient $T_X|_{\ell_0}\to N_{{\ell_0}|X}\to \Oo_{\PP^1}(1)$.
We will complete the proof by showing that
\[
\breve\Cc.\widetilde{\ell_0}
=k+(aH+bD).\ell_0
=k+a
=0,
\]
and it is sufficient to show that $\breve\Cc\cap\widetilde{\ell_0}\neq\emptyset$ if and only if $\widetilde{\ell_0}\subseteq \breve\Cc$.

Assume that $\breve\Cc\cap\widetilde{\ell_0}\neq\emptyset$.
That is, $\widetilde{\ell_0}|_x\in\breve\Cc|_x$ for some $x\in \ell_0$.
Then it is enough to show that $\widetilde{\ell_0}|_x\in\breve\Cc|_x$ for all $x\in\ell_0$.
Let $\widetilde{\elll_0}\subseteq\PP(T_{\PP^3}|_{\elll_0})\subseteq\PP(T_{\PP^3})$ be the image of $\widetilde{\ell_0}$ under the isomorphism $\PP(T_X|_{\ell_0})\xrightarrow{\simeq}\PP(T_{\PP^3}|_{\elll_0})$.
So it is reduced to prove the following claim.

\smallskip
\noindent \underline{Claim.}
$\widetilde{\elll_0}|_z\in\breve C_z$ for some $z\in \elll_0$ implies that $\widetilde{\elll_0}|_z\in\breve C_z$ for all $z\in \elll_0$.
\smallskip

Notice that $\widetilde{\elll_0}$ is a section of $\elll_0$ corresponding to a quotient $T_{\PP^3}|_{\elll_0}\to N_{{\elll_0}|{\PP^3}}\to \Oo_{\PP^1}(1)$.
Let $P$ be a plane containing $\elll_0$ on $\PP^3$.
Then it determines a quotient
\begin{equation}
\label{the_quotient}
T_{\PP^3}|_{\elll_0}\to N_{\elll_0|\PP^3}\to N_{P|\PP^3}|_{\elll_0}\cong \Oo_{\PP^1}(1),
\end{equation}
and vice versa.
That is, given $\elll_0$ on $\PP^3$, there is a $1$-to-$1$ correspondence between a plane $P$ containing $\elll_0$ and a section $\widetilde{\elll_0}$ of $\elll_0$ corresponding to a quotient $T_{\PP^3}|_{\elll_0}\to N_{\elll_0|\PP^3}\to\Oo_{\PP^1}(1)$, both are parametrized by $\mathbb P^2$. 

Let $z\in l_0$. As the kernel of the quotient \eqref{the_quotient} is $T_P|_{\elll_0}$, the line $L_z:=\PP(\Omega_P|_z) \subseteq \PP(\Omega_{\PP^3}|_z)$ parametrizes the tangent directions of $P$ at $z$.
Thus the line $L_z$ is tangent to $C_z\subseteq \PP(\Omega_{\PP^3}|_z)$ if and only if the point $\PP(N_{P|\PP^3}|_z)$ is contained in $\breve C_z\subseteq \PP(T_{\PP^3}|_z)$ as $L_z=\PP(\Omega_P|_z)$ and $\PP(N_{P|\PP^3}|_z)$ are projectively dual.
This is equivalent to say that $P$ is tangent to $\Gamma$ on $\PP^3$ because $C_z$ and $L_z$ are linear projections of $\Gamma$ and $P$ respectively after identifying $\PP^3\dashrightarrow\PP(\Omega_{\PP^3}|_z)$ to the projection from $z$.
If we choose a plane $P$ that contains $l_0$ and tangents to $\Gamma$, then any point $z\in l_0$ satisfies the above.
Therefore, we have the claim.
\end{proof}

\begin{Theorem}
\label{space-cor}
Let $f:X=\Bl_\Gamma\PP^3\to\PP^3$ be the blow-up of the projective space $\PP^3$ along a smooth nondegenerate curve $\Gamma$.
Assume that $\Gamma$ has at most a finite number of quadrisecant lines on $\PP^3$.
Then $T_X$ is big if and only if $\Gamma$ is a twisted cubic curve.
\end{Theorem}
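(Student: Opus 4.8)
The plan is to prove the two directions separately, using the explicit total dual VMRTs produced in Proposition~\ref{space} and Proposition~\ref{space2} together with the obstruction criterion of Lemma~\ref{repeat}. First I would record the elementary reduction: since $\Gamma$ is smooth and nondegenerate it spans $\PP^3$, so $d=\deg\Gamma\geq 3$, and a smooth nondegenerate curve of degree $3$ is necessarily a rational normal cubic. Hence $\Gamma$ is a twisted cubic if and only if $d=3$, and the assertion becomes: $T_X$ is big if and only if $d=3$. The ``if'' direction is already done, since for a twisted cubic we are in case No.~27 of \cite[Table~2]{MM}, where the bigness of $T_X$ was established in Remark~\ref{divisor_from_projective_space}.

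For the ``only if'' direction I assume $d\geq 4$ and combine the total dual VMRT $\breve\Cc_1$ associated to the secant lines (Proposition~\ref{space}) with the total dual VMRT $\breve\Cc_2$ associated to the lines through a point of $\Gamma$ (Proposition~\ref{space2}). Writing $k_1=\tfrac12(d-1)(d-2)-g$, the class of $\breve\Cc_1$ carries $\Pi^*H$-coefficient $d+g-1$ and $\Pi^*D$-coefficient $-c_1$ with $c_1=(d-1)-m$, where $m$ is the number of trisecants of $\Gamma$ through a general point; meanwhile $[\breve\Cc_2]\sim k_2\zeta-k_2\Pi^*H+b\Pi^*D$ with $b\geq k_2-2$. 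The decisive input is the Plücker computation $k_2=2(d+g-1)$: the plane projection of $\Gamma$ from a general point of $\PP^3$ has degree $d$, geometric genus $g$, hence $\tfrac12(d-1)(d-2)-g$ nodes, so its dual curve has degree $d(d-1)-2\big(\tfrac12(d-1)(d-2)-g\big)=2(d+g-1)$. Consequently $2\cdot(d+g-1)=k_2$, so in the combination $2[\breve\Cc_1]+[\breve\Cc_2]$ the $\Pi^*H$-terms cancel and we are left with a class of the form $(2k_1+k_2)\zeta+(b-2c_1)\Pi^*D$.

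It then remains to check that the residual $\Pi^*D$-coefficient is nonnegative, for then $2[\breve\Cc_1]+[\breve\Cc_2]\sim(2k_1+k_2)\zeta+\Pi^*\big((b-2c_1)D\big)$ with $(b-2c_1)D$ effective and $2k_1+k_2>0$, and Lemma~\ref{repeat} (applied with $\Dd=2\breve\Cc_1+\breve\Cc_2$, each summand satisfying \eqref{condition} by Propositions~\ref{space} and~\ref{space2}) forces $\zeta$ to be non-big. Here I would use the identity $g+m=\tfrac12(d-2)(d-3)$, which holds in both cases of Proposition~\ref{space}: in the trisecant case this is the definition of $m$, while in the trisecant-free case $m=0$ and the projection of $\Gamma$ from a general point of $\Gamma$ is an immersed plane curve of degree $d-1$ and geometric genus $g=\tfrac12(d-2)(d-3)$. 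Combining with $b\geq k_2-2=2(d+g-1)-2$ and $c_1=(d-1)-m$ gives
\[
b-2c_1\ \geq\ \big(2(d+g-1)-2\big)-2\big((d-1)-m\big)=2(g+m)-2=(d-2)(d-3)-2,
\]
which is nonnegative precisely when $d\geq 4$. This completes the ``only if'' direction and hence the theorem.

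The main obstacle is exactly the nonnegativity of $b-2c_1$, which depends on three separate ingredients aligning: the exact value $k_2=2(d+g-1)$ from the Plücker formula (making the $\Pi^*H$-terms cancel), the lower bound $b\geq k_2-2$ from the dimension count in Proposition~\ref{space2}, and the identity $g+m=\tfrac12(d-2)(d-3)$ relating the genus to the trisecant count. It is the threshold $(d-2)(d-3)-2\geq 0$, i.e. $d\geq 4$, that cleanly separates the non-big range from the twisted cubic $d=3$ (where the same combination yields a negative $\Pi^*D$-coefficient, so Lemma~\ref{repeat} does not apply), matching the positive result recalled in Remark~\ref{divisor_from_projective_space}.
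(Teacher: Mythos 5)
Your proof is correct and essentially identical to the paper's: both combine the secant-line total dual VMRT (Proposition~\ref{space}) and the lines-through-a-point total dual VMRT (Proposition~\ref{space2}) in the ratio $2{:}1$, use the Pl\"ucker degree $2(d+g-1)$ of the dual of the general plane projection to cancel the $\Pi^*H$-terms, and check that the residual $\Pi^*D$-coefficient is at least $(d-2)(d-3)-2=(d-1)(d-4)\geq 0$ for $d\geq 4$ before applying Lemma~\ref{repeat}, with the case $d=3$ handled by Remark~\ref{divisor_from_projective_space}. The only differences are cosmetic: your labels $\breve\Cc_1,\breve\Cc_2$ are swapped relative to the paper's, and you package the final bound through the identity $g+m=\tfrac12(d-2)(d-3)$ instead of factoring the coefficient directly.
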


\begin{proof}
Let $d$ and $g$ be the degree and genus of $\Gamma$.
If $d=3$, then $T_X$ is big by Remark~\ref{divisor_from_projective_space}.
Otherwise, if $d\geq 4$, then let $\breve\Cc_1$ and $\breve\Cc_2$ be the total dual VMRTs on $\PP(T_X)$ respectively given by Proposition~\ref{space2} and Proposition~\ref{space}.
Then, for general $x\in X$, $\breve\Cc_1|_x$ is projectively dual to a plane curve of degree $d$ with $\delta=\frac{1}{2}(d-1)(d-2)-g$ nodes, hence the degree of $\breve\Cc_1|_x$ is given by
\[
d(d-1)-2\delta
=d(d-1)-2\cdot\left(\frac{1}{2}(d-1)(d-2)-g\right)
=2d+2g-2.
\vspace{-0.5em} 
\]
So $\breve\Cc_1$ and $\breve\Cc_2$ satisfy
\begin{align*}
[\breve\Cc_1]
&\sim (2d+2g-2)\zeta-(2d+2g-2)\Pi^*H+\left((2d+2g-4)+c\right)\Pi^*D\\
[\breve\Cc_2]
&\sim \left(\frac{1}{2}(d-1)(d-2)-g\right)\zeta+(d+g-1)\Pi^*H-\left((d-1)-\frac{1}{2}(d-2)(d-3)+g\right)\Pi^*D
\end{align*}
for some $c\geq 0$ where $H=f^*\Oo_{\PP^3}(1)$ and $D$ is the exceptional divisor of $f: X\to\PP^3$ (the second formula is valid even in the case $d=4$ and $g=1$ where $\Gamma$ has no trisecant line on $\PP^3$).
Then they yield an effective divisor
\vspace{-0.3em} 
\[
[\breve\Cc_1]+2[\breve\Cc_2]
=k\zeta+\left(
(d-1)(d-4)+c
\right)\Pi^*D
\]
on $\PP(T_X)$ satisfying \eqref{condition} for some $k>0$ and $c\geq 0$.
Thus by Lemma \ref{repeat}, $\zeta$ is not big on $\PP(T_X)$.
\end{proof}

\begin{Rmk}
\label{blow-ups_of_projective_space}
Let $X$ be a Fano threefold given by the blow-up $f:X=\Bl_{\Gamma}\PP^3\to \PP^3$ of the projective space $\PP^3$ along a smooth nondegenerate curve $\Gamma$ of degree $d$ and genus $g$.
Then $\Gamma$ has no quadrisecant line on $\PP^3$.
Indeed, if $\Gamma$  has a quadrisecant line, then $X$ is not Fano because the strict transform $\ell$ of a quadrisecant line satisfies $-K_X\cdot\ell=0$. 
\begin{center}
\vspace{0.7em}
\begin{tabular}{c|c|c|c|c|c|c|c|c}
\hline
No. in \cite[Table~2]{MM} &
4 & 9 & 12 & 15 & 17 & 19 & 22 & 25 \\
\hline
$(d,g)$ &
$(9,10)$ & $(7,5)$ & $(6,3)$ & $(6,4)$ & $(5,1)$ & $(5,2)$ & $(4,0)$ & $(4,1)$ \\
\hline
\end{tabular}
\vspace{0.7em}
\end{center} 
By Theorem~\ref{space-cor}, $T_X$ is not big in the above cases.
\end{Rmk}

{
\begin{Prop}
\label{quadric2}
Let $f:X=\Bl_\Gamma Q\to Q$ be the blow-up of a smooth quadric threefold $Q\subseteq\PP^4$ along a smooth curve $\Gamma$ of degree $d$.
Assume that $\Gamma$ has at most a finite number of trisecant lines on $Q$.
Then there exists an irreducible and reduced effective divisor
\[
[\breve\Cc]
\sim
d\zeta
+\left(m-2\right)\Pi^*D
\]
on $\PP(T_X)$ which satisfies \eqref{condition}.
Here, $m\geq 0$ is the number of secant lines of $\Gamma$ on $Q$ passing through a general point of $\Gamma$, and $D$ is the exceptional divisor of $f:X\to Q$.
\end{Prop}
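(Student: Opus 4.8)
The plan is to follow the construction of Proposition~\ref{space}, replacing secant lines by the family $\Kk$ whose general member is the strict transform of a general line on $Q$ meeting $\Gamma$. First I would check that $\Kk$ is a locally unsplit dominating family with $0$-dimensional VMRT, so that Proposition~\ref{conic_bundle_formula} applies. Since the lines on $Q$ form the threefold $\Mm\cong\PP^3$ and meeting the curve $\Gamma$ is one condition, the lines meeting $\Gamma$ form an irreducible surface; for a general point $x\in Q$ the lines on $Q$ through $x$ sweep out the quadric cone $Q\cap T_xQ$, which meets $\Gamma$ in $\Gamma\cap T_xQ$, i.e.\ in $d$ points, so there are exactly $d$ members through $x$ and $\deg(\ee)=d$. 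A general member meets $\Gamma$ transversally in one point, whence $-K_X\cdot\ell=(3H-D)\cdot\ell=2$ and $N_{\ell|X}\cong\Oo_{\PP^1}\oplus\Oo_{\PP^1}$. Irreducibility and reducedness of the resulting effective divisor $\breve\Cc$, and property~\eqref{condition}, follow from the construction as in Proposition~\ref{space}, and Proposition~\ref{conic_bundle_formula} gives
\[
[\breve\Cc]\sim d\,\zeta+\Pi^*\ee_*\!\bigl(K_{\overline{\Uu}/\overline{\Kk}}\bigr).
\]

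The task then reduces to computing $\ee_*(K_{\overline{\Uu}/\overline{\Kk}})=c_H\,H+c_D\,D$. Let $\overline\Kk$ be the normalization of the surface of lines meeting $\Gamma$ and $\overline{\Uu_0}\cong\PP_{\overline\Kk}(V)$ the universal $\PP^1$-bundle, with $V=\qq_*\ee^*H$ of rank $2$ and $\xi:=\ee^*H=\Oo_{\PP(V)}(1)$. Because a general member meets $\Gamma$ once while a secant meets it twice, the universal family $\overline\Uu$ of strict transforms on $X$ is obtained from $\overline{\Uu_0}$ by a blow-up $\ggg:\overline\Uu\to\overline{\Uu_0}$ along the locus of second intersection points lying over the secant locus, in exact analogy with the trisecant blow-up of Proposition~\ref{space}; here the hypothesis that $\Gamma$ has no trisecant line on $Q$ ensures this is the only degeneration, so that the center of $\ggg$ is a smooth curve. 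Writing $E$ for its exceptional divisor,
\[
K_{\overline{\Uu}/\overline{\Kk}}=-2\xi+\qq^*c_1(V)+E.
\]

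For the $H$-component I would repeat the Chern-class bookkeeping of Proposition~\ref{space}, now on the quadric blow-up where $H^3=2$, $H^2\!\cdot\!D=0$, and $H\!\cdot\!D^2=-d$. One has $\ee_*\xi=dH$, and intersecting $\ee_*\qq^*c_1(V)$ with $H^2$ produces $c_1(V)^2$. From $\xi^3=(\ee^*H)^3=d\cdot H^3=2d$ and the identity $\xi^3=c_1(V)^2-c_2(V)$, together with $c_2(V)=2d$ — the number of lines of $\Kk$ contained in a general $P\in|H|$, namely the two rulings through the $d$ points of $\Gamma$ in the smooth quadric surface $Q\cap H_0$ — one gets $c_1(V)^2=4d$, so the $H$-part of $\ee_*\qq^*c_1(V)$ equals $2dH$ and cancels the $-2dH$ coming from $-2\xi$; hence $c_H=0$, as the statement requires. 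The $D$-component splits as the $D$-part of $\ee_*\qq^*c_1(V)$, computed by intersecting with $H\!\cdot\!D$ and using that $\ee^*D$ has fibre-degree $1$, together with $\ee_*E=mD$, where $m$ is the number of secant lines of $\Gamma$ on $Q$ through a general point of $\Gamma$; summing yields $c_D=m-2$ and the asserted class.

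The main obstacle is the exact evaluation of $c_D$. Proving that $\ee_*(-2\xi+\qq^*c_1(V))$ has $D$-coefficient precisely $-2$ requires a careful local description of $\ee$ along $D$, i.e.\ of $\ee^*D$ as $\xi$ plus a purely vertical correction; and proving $\ee_*E=mD$ requires identifying the second-intersection locus blown up by $\ggg$ with the configuration of secants through points of $\Gamma$ and checking that $\ggg$ maps $E$ onto $D$ with the multiplicity counted by $m$. Both steps hinge on the no-trisecant hypothesis, which guarantees that the family acquires only a simple node over each secant, so that—as in Proposition~\ref{space}—the excess contribution along $D$ is governed solely by $m$; confirming this, and that $\Gg$ is locally free in codimension one so that Proposition~\ref{conic_bundle_formula} is applicable, is the delicate heart of the proof.
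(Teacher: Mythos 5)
Your proposal is correct and follows essentially the same route as the paper's own proof: the same family of strict transforms of lines on $Q$ meeting $\Gamma$, the same application of Proposition~\ref{conic_bundle_formula} through the blow-up $\ggg\colon\overline{\Uu}\to\overline{\Uu_0}$ over the secant locus, and the same Chern-class bookkeeping with $k=\deg(\ee)=d$, $r=c_2(V)=2d$, $\ee^*D\equiv\xi$ in $\mathrm{N}^1(\overline{\Uu}/\overline{\Kk})$, and $\ee_*E=mD$, which gives the class $d\zeta+(m-2)\Pi^*D$. The points you flag as the ``delicate heart'' are precisely the ones the paper disposes of by the same brief observations (and your justifications of $k=d$ via the cone $Q\cap T_xQ$ and of $r=2d$ via the two rulings of $Q\cap P$ are in fact more detailed than the paper's), so there is no gap.
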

\begin{proof}
We continue to use the notation in the proof of Proposition~\ref{space}.
The proof is similar to the proposition.
Let $\Kk$ be the family of {\color{black} unbendable} rational curves on $X$ containing the strict transforms of lines meeting $\Gamma$ on $Q$, and $\breve\Cc$ be the total dual VMRT on $\PP(T_X)$ associated to $\Kk$.
From the diagram
\vspace{-0.3em} 
\[\xymatrix @R=2pc @C=3pc {
\overline\Kk \ar@{=}[d] & \overline\Uu \ar[r]^{\ee} \ar[l]_{\qq} \ar[d]_\ggg &
X \ar[d]^{f} \\
\overline\Kk  &
\overline{\Uu_0}\cong \PP_{\overline\Kk}(V) \ar[r]_{\ee_0} \ar[l]^{\qq_0} \ar@{-->}[ur]&
Q,
}
\vspace{-0.7em} 
\]
we have the formula
\[
[\breve\Cc]
\sim k\zeta+\Pi^*\ee_*(K_{\overline\Uu/\overline\Kk})
=k\zeta
-2\Pi^*\ee_*\xi
+\Pi^*\ee_*\qq^*c_1(V)
+\Pi^*\ee_*E
\]
for $k=\deg(\ee)$.

As a general hyperplane section meets a general line at $1$ point on $Q$, we take $V={\qq}_*{\ee}^*H$ so that $\xi={\ee}^*H$ for the tautological class $\xi=\Oo_{\PP_{\overline{\Kk}}}(1)$.
Then $\ee_*\xi=kH$.
Let
\[
\ee_*\qq^*c_1(V)=aH+bD\ \ 
\text{and}\ \ 
\ee_*E=mD
\]
for some $a,\,b\in\ZZ$ and $m\geq 0$.
In this case, $H^3=2$, $H^2.D=0$, and $H.D^2=-d$.
Thus from the same calculation in the proof of Proposition~\ref{space},
\[
a
=\frac{1}{2}{c_1(V)}^2
=k+\frac{1}{2}r,\ \ 
b
=-\frac{1}{2}\qq^*c_2(V).\ee^*D
=-\frac{1}{d}r
\]
for $r:=c_2(V)$ as a general line is chosen to meet $\Gamma$ at 1 point on $Q$ so that $\ee^*D\equiv \xi$ in $\text{N}^1(\overline{\Uu}/\overline{\Kk})$.

Thus we obtain the formula in the statement using the following correspondences.
\begin{align*}
k
&=(\text{the number of lines on $Q$ joining $\Gamma$ and a general point of $Q$})
=d
\\[0.1\baselineskip]
r
&=(\text{the number of lines on $Q$ meeting $\Gamma$ and contained in a general hyperplane section of $Q$})
=2d
\\[0.1\baselineskip]
m
&=(\text{the number of secant lines of $\Gamma$ on $Q$ passing through a general point of $\Gamma$})
\qedhere
\end{align*}
\end{proof}
}

{\begin{Rmk}
\label{blow-ups_of_quadric_threefold2}
Let $f:X=\Bl_{\Gamma}Q\to Q$ be the blow-up of a smooth quadric threefold $Q\subseteq\PP^4$ along a smooth curve $\Gamma$.
We denote by $H=f^*\Oo_Q(1)$, and $D$ the exceptional divisor of $f: X\to Q$.

If $\Gamma$ is a quartic elliptic curve (No.~23 in \cite[Table~2]{MM}) given by the intersection of $Q$ with $A\in|\Oo_{\PP^4}(1)|$ and $B\in|\Oo_{\PP^4}(2)|$ on $\PP^4$, then the number of secant lines of $\Gamma$ on $Q$ passing through a general point of $\Gamma$ is $m=2$.

Indeed, in No.~23.a, a secant line of $\Gamma$ on $Q$  corresponds to a rule in the quadric surface $Q\cap A\subseteq A\cong \PP^3$ and $\Gamma=Q\cap A\cap B$ corresponds to a $(2,2)$-divisor on the quadric surface $Q\cap A$.
Then the number of secant lines of $\Gamma$ on $Q$ passing through a general point of $\Gamma$ is $m=2$.
In No.~23.a, the locus of secant lines of $\Gamma$ is two disjoint lines which is parametrized by two rulings in $Q\cap A$.
In No.~23.b, the locus of secant lines of $\Gamma$ is a double line which is also parametrized by the ruling in a quadric cone surface $Q\cap A$.
So we have the same $m=2$.

Let $\breve\Cc$ be the total dual VMRT on $\PP(T_X)$ given by Proposition~\ref{quadric2}.
Then it yields an effective divisor
\[
[\breve\Cc]
\sim 4\zeta
\]
on $\PP(T_X)$ satisfying \eqref{condition}.
Thus by Lemma \ref{repeat}, $\zeta$ is not big on $\PP(T_X)$.

If $\Gamma$ is a smooth conic (No.~29 in \cite[Table~2]{MM}), then there is no secant line of $\Gamma$ on $Q$ because the plane spanned by $\Gamma$ and $Q$ have $\Gamma$ as their intersection.

Let $\breve\Cc_1$ be the total dual VMRT on $\PP(T_X)$ given by Proposition~\ref{quadric2}.
Then it yields an effective divisor
\[
[\breve\Cc_1]
\sim 2\zeta-2\Pi^*D
\]
on $\PP(T_X)$.
Together with the total dual VMRT $\breve\Cc_2$ on $\PP(T_X)$ given by Proposition~\ref{quadric},
we can write a multiple of $\zeta$ by a positive linear combination of effective divisors on $\PP(T_X)$ as
\[
4\zeta=[\breve\Cc_1]+[\breve\Cc_2]+2\Pi^*H.
\]
Thus $\zeta$ is big on $\PP(T_X)$.
That is, $T_X$ is big.
\end{Rmk}}

\section{Conic Bundles with Non-empty Discriminant}

In this section, we disprove the bigness of $T_X$ in the case when a Fano threefold $X$ admits a standard conic bundle structure $\pi: X\to \PP^2$ with non-empty discriminant $\Delta\subseteq\PP^2$ of degree $d=\deg\Delta$.
When the discriminant is empty, i.\,e., $X$ admits a $\PP^1$-fibration, we have proved the bigness of $T_X$ from a number of remarks in previous sections except No. 24 in \cite[Table~2]{MM}, which admits another conic bundle structure with non-empty discriminant (see \ref{MM:2-24}).

\subsection{Degree 3}
There are two deformation types of standard conic bundle $\pi:X\to\PP^2$ with $d=3$:\linebreak
No. 20 and 24 in \cite[Table~2]{MM}.

\subsubsection{No. 20}\label{MM:2-20}
Let $f:X\to V_5$ be the blow-up of the smooth del Pezzo threefold $V_5$ of degree $5$ along a smooth curve $\Gamma$ of degree $3$ and genus $0$.
\[\xymatrix @R=2pc @C=2pc {
X=\Bl_\Gamma V_5 \ar[d]_{\pi=\varphi_{|H-D|}} \ar[r]^{f} & V_5 \\ \PP^2
}
\]
We denote by $H=f^*\Oo_{V_5}(1)$, and $D$ the exceptional divisor of $f: X\to V_5$.
Then
\[
-K_X\sim 2H-D= H+h,\ \ 
h\sim H-D,\ \ 
D\sim H-h.
\]

Let $\breve\Cc_1$ be the total dual VMRT on $\PP(T_X)$ associated to the family of fibers of $\pi: X\to\PP^2$.
Then we know from Proposition~\ref{conic_bundle_formula} that
\[
[\breve\Cc_1]
\sim \zeta+\Pi^*(K_X-\pi^*K_{\PP^2})
=\zeta+\Pi^*((-H-h)-(-3h))
=\zeta-\Pi^*H+2\Pi^*h.
\]

On the other hand, let $\breve\Cc_2$ be the total dual VMRT on $\PP(T_X)$ associated to the family of the strict transforms of lines on $V_5$.
Then, by Proposition~\ref{V5},
\[
[\breve\Cc_2]
\sim 3\zeta-\Pi^*H+3\Pi^*D
=3\zeta-\Pi^*H+3\Pi^*(H-h)
=3\zeta+2\Pi^*H-3\Pi^*h.
\]

So $5\zeta$ can be expressed as
\[
2[\breve\Cc_1]+[\breve\Cc_2]=5\zeta+\Pi^*h
\]
on $\PP(T_X)$ satisfying \eqref{condition}.
Thus by Lemma \ref{repeat}, $\zeta$ is not big.

\subsubsection{No. 24}\label{MM:2-24}
Let $X$ be a $(1,2)$-divisor on $\PP^2\times\PP^2$.
Then $X$ is not only a conic bundle $\pi_1:X\to\PP^2$ but also a ruled variety $\pi_2: X=\PP(E)\to \PP^2$ for some vector bundle $E$ of rank $2$ on $\PP^2$.
\[\xymatrix @R=2pc @C=2pc {
X \ar[d]_{\pi_1} \ar[r]^{\pi_2} & \PP^2 \\ \PP^2
}
\]
We denote by $h_i=\pi_i^*\Oo_{\PP^2}(1)$ for $i=1,\,2$.
Then $-K_X\sim 2h_1+h_2$.

Let $\breve\Cc_i$ be the total dual VMRT on $\PP(T_X)$ associated to the family of fibers of $\pi_i: X\to\PP^2$ for $i=1,\,2$.
Then we know from Proposition~\ref{conic_bundle_formula} that
\begin{align*}
[\breve\Cc_1]
&\sim\zeta+\Pi^*(K_X-\pi_1^* K_{\PP^2})
=\zeta+\Pi^*((-2h_1-h_2)-(-3h_1))
=\zeta+\Pi^*h_1-\Pi^*h_2,\\
[\breve\Cc_2]
&\sim\zeta+\Pi^*(K_X-\pi_2^* K_{\PP^2})
=\zeta+\Pi^*((-2h_1-h_2)-(-3h_2))
=\zeta-2\Pi^*h_1+2\Pi^*h_2.
\end{align*}

So $3\zeta$ can be expressed as
\[
2[\breve\Cc_1]+[\breve\Cc_2]
=3\zeta
\]
on $\PP(T_X)$ satisfying \eqref{condition}.
Thus by Lemma \ref{repeat}, $\zeta$ is not big.

\subsection{Degree 4}
There are two deformation types of standard conic bundles $\pi:X\to\PP^2$ with $d=4$:\linebreak
No. 13 and 18 in \cite[Table~2]{MM}.

\subsubsection{No. 13}\label{MM:2-13}
Let $f: X=\Bl_\Gamma Q\to Q$ be the blow-up of a smooth quadric threefold $Q\subseteq\PP^4$ along a smooth curve $\Gamma$ of degree $6$ and genus $2$ \cite[Section 5.2]{BB}.
\[\xymatrix @R=2pc{
X=\Bl_\Gamma Q \ar[d]_{\pi=\varphi_{|2H-D|}} \ar[r]^{\ \ \ \ \ f}
& Q\\
\PP^2 & &
}
\]
We denote by $h=\pi^*\Oo_{\PP^2}(1)$, $H=f^*\Oo_Q(1)$, and $D$ the exceptional divisor of $f: X\to Q$.
Then
\[
-K_X\sim 3H-D= H+h,\ \ 
h\sim 2H-D,\ \ 
D\sim 2H-h.
\]

Let $\breve\Cc_1$ be the total dual VMRT on $\PP(T_X)$ associated to the family of fibers of $\pi: X\to\PP^2$.
Then we know from Proposition~\ref{conic_bundle_formula} that
\[
[\breve\Cc_1]
\sim \zeta+\Pi^*(K_X-\pi^*K_{\PP^2})
=\zeta+\Pi^*((-H-h)-(-3h))
=\zeta-\Pi^*H+2\Pi^*h.
\]

On the other hand, let $\breve\Cc_2$ be the total dual VMRT on $\PP(T_X)$ associated to the family of the strict transforms of lines on $Q$.
Then, by Proposition~\ref{quadric},
\[
[\breve\Cc_2]
\sim 2\zeta-2\Pi^*H+2\Pi^*D
=2\zeta-2\Pi^*H+2\Pi^*(2H-h)
=2\zeta+2\Pi^*H-2\Pi^*h.
\]

So $3\zeta$ can be expressed as
\[
[\breve\Cc_1]+[\breve\Cc_2]=3\zeta+\Pi^*H
\]
on $\PP(T_X)$ satisfying \eqref{condition}.
Thus by Lemma \ref{repeat}, $\zeta$ is not big.

\subsubsection{No. 18}\label{MM:2-18}
Let $f: X\to \PP^2\times \PP^1$ be the double cover branched over a $(2,2)$-divisor $B$ on $\PP^2\times\PP^1$.
\[\xymatrix @R=2pc @C=3.5pc {
& X \ar[dl]_{\pi} \ar[dr]^{p} \ar[d]^f & \\
\PP^2 & \PP^2\times \PP^1 \ar[l]
\ar[r]
& \PP^1
}\]
We denote by $h=\pi^*\Oo_{\PP^2}(1)$ and $H=p^*\Oo_{\PP^1}(1)$.
Then $-K_X\sim f^*(-K_{\PP^2\times\PP^1}-\frac{1}{2}B)=H+2h$.

Let $\breve\Cc_1$ be the total dual VMRT on $\PP(T_X)$ associated to the family of fibers of the conic bundle $\pi: X\to\PP^2$.
Then we know from Proposition~\ref{conic_bundle_formula} that
\[
[\breve\Cc_1]
\sim \zeta+\Pi^*(K_X-\pi^*K_{\PP^2})
=\zeta+\Pi^*((-H-2h)-(-3h))
=\zeta-\Pi^*H+\Pi^*h.
\]

Note that $p:X\to \PP^1$ is a del Pezzo fibration whose general fiber $X_t$ is isomorphic to a smooth quadric surface whereas a singular fiber is isomorphic to a quadric cone (cf. \cite[p.\,109]{MM2}).
Let
\begin{itemize}
\item $\Kk$ be the family of rational curves on $X$ containing a rule $\ell$ of $X_t\cong\PP^1\times\PP^1$,
\end{itemize}
and $\breve\Cc_2$ be the total dual VMRT on $\PP(T_X)$ associated to $\Kk$, whose linear class is given by
\[
[\breve\Cc_2] \sim k\zeta + c\Pi^*H + d\Pi^*h
\]
for some $k>0$ and $c,\,d\in\ZZ$.
Because there are two families of rules in each smooth fiber $X_t$, $\Kk$ may parametrize both of the families in $X_t$, or parametrize only one of the families in $X_t$.

The second case does exist: if $\Kk$ parametrizes only the lines of one ruling, then we can choose a section $s$ of the fibration.
The curves in $\Kk$ meeting the section $s$ form a divisor $D \subset X$ such that the restriction to the general fiber is $\Oo_{\PP^1\times\PP^1}(1,0)$.
This contradicts to $\rho(X/\PP^1)=1$.

When $\Kk$ parametrizes both of the families of rules in each smooth fiber $X_t$, it is obvious that $k=2$.
Let $\overline{\Kk}$ be the normalization of the closure of $\Kk$ as a subscheme of $\text{Chow}(X)$ and $\qq:\overline{\Uu}\to\overline{\Kk}$ be the normalization of the universal family with the evaluation morphism $\ee:\overline{\Uu}\to X$.
Note that $\qq: \overline{\Uu}\to\overline{\Kk}$ is a $\PP^1$-fibration over $\overline{\Kk}$, and from its construction, there exists a rank $2$ vector bundle $V$ on $\overline{\Kk}$ such that $\overline{\Uu}=\PP_{\overline{\Kk}}(V)$.
Indeed, from a fixed embedding $X\to \PP^3\times\PP^1$ with the second projection being a quadric fibration, $V$ is given by the pull-back of the universal bundle over the Grassmannian $\mathrm{Gr}(2,4)$ via the induced morphism $\overline{\Kk}\to\mathrm{Gr}(2,4)\times \PP^1\to\mathrm{Gr}(2,4)$.
Then, by Proposition~\ref{conic_bundle_formula}, the linear class of $\breve\Cc_2$ satisfies
\[
[\breve\Cc_2]
\sim 2\zeta+\Pi^*\ee_*(K_{\overline{\Uu}/\overline{\Kk}}).
\]
We take $V=\qq_*\ee^*h$ so that $\xi=\ee^*h$ for the tautological class $\xi=\Oo_{\PP_{\overline{\Kk}}(V)}(1)$.
Then
\[
\ee_*(K_{\overline{\Uu}/\overline{\Kk}})
=\ee_*(-2\xi+\qq^*c_1(V))
=-4h+\ee_*\qq^*c_1(V).
\]

Let
\[
\ee_*\qq^*c_1(V)=aH+bh
\]
for some $a,\,b\in\ZZ$.
Then
\[
2a
=aH.h^2
=\ee_*\qq^*c_1(V).h^2
=\qq^*c_1(V).\xi^2
=\qq^*c_1(V)(\qq^*c_1(V)\xi-\qq^*c_2(V))
={c_1(V)}^2.
\]
From ${c_1(V)}^2-c_2(V)=\xi^3=2h^3=0$, we have $a=\frac{1}{2}{c_1(V)}^2=\frac{1}{2}c_2(V)$.
We can find $r:=c_2(V)$ as the number of indeterminate points of a rational section of $\qq:\overline{\Uu}\to\overline{\Kk}$ linearly equivalent to $\xi=\ee^*h$.

We fix a general member $h_0\in|h|$.
Then $h_0=\pi^*{L_0}$ for some line $L_0$ on $\PP^2$.
Recall that a smooth fiber $X_t$ is isomorphic to the double cover $\pi_t: X_t\to\PP^2$ branched over a conic $C_t$ and $h_0\cap X_t={\pi_t}^{-1}(L_0)$.
Then $h_0\cap X_t$ is a union of two rules in the fiber $X_t$ if and only if $L_0$ is tangent to $C_t$ on $\PP^2$.
Here, a singular fiber (two-dimensional quadric cone) of $p$ is not affected due to a general choice of $h_0$: a singular fiber is obtained by a double cover $\PP^2$ branched over a union of two lines.
Since we have only finitely many singular fibers of $p$, we can choose a line $h_0$ which is not one of the lines of the branch loci of the singular fibers of $p$.

As the condition of being tangent to $L_0$ corresponds to a hypersurface of degree $2$ in the space of conics, there are only four conics $C_t$ for $t=1,\,2,\,3,\,4$ such that $h_0\cap X_t={\pi_t^{-1}}(L_0)$ is a union of two rules.
Thus the total number of $\overline{q}$-fibers contained in $\xi_0=\ee^*h_0$ on $\overline{\Uu}$ is $r=2\times 4=8$, and so $a=4$.

On the other hand,
\begin{align*}
2b
&=bH.h^2
=\ee_*\qq^*c_1(V).H.h
=\qq^*c_1(V).\xi.\ee^*H
=(\xi^2+\qq^*c_2(V)).\ee^*H
=\xi^2.\ee^*H
=\ee^*(H.h^2)
=2H.h^2
=4
\end{align*}
as $\ell$ does not meet $H$ on $X$ for general $[\ell]\in\overline{\Kk}$ so that $\ee^*H\equiv 0$ in $\text{N}^1(\overline{\Uu}/\overline{\Kk})$.
Thus $b=2$, and we have
\[
[\breve\Cc_2]
\sim 2\zeta+4\Pi^*H-2\Pi^*h.
\]

So $3\zeta$ can be expressed as
\[
2[\breve\Cc_1]+[\breve\Cc_2]=3\zeta+2\Pi^*H
\]
on $\PP(T_X)$ satisfying \eqref{condition}.
Thus by Lemma \ref{repeat}, $\zeta$ is not big.

\subsection{Degree 5}
There are two deformation types of standard conic bundles $\pi:X\to\PP^2$ with $d=5$:\linebreak
$\pi$ corresponds to odd (No. 11), or even theta characteristic (No. 9 in \cite[Table~2]{MM}).

\subsubsection{No. 11}\label{MM:2-11}
If $\pi$ corresponds to odd theta-characteristic, then $X$ is isomorphic to the blow-up $\Bl_\Gamma V_3$ of a smooth cubic threefold $V_3\subseteq \PP^4$ along a line $\Gamma$.
We know that $T_X$ is big only if $T_{V_3}$ is big by Lemma~\ref{bigness_of_blow-up}, but $T_{V_3}$ cannot be big by \cite[Theorem~1.4]{HLS}.
Thus $T_X$ is not big (see Remark~\ref{blow-ups_of_del_pezzo_manifold}).

\subsubsection{No. 9}\label{MM:2-9}
If $\pi$ corresponds to even theta-characteristic, then $X$ is isomorphic to the blow-up $f: X=\Bl_\Gamma\PP^3\to\PP^3$ of the projective space $\PP^3$ along a smooth curve $\Gamma$ of degree $7$ and genus $5$ \cite[Section 5.1]{BB}.
\[\xymatrix @R=2pc{
X=\Bl_\Gamma \PP^3\ar[d]_{\pi=\varphi_{|3H-D|}} \ar[r]^{\ \ \ \ \ f}
& \PP^3 & \\
\PP^2 & &
}\]
We denote by $h=\pi_*\Oo_{\PP^2}(1)$, $H=f^*\Oo_{\PP^3}(1)$, and $D$ the exceptional divisor of $f: X\to \PP^3$.
Then
\[
-K_X\sim 4H-D= H+h,\ \ 
h\sim 3H-D,\ \
D\sim 3H-h.
\]

Let $\breve\Cc$ be the total dual VMRT on $\PP(T_X)$ associated to the family of the strict transforms of secant lines of $\Gamma$ on $\PP^3$.
Then, by Proposition~\ref{space},
\[
[\breve\Cc]
\sim
10\zeta+11\Pi^*H-\Pi^*D
=10\zeta+8\Pi^*H+\Pi^*h,
\]
and $\breve\Cc$ satisfies \eqref{condition}.
Thus by Lemma \ref{repeat}, $\zeta$ is not big on $\PP(T_X)$ (see also Remark~\ref{blow-ups_of_projective_space}).

\subsection{Degree 6}
There are two deformation types of standard conic bundles $\pi:X\to\PP^2$ with $d=6$:\linebreak
No. 6 and 8 in \cite[Table~2]{MM}.

\subsubsection{No. 6}\label{MM:2-6}
We first treat the case where $X$ is isomorphic to a $(2,2)$-divisor on $\PP^2\times \PP^2$ (No. 6.a).
\[\xymatrix @R=2pc @C=2pc {
X \ar[d]_{\pi_1} \ar[r]^{\pi_2} & \PP^2 \\ \PP^2
}
\]
We denote by $h_i=\pi_i^*\Oo_{\PP^2}(1)$ for $i=1,\,2$.
Then $-K_X\sim h_1+h_2$.

Let $\breve\Cc_i$ be the total dual VMRT on $\PP(T_X)$ associated to the family of fibers of the conic bundle $\pi_i: X\to\PP^2$ for $i=1,\,2$.
Then we know from Proposition~\ref{conic_bundle_formula} that
\begin{align*}
[\breve\Cc_1]
&\sim \zeta + \Pi^*(K_X-\pi_1^*K_{\PP^2})
= \zeta + (\Pi^*(-h_1-h_2) - \Pi^*(-3h_1))
= \zeta + 2\Pi^*h_1 - \Pi^*h_2,\\
[\breve\Cc_2]
&\sim \zeta + \Pi^*(K_X-\pi_2^*K_{\PP^2})
= \zeta + (\Pi^*(-h_1-h_2) - \Pi^*(-3h_2))
= \zeta - \Pi^*h_1 + 2\Pi^*h_2.
\end{align*}

So $2\zeta$ can be expressed as
\[
[\breve\Cc_1]+[\breve\Cc_2]=2\zeta+\Pi^*(h_1+h_2)
\]
on $\PP(T_X)$ satisfying \eqref{condition}.
Thus by Lemma \ref{repeat}, $\zeta$ is not big.

We next deal with the case where $X$ is isomorphic to the double cover of a $(1,1)$-divisor $W$ on $\PP^2\times \PP^2$ which is branched over a smooth member $B\in|-K_W|$ (No. 6.b).
\[\xymatrix @R=2pc @C=3.5pc {
& X \ar[dl]_{\pi_1} \ar[dr]^{\pi_2} \ar[d]^f & \\
\PP^2 & W \ar[l] \ar[r]
& \PP^2
}
\]
We denote by $h_i=\pi_i^*\Oo_{\PP^2}(1)$.
Then $-K_X\sim f^*(-K_{\PP^2\times\PP^2}-\frac{1}{2}B)=h_1+h_2$.

Let $\breve\Cc_i$ be the total dual VMRT on $\PP(T_X)$ associated to the family of fibers of the conic bundle $\pi_i: X\to\PP^2$ for $i=1,\,2$.
Then we know from Proposition~\ref{conic_bundle_formula} that
\begin{align*}
[\breve\Cc_1]
&\sim \zeta + \Pi^*(K_X- \pi_1^*K_{\PP^2})
=\zeta + (\Pi^*(-h_1-h_2) - \Pi^*(-3h_1))
=\zeta + 2\Pi^*h_1 - \Pi^*h_2,\\
[\breve\Cc_2]
&\sim \zeta + \Pi^*(K_X- \pi_2^*K_{\PP^2})
=\zeta + (\Pi^*(-h_1-h_2) - \Pi^*(-3h_2))
=\zeta - \Pi^*h_1 + 2\Pi^*h_2.
\end{align*}

So $2\zeta$ can be expressed as
\[
[\breve\Cc_1]+[\breve\Cc_2]=2\zeta+\Pi^*(h_1+h_2)
\]
on $\PP(T_X)$ satisfying \eqref{condition}.
Thus by Lemma \ref{repeat}, $\zeta$ is not big on $\PP(T_X)$.

\subsubsection{No. 8}\label{MM:2-8}
Let $f: X\to V_7$ be the double cover branched over a smooth member $B\in|-K_{V_7}|$ where $V_7$ is the blow-up $g:V_7=\Bl_z\PP^3\to \PP^3$ of the projective space $\PP^3$ at a point $z\in\PP^3$.
Note that $V_7$ is also given by the ruled variety $\phi: V_7=\PP(\Oo_{\PP^2}\oplus\Oo_{\PP^2}(1))\to\PP^2$ over $\PP^2$.
\[\xymatrix @R=2pc @C=3.5pc{
& X \ar[r]^{f\ \ \ \ \ } \ar[rd]_{\pi=\varphi_{|H-D|}} & V_7=\Bl_z\PP^3 \ar[r]^{\ \ \ \ \ g} \ar[d]_\phi
 & \PP^3 \ar@{-->}[ld] \\
& & \PP^2 & &
}
\]
We denote by $H=(g\circ f)^*\Oo_{\PP^3}(1)$, $h=\pi^*\Oo_{\PP^2}(1)$, $D=f^{-1}(D')$ for the exceptional divisor $D'$ of $g: V_7\to \PP^3$, and $B_0=g(B)$ the image of $B$ on $\PP^3$.
Then
\[
-K_X\sim 2H-D=H+h,\ \ 
h\sim H-D,\ \ 
D\sim H-h.
\]

Note that $B_0$ is a quartic surface with one ordinary double point at the center of the blow-up $z\in\PP^3$.
Let $\Kk$ be the family of rational curves on $X$ which parametrizes the irreducible components of preimages $\ell$ of bitangent lines $l$ to $B_0$ on $\PP^3$, and
$\breve\Cc$ be the total dual VMRT on $\PP(T_X)$ associated to $\Kk$.

Let $\overline{\Kk}$ be the normalization of the closure of $\Kk$ as a subscheme of $\text{Chow}(X)$ and $\qq: \overline{\Uu}\to\overline{\Kk}$ be the normalization of the universal family with the evaluation morphism $\ee:\overline{\Uu}\to X$.
Note that $\qq: \overline{\Uu}\to\overline{\Kk}$ is a $\PP^1$-fibration over $\overline{\Kk}$, and from its construction, there exists a rank $2$ vector bundle $V$ on $\overline{\Kk}$ such that $\overline{\Uu}=\PP_{\overline{\Kk}}(V)$.
Indeed, from the morphism $X\to V_7\to \PP^3$, $V$ is given by the pull-back of the universal bundle over the Grassmannian $\mathrm{Gr}(2,4)$ via the induced morphism $\overline{\Uu}\to \mathrm{Gr}(2,4)$.
Then, by Proposition~\ref{conic_bundle_formula}, the linear class of $\breve\Cc$ satisfies
\[
[\breve\Cc]
\sim k\zeta+\Pi^*\ee_*(K_{\overline{\Uu}/\overline{\Kk}})
\]
for $k=\deg (\ee)$.
We take $V=\qq_*\ee^*H$ so that $\xi=\ee^*H$ for the tautological class $\xi=\Oo_{\PP_{\overline{\Kk}}(V)}(1)$.
Then
\[
\overline{e}_*(K_{\overline{\Uu}/\overline{\Kk}})
=\ee_*(-2\xi+\qq^*c_1(V))
=-2kH+\ee_*\qq^*c_1(V).
\]

Let
\[
\ee_*\qq^*c_1(V)=aH+bD
\]
for some $a,\,b\in\ZZ$.
Then
\[
2a
=aH^3
=\ee_*\qq^*c_1(V).H^2
=\qq^*c_1(V).\xi^2
=\qq^*c_1(V)(\qq^*c_1(V)\xi-\qq^*c_2(V))
={c_1(V)}^2.
\]
From ${c_1(V)}^2-c_2(V)=\xi^3=kH^3=2k$, we have $a=\frac{1}{2}{c_1(V)}^2=k+\frac{1}{2}c_2(V)$.
We can calculate $r:=c_2(V)$ as the number of indeterminate points of a rational section of $\qq:\overline{\Uu}\to\overline{\Kk}$ linearly equivalent to $\xi=\ee^*H$.

On the other hand,
\begin{align*}
2b
=bD^3
=\ee_*\qq^*c_1(V).D^2
=\qq^*c_1(V).(\ee^*D)^2
=0
\end{align*}
as a general bitangent line $l$ of $B_0$ does not pass through $z$ on $\PP^3$, its strict transform $\ell$ does not meet $D$ on $X$ for general $[\ell]\in\overline{\Kk}$ so that $\ee^*D\equiv 0$ in $\text{N}^1(\overline{\Uu}/\overline{\Kk})$.
Thus $b=0$.

In this case, $k=\deg(\ee)$ and $r=\#\{\text{$l$ is a bitangent line to $B_0$ on $\PP^3$}\,|\,l\subseteq P\}$ for general $P\in|\Oo_{\PP^3}(1)|$ correspond to
\begin{align*}
k
&=(\text{the number of bitangent lines of $B_4\subseteq\PP^3$ passing through a general point})
=12,
\\
r
&=2\times (\text{the number of bitangent lines of $B_4\cap\PP^2$ contained in a general hyperplane $\PP^2\subseteq \PP^3$})
=56
\end{align*}
(for the first equality, see \cite[Theorem~1.1]{HK}).
So $12\zeta$ can be expressed as
\[
[\breve\Cc]
\sim
12\zeta+\left(\frac{r}{2}-k\right)\Pi^*H
=12\zeta+16\Pi^*H
\]
on $\PP(T_X)$ satisfying \eqref{condition}.
Thus by Lemma \ref{repeat}, $\zeta$ is not big on $\PP(T_X)$.

\subsection{Degree 8}
There is one deformation type of standard conic bundle $\pi:X\to\PP^2$ with $d=8$:\linebreak
No. 2 in \cite[Table~2]{MM}.

\subsubsection{No. 2}\label{MM:2-2}
Let $X$ be the double cover $f: X\to \PP^2\times \PP^1$ branched over a $(4,2)$-divisor $B$ on $\PP^2\times \PP^1$.
\[\xymatrix @R=2pc @C=3.5pc {
& X \ar[dl]_{\pi} \ar[dr]^{p} \ar[d]^f & \\
\PP^2 & \PP^2\times \PP^1 \ar[l]
\ar[r]
& \PP^1
}
\]
We denote by $h=\pi^*\Oo_{\PP^2}(1)$ and $H=p^*\Oo_{\PP^1}(1)$.
Then $-K_X\sim f^*(-K_{\PP^2\times\PP^1}-\frac{1}{2}B)=H+h$.

Note that $p:X\to \PP^1$ is a del Pezzo fibration whose general fiber $X_t$ is isomorphic to the double cover $\pi_t: X_t\to \PP^2$ branched over a smooth plane quartic $B_t$.
Recall that $X_t$ is a del Pezzo surface of degree $2$ which is also isomorphic to the blow-up $\rho_t: X_t\to\PP^2$ of the projective plane $\PP^2$ at $7$ points in general position.
Therefore, we can conclude that $T_X$ is not big by Proposition~\ref{fibration}.

\bigskip

\end{document}